\definecolor{azure(colorwheel)}{rgb}{0.0, 0.5, 1.0}
\definecolor{hanpurple}{rgb}{0.32, 0.09, 0.98}
\definecolor{iris}{rgb}{0.35, 0.31, 0.81}
\definecolor{byzantine}{rgb}{0.74, 0.2, 0.64}
\definecolor{ashgrey}{rgb}{0.7, 0.75, 0.71}
\definecolor{battleshipgrey}{rgb}{0.52, 0.52, 0.51}
\let\reftagform@=\tagform@
\def\tagform@#1{\maketag@@@{(\ignorespaces\textcolor{purple}{#1}\unskip\@@italiccorr)}}
\renewcommand{\eqref}[1]{\textup{\reftagform@{\ref{#1}}}}
\DeclareUrlCommand\ULurl@@{%
  \def\UrlLeft{\uline\bgroup}%
  \def\UrlRight{\egroup}}
\def\ULurl@#1{\hyper@linkurl{\ULurl@@{#1}}{#1}}
\DeclareRobustCommand*\ULurl{\hyper@normalise\ULurl@}
\def\lessim{\ \lower4pt\hbox{$
		\buildrel{\displaystyle <}\over\sim$}\ }
\def\gessim{\ \lower4pt\hbox{$\buildrel{\displaystyle >}
		\over\sim$}\ }
\numberwithin{equation}{section}
\newcommand{\R}{\mathbb{R}}
\newcommand{\E}{\mathbb{E}}
\newcommand{\Prob}{\mathbb{P}}
\newcommand{\Z}{\mathbb{Z}}
\newcommand{\supp}{\mathrm{supp} \hspace{2pt}}
\newcommand{\tendsto}[2]{\xrightarrow[#1 \to #2]{}}
\newcommand{\ind}{\mathbbm{1}}
\newcommand{\diam}{\mathrm{diam}}
\newcommand{\Aut}{\mathrm{Aut}}
\newtheorem{lemma}{Lemma}[section]
\newtheorem{defn}{Definition}
\newtheorem{thm}{Theorem}[section]
\newtheorem{rmk}{Remark}[section]
\newtheorem{prop}{Proposition}[section]
\newtheorem{cor}{Corollary}[section]
\newtheorem{claim}{Claim}
\begin{document}
\author{Christian Gorski \thanks{Department of Mathematics, Northwestern University, Email: christiangorski2022@u.northwestern.edu, research partially supported by NSF Grant DMS-1502632 RTG: Analysis on Manifolds.}
   \\
	\small{Northwestern University}}

\title{Strict monotonicity for first passage percolation on graphs of polynomial growth and quasi-trees}
\maketitle
\begin{abstract}
   In \cite{vdBK} van den Berg and Kesten prove a strict monotonicity theorem for first passage percolation on $\Z^d$, $d \ge 2$:
   given two probability measures $\nu$ and $\tilde{\nu}$ with finite mean, if $\tilde{\nu}$ is strictly more variable
   than $\nu$ and $\nu$ is subcritical in an appropriate sense, the time constant associated to $\tilde{\nu}$ is strictly smaller than
   the time constant associated to $\nu$.
   In this paper, an analogous result is proven for (not necessarily almost-transitive) graphs of strict polynomial growth and for bounded degree graphs
   quasi-isometric to trees which satisfy a certain geometric condition we call ``admitting detours.''
   It is also proven that if a bounded degree graph does not admit detours, then such a strict monotonicity theorem with respect to variability cannot hold.
   Large classes of graphs are shown to admit detours, and we conclude that for example any Cayley graph of a virtually nilpotent
   group which is not isomorphic to the standard Cayley graph of $\Z$ satisfies strict monotonicity with respect to variability, as does any Cayley graph
   of $F \rtimes F_k$, $F$ a nontrivial finite group and $F_k$ a free group.
   
   Moreover, it is proven that for graphs of strict polynomial growth and bounded degree graphs
   quasi-isometric to trees, if the weight measure is subcritical in an appropriate sense, then it is ``absolutely continuous with respect to
   the expected empirical measure of the geodesic.'' This implies
   a strict monotonicity theorem with respect to stochastic domination of measures, whether or not
   the graph admits detours.
   \end{abstract}

\section{Introduction}
Given an infinite graph $G$ and a probability measure $\nu$ supported on $[0,\infty)$, we can create a random metric $T$
on the vertex set $V$ of $G$ in a natural way; the ``length'' or ``weight'' of each edge is an independent random variable sampled
according to $\nu$, and the distance between two vertices is simply the total ``length'' of the ``shortest'' edge path between them.
This model is called first passage percolation and was introduced by Hammersley and Welsh \cite{HW} as a
model for the spread of fluid through a porous medium. These distances are thought of as the time it takes for fluid
to flow from one point to another, and $T(x,y)$ is often called the ``passage time''. For a survey on this model,
the reader is directed to \cite{Aspects, ADH}.

One is interested in the large scale geometry of the random metric $T$; 
for instance, if $G=(V,E)$ is a Cayley graph of a virtually nilpotent
group, then under mild assumptions the sequence of random metric spaces $(V, \frac{1}{t} T)$ almost surely converges
to a deterministic limit space $(G_{\infty}, d_{\infty})$ (\cite{BenjaminiTessera},
\cite{CD, Aspects} for the standard Cayley graph of $\Z^d$) as $t \to \infty$.
In the case that the group is $\Z^d$,
the limit space is $\R^d$ with some norm. 
The norm is determined by the ``time constants''
\[
   \mu_v := \lim_{n \to \infty} \frac{ T(0,nv) }{ n } = \lim_{n \to \infty} \frac{ \E T(0,nv) }{n},
\]
where $v$ ranges over unit vectors in $\R^d$.
Due to the second equality above, we see that the scaling limit only depends on $T$ through $\E T$,
and in fact throughout this paper we focus on $\E T$ rather than $T$.

The metric $\E T$ does depend on the weight distribution $\nu$, and understanding this dependence is the subject of this paper.
Van den Berg and Kesten \cite{vdBK} showed that if a probability measure $\tilde{\nu}$ is ``strictly more variable'' than
a probability measure $\nu$, both have finite mean, and $\nu$ is subcritical (in a sense to be described later), then for $G$ the standard Cayley graph of $\Z^d$, $d \ge 2$,
we have a strict inequality of time constants $\tilde{\mu}_v < \mu_v$ for all $v \ne 0$.
In fact, their proof shows that
\[
   \liminf_{d(x,y) \to \infty} \frac{ \E T(x,y) - \E \tilde{T}(x,y) }{d(x,y)} > 0,
\]
where $d(x,y) := \sum_{i=1}^d |x_i - y_i|$ is the graph distance between the vertices $x,y \in \Z^d$ in the standard Cayley graph of $\Z^d$.
Note that this inequality makes sense for any graph; one does not require the existence of any scaling limits or
even ``time constants.'' We will often abbreviate the above inequality as $\E \tilde{T} \ll \E T$.
This naturally raises the question: for which other graphs $G$ does the same conclusion hold?
Let us say that such graphs have the \emph{van den Berg-Kesten (vdBK) property}. 
(For precise definitions of ``more variable'', ``subcritical'', and the vdBK property, see Section \ref{sec:def}). 
A main result of this paper is a generalization of van den Berg-Kesten's result:

\begin{thm} \label{thm:nilpotentvdBK}
   Let $G$ be any Cayley graph of a finitely generated virtually nilpotent group.
   If $G$ is not isomorphic as a graph to the standard Cayley graph of $\Z$, then $G$ has the vdBK property.
\end{thm}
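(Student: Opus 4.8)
The plan is to deduce Theorem~\ref{thm:nilpotentvdBK} from the two general vdBK theorems to be established in the body of the paper --- the one for graphs of strict polynomial growth that admit detours, and the one for bounded-degree graphs quasi-isometric to trees that admit detours --- via a dichotomy on the growth rate. Let $G$ be a Cayley graph of a virtually nilpotent group $\Gamma$; we may assume $\Gamma$ infinite, the statement being vacuous otherwise. By the classical Bass--Guivarc'h and Wolf estimates, $\Gamma$ has strict polynomial growth of some integer degree $d\ge 1$, that is, $c\,r^{d}\le |B_G(x,r)|\le C\,r^{d}$ for all $x$ and all $r\ge 1$; moreover $d=1$ exactly when $\Gamma$ is virtually $\Z$, equivalently when $G$ has two ends, and $d\ge 2$ forces $G$ to be one-ended. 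Cayley graphs have bounded degree, so when $d\ge 2$ the graph $G$ satisfies the hypotheses of the polynomial-growth theorem, and when $d=1$ it is quasi-isometric to $\Z$, hence a bounded-degree quasi-tree, satisfying the hypotheses of the quasi-tree theorem. It therefore suffices to show that $G$ admits detours in the sense of Section~\ref{sec:def}.

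The crux is a purely geometric claim about vertex-transitive graphs: unless $G$ is isomorphic to the bi-infinite line, $G$ has no cut edge. Suppose $e=\{x,y\}$ is a cut edge, with $G\setminus e=A\sqcup B$ and $x\in A$. Translating $e$ by the group element carrying $x$ to $y$ is a graph automorphism, and it either interchanges the two sides of $e$ --- impossible, as one of $A,B$ would be finite and the other infinite --- or maps the finite side onto a proper superset of itself of the same cardinality, also impossible; hence $A$ and $B$ are both infinite and $G$ has at least two ends, contradicting $d\ge 2$. When $d=1$ and $G$ is not the line, $G$ is regular of degree $\ge 3$, and I would rule out a cut edge by examining the block--cut tree of $G$: it is a bi-infinite line (a tree with two ends), and vertex-transitivity then forces every block to have only two boundary vertices, hence at most two vertices, hence degree at most two, a contradiction. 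Thus $G$ has no cut edge except in the excluded case; since $G$ is vertex-transitive it has finitely many orbits of edges, so there is a uniform bound on the length of the shortest cycle through any edge, and every edge admits a bounded-length detour.

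It remains to upgrade this to ``admits detours'', which also requires rerouting around balls $B_G(v,r)$ at every scale by paths of length $O(r)$ missing the ball. For $d\ge 2$ one uses the ``room'' of a one-ended polynomial-growth graph: spheres of radius $r$ about $v$ have $O(r^{d-1})$ vertices while balls have $\asymp r^{d}$, so one-endedness together with the lower volume bound yields a detour of length $O(r)$ around $B_G(v,r)$, uniformly in $v$ by transitivity. For $d=1$ with $G$ not the line, $G$ is a quasi-line of bounded ``width'': writing it as the finitely many cosets of a finite-index normal $\Z\le\Gamma$, each coset a bi-infinite track, glued along crossing edges that recur with bounded gaps along every track (by transitivity), one detours around $B_G(v,r)$ by stepping onto a neighbouring track, again in length $O(r)$. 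In both cases $G$ admits detours, so $G$ has the vdBK property by the relevant general theorem; as the standard Cayley graph of $\Z$ is excluded by hypothesis, Theorem~\ref{thm:nilpotentvdBK} follows.

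The main obstacle I foresee is matching these geometric facts to the precise formulation of ``admitting detours'' in Section~\ref{sec:def} --- in particular to the exact quantitative, scale-uniform form of detours that the van den Berg--Kesten coupling argument consumes --- and, in the borderline polynomial-growth case $d=2$, to confirming that the merely quadratic-versus-linear gap between balls and spheres genuinely produces detours of \emph{linear} length around arbitrary balls. If the paper's auxiliary lemmas establishing ``admitting detours'' are stated for (almost-)transitive graphs, or directly for graphs of strict polynomial growth of degree $\ge 2$ and for bounded-degree quasi-lines other than the line, then this last step is already subsumed and the proof of Theorem~\ref{thm:nilpotentvdBK} is exactly the short dichotomy above.
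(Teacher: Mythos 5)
Your high-level plan — reduce to the general vdBK theorems by checking that $G$ has the right large-scale geometry and then proving that $G$ admits detours — matches the paper's structure. The dichotomy on the growth degree $d$ is unnecessary: the paper applies Theorem~\ref{thm:polygrowthvdBK} uniformly, since ``strict polynomial growth'' covers $d=1$ as well, so you never need the quasi-tree theorem here. That is harmless, but the crucial step is proving that $G$ admits detours, and your argument there has a genuine gap.

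You reduce ``admits detours'' to ``$G$ has no cut edge,'' observe that transitivity then gives a uniform bound on the girth through any edge, and conclude that every edge admits a bounded-length detour. But this conclusion is far weaker than ``admits detours'' as defined in Section~\ref{sec:detours}: there, for \emph{every} $\epsilon>0$ one must produce, for all sufficiently long self-avoiding paths $\pi$, a path $\pi'$ with the same endpoints satisfying $|\pi'\setminus\pi|\le(1+\epsilon)|\pi\setminus\pi'|$, equivalently $|\pi'|-|\pi|\le\epsilon\,|\pi\setminus\pi'|$. As $\epsilon\to 0$ this demands a detour that misses a large (in fact linear) portion of $\pi$ while adding only a vanishingly small number of extra edges relative to what it misses. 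A bounded cycle through one edge gives $|\pi\setminus\pi'|=1$ and $|\pi'\setminus\pi|\ge 2$, which is only a $1$-detour, never an $\epsilon$-detour for small $\epsilon$. The paper's own counterexample — the Cayley graph of $F(a,b)$ with generating set $\{a,b,ab\}$, pictured in Figure~\ref{fig:F2Cayley} — is a transitive graph with no cut edges (every edge lies on a triangle) that does \emph{not} admit detours and is not vdBK. So ``no cut edge'' does not imply ``admits detours,'' even for transitive graphs; the virtually-nilpotent hypothesis must enter at a finer level than your argument uses.

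Your proposed upgrade via ``rerouting around balls $B_G(v,r)$ by paths of length $O(r)$'' has the same defect: a geodesic segment entering and exiting $B(v,r)$ has length at most $2r$, so a rerouting of length $O(r)$ that misses it is only an $O(1)$-detour, not an $\epsilon$-detour. What the definition really requires — and what the paper's Appendix~\ref{app:grouptheory} supplies — is a detour whose extra length is $O(1)$ while the number of edges it misses grows linearly in $|\pi|$. The paper obtains this by algebra, not volume: for a unique geodesic word $\pi$ with $\rho(\pi)\in H$ and a central (or almost-central) element $z\in H$ with geodesic representative $w$, Lemma~\ref{lem:grouptodetour} shows that either $\pi$ is an initial segment of $w^N$ (handled separately) or the conjugate path $\pi':=w\,\pi\,(w^{\rho(\pi)})^{-1}$ shares only $O(|w|)$ vertices with $\pi$ in $H$; this adds exactly $2|w|=O(1)$ edges while missing $\Omega(|\pi|)$ edges. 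That translate-by-a-central-element construction is the content of Theorem~\ref{thm:nilpotentdetours} (via Propositions~\ref{prop:finitenormalsubgroup}, \ref{prop:withcenter}, \ref{prop:Zdetour}, \ref{prop:dihedraldetour}), and it is the ingredient your argument is missing.
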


One might wonder if \emph{all} graphs have the vdBK property. This is not true; the easiest counterexample is when
the graph $G$ is a tree. In this case, since there is only one self-avoiding path between any two points, we have
that $\frac{\E T(x,y) }{ d(x,y) }$ is a \emph{constant} equal to the mean of $\nu$.
It is easy to produce two different probability measures $\nu, \tilde{\nu}$ with the same mean such that $\tilde{\nu}$
is more variable than $\nu$ (see the proof of Theorem \ref{thm:notvdBK}).
This is, of course, why the standard Cayley graph of $\Z$ had to be excluded from the above theorem.

However, trees are not the only counterexample. Consider the Cayley graph of the free group $F(a,b)$ on the
two letters $a,b$ which is associated to the [redundant] generating set $\{a, b, ab\}$ (see Figure \ref{fig:F2Cayley}).
It is not hard to see that, although there is more than one self-avoiding path between any two points, each self-avoiding
path between two points must pass through every vertex of the edge-geodesic path between those two points,
and that each step, one only has the choice to travel along the edge lying in the geodesic, or to take a particular path of length
two. Hence, in this case $\frac{ \E T(x,y) }{ d(x,y) }$ is a constant given by $\E \min( w_1, w_2 + w_3 )$, where $w_1,w_2,w_3$
are independent variables with distribution $\nu$. One can again produce two distinct distributions, one more variable than
the other, such that their ``time constants'' are equal, contradicting the vdBK property.

\begin{figure}[t]
   \centering
   \includegraphics[scale=.45]{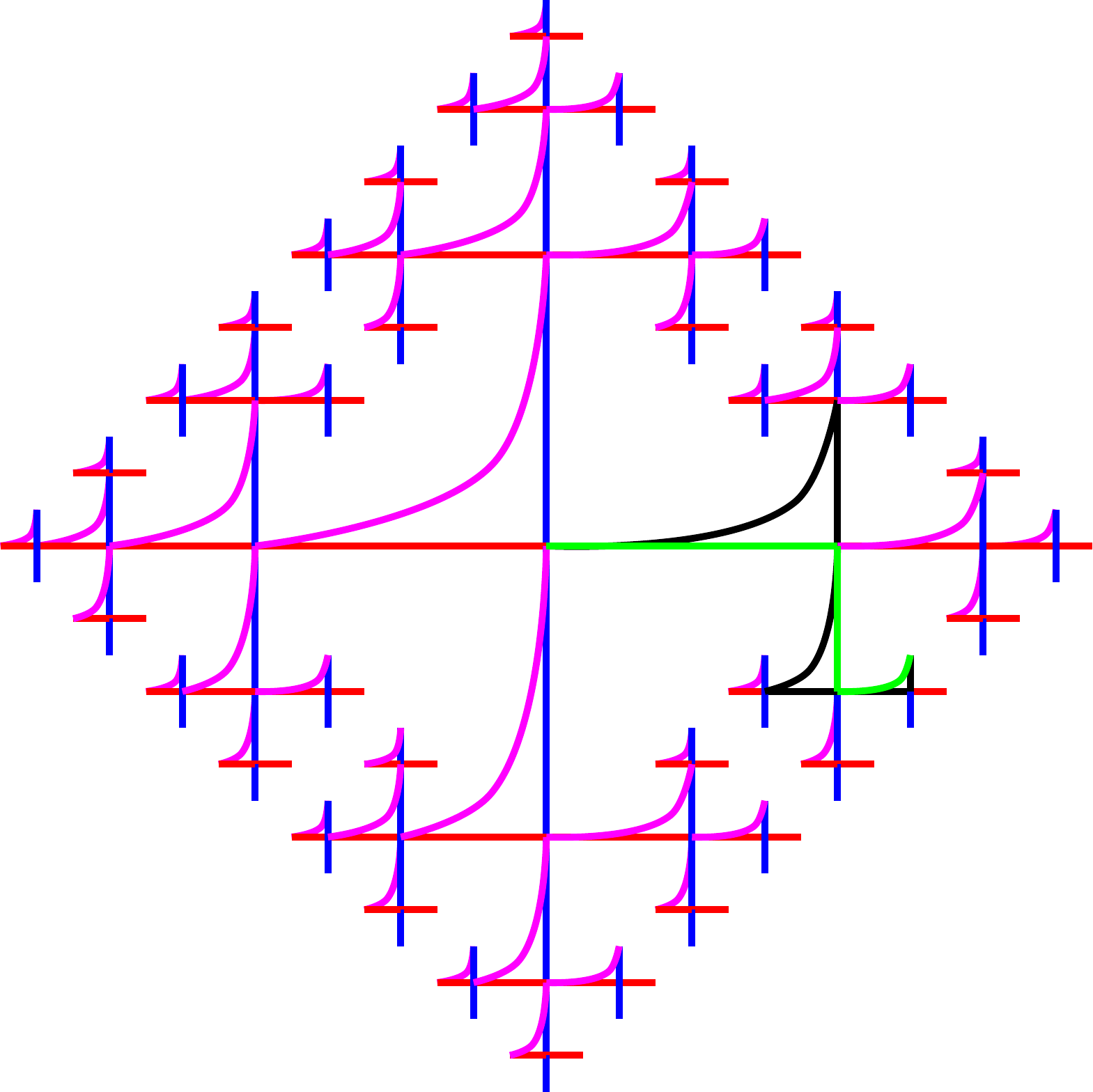}
   \caption{The Cayley graph of the free group $F(a,b)$ with respect to the generating set
   \{a,b,ab\}. In green is the unique edge-geodesic path from 1 to $ab^{-1}ab$. Every self-avoiding 
   path from 1 to $ab^{-1}ab$ in this graph must visit all the vertices of the green
   path and can only use green or black edges.}
   \label{fig:F2Cayley}
\end{figure}

It turns out that the crucial property for determining whether a graph is vdBK is a property of the graph which we
call ``admitting detours'' (defined in Section \ref{sec:detours}). If a bounded degree graph does not admit detours, then it is not vdBK
(Theorem \ref{thm:notvdBK} below). On the other hand, our main theorems prove that given one of two quite different ``large scale''
assumptions on the geometry of $G$, admitting detours \emph{implies} the vdBK property:

\begin{restatable}{thm}{polygrowthvdBK} \label{thm:polygrowthvdBK}
   Let $G$ be a graph of strict polynomial growth. Then $G$ is vdBK if and only if $G$ admits detours.
\end{restatable}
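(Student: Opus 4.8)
The ``only if'' direction is immediate from results already in hand: a graph of strict polynomial growth has $|B(x,1)|$ uniformly bounded and hence has bounded degree, so Theorem~\ref{thm:notvdBK} applies and shows that a graph not admitting detours cannot be vdBK. All of the work is in the converse, so from now on assume $G$ has strict polynomial growth and admits detours; the goal is to prove that whenever $\nu$ is subcritical and $\tilde\nu$ is strictly more variable than $\nu$, one has $\liminf_{d(x,y)\to\infty}\big(\E T(x,y)-\E\tilde T(x,y)\big)/d(x,y)>0$. The plan is to run the ``editing'' strategy of van den Berg and Kesten \cite{vdBK}, substituting for their use of the $\Z^d$ lattice three ingredients: subcriticality of $\nu$ (to control the $T$-geodesic), the detour hypothesis (to perform bounded local surgeries), and polynomial growth (to make the estimates uniform in the endpoints).

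The first step is a per-edge coupling. Since $\tilde\nu$ is more variable than $\nu$ there is an edgewise-i.i.d.\ coupling $\{(W_e,\tilde W_e)\}_{e\in E}$ with $W_e\sim\nu$, $\tilde W_e\sim\tilde\nu$ and $\E[\tilde W_e\mid W_e]\le W_e$; strict variability provides in addition a window $A$ with $\nu(A)>0$ on which the conditional law of $\tilde W_e$ given $W_e$ is genuinely spread out, with a quantitative margin governed by constants $\epsilon_0,q_0>0$. Let $n=d(x,y)$ and let $\gamma$ be the $T$-geodesic from $x$ to $y$ for the weights $W$. The backbone estimate is obtained by editing $\gamma$ into a walk $\pi$ from $x$ to $y$: at certain edges $e\in\gamma$ one splices in a bounded-length detour $P_e$ (which exists by the detour hypothesis), the splice being triggered by a favourable local event for the $\tilde W$-weights on $\{e\}\cup P_e$. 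Since $\tilde T(x,y)\le\ell_{\tilde W}(\pi)$ and, for every edge of $\gamma$ that is left untouched, $\E[\tilde W_e\mid(W_{e'})_{e'}]\le W_e$, taking conditional expectation given the whole $W$-field gives $\E[\tilde T(x,y)\mid(W_{e'})_{e'}]\le T(x,y)-\epsilon_0 q_0\,N$, where $N$ counts the edges of $\gamma$ with weight in $A$ at which the local surgery has positive conditional expected gain. Averaging yields $\E\tilde T(x,y)\le\E T(x,y)-\epsilon_0 q_0\,\E N$, so the whole problem reduces to the uniform lower bound $\E N\ge c\,n$.

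For the lower bound on $\E N$ I would combine two geometric facts. First, the absolute continuity of $\nu$ with respect to the expected empirical measure of the $T$-geodesic --- one of the other results of this paper for graphs of strict polynomial growth, proved from subcriticality of $\nu$ --- guarantees that $\gamma$ uses, in expectation, at least $c\,n$ edges whose weight lies in any fixed window $A$ with $\nu(A)>0$; thus the geodesic cannot dodge the weight range on which $\nu$ and $\tilde\nu$ differ, which is what makes the editing yield a \emph{linear} gain rather than a sublinear one. Second, subcriticality forces $\gamma$ to have only $O(n\log n)$ edges and hence to lie in a ball about $x$ of radius $O(n\log n)$, which by polynomial growth contains only polynomially many vertices; this confinement is what lets one (i) extract along $\gamma$ a linear number of the $A$-edges at which a detour $P_e$ is available in the ``undisturbed'' form required by the local event, and (ii) run the union bounds that make the constant $c$ independent of $x$ and $y$. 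Together these give $\E N\ge c\,n$, completing the proof.

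The step I expect to be the main obstacle is decoupling the local surgery from the global geodesic: the event ``$e\in\gamma$ and $W_e\in A$'' depends on the whole $W$-field, in particular on weights near $e$, while the gain is read off the $\tilde W$-weights on $\{e\}\cup P_e$, so one must reveal the randomness in a carefully chosen order and choose the $A$-edges of $\gamma$ at which one operates with care --- this is exactly where the van den Berg--Kesten argument is most intricate, and where the polynomial-growth confinement must do real work. A closely related point, and arguably the technical heart, is that a geodesic is biased toward small edge-weights, so one must check that confining the surgery to a fixed window $A$ still yields a linear gain; this is precisely the role of the absolute-continuity statement, and one must verify that it meshes with the detour surgery in the strong form needed for strict monotonicity with respect to \emph{variability}, and not only with the weaker hypothesis (stochastic domination) used for the paper's other monotonicity result.
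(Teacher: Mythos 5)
The ``only if'' direction is fine (and matches the paper). But the ``if'' direction has a genuine gap at the step where you go from ``$\E N\ge cn$'' to the conclusion, or rather at the step where you claim $\E N\ge cn$ follows from the absolute-continuity result. Your $N$ must count not merely edges $e\in\gamma$ with $W_e\in A$, but edges at which a detour splice has a guaranteed expected gain; and for the spliced-in detour $P_e$ (of length up to $C(1+\epsilon)$) to save time, you need control over the $W$-weights of \emph{all} the edges of $P_e$ and of the whole subpath $\alpha\subset\gamma$ being replaced, not just $W_e\in A$. This is exactly the paper's notion of a \emph{feasible pair} $(\alpha,\gamma')$: all edges of $\alpha\cup\gamma'$ must have weights in $I_0$ so that Lemma~\ref{lem:technicallemma} yields the gain $g>0$. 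Absolute continuity (Theorem~\ref{thm:polygrowthabscont}) gives linearly many edges of $\gamma$ with weight in $A$, but does not and cannot give linearly many such \emph{neighborhoods}: the conditioning ``$e\in\gamma$'' is a global event in the $W$-field, and there is no union bound that converts ``$\gamma$ is confined to a polynomial-size ball'' into a lower bound on the conditional probability that the $W$-weights on a detour around $e$ happen to be favorable. Moreover, invoking Theorem~\ref{thm:polygrowthabscont} here is circular: the paper proves it by an \emph{easier} version of the same Peierls-plus-resampling machinery used to prove Theorem~\ref{thm:polygrowthvdBK}, precisely because it is a weaker statement, so it cannot serve as a shortcut around that machinery.

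What actually closes the gap in the paper is a resampling scheme (Proposition~\ref{prop:conditionaltovdBK}), not an edge-by-edge edit of the geodesic. Fix a Voronoi-type decomposition of $V$ at scale $R$, use a Peierls argument (Lemma~\ref{lem:peierls}) to show the geodesic must, with high probability, cross linearly many $R$-balls on which a prescribed local event $A_i^R$ (controlling e.g.\ the minimal passage time across the annulus) holds, and then independently resample \emph{all} the weights inside such a ball to lie in a hand-designed configuration set $E_w$ (different in the bounded- and unbounded-support cases, and split further on whether $y_0=\sup\supp\nu$). The point of $E_w$ is to force the \emph{resampled} geodesic $\pi^*$ into a region of the ball where a designed subpath and its detour both have all weights in $I_0$, producing a feasible pair for $\pi^*$; equality in law of $w$ and $w^*$ then transfers the lower bound back to the original geodesic. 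Your sketch has the coupling and the editing idea in common with the paper, but it is missing the resampling step (the actual decoupling mechanism), the Peierls argument (the actual linear lower bound mechanism), and the geometric constructions that make them fit together, which are the substantive content of the proof.
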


\begin{restatable}{thm}{qitree} \label{thm:qitree}
   Let $G$ be a bounded degree graph which is quasi-isometric to a tree.
   Then $G$ is vdBK  if and only if $G$ admits detours. In fact, if $G$ admits detours, then whenever $\tilde{\nu}$ is strictly more variable than $\nu$, we have
   $\E \tilde{T} \ll \E T$.
\end{restatable}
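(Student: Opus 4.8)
The plan is to reduce the passage-time comparison on $G$ to a comparison on the underlying tree $T$ to which $G$ is quasi-isometric, and then to exploit the admitting-detours hypothesis to manufacture, along any geodesic, a positive density of locations at which a "detour gadget" can be inserted. Concretely, fix a quasi-isometry $f\colon G \to T$ with constants $(L,C)$; because $G$ has bounded degree and $T$ is a tree, $G$ has a tree-like coarse structure: there is a constant $R$ so that removing any ball of radius $R$ from $G$ around a "cut region" disconnects $G$ into pieces corresponding to the components of $T$ minus a vertex. The first step is to make this precise and to show that any $G$-geodesic $\gamma$ from $x$ to $y$ of length $n$ must pass within bounded distance of a linearly-growing (in $n$) sequence of such cut regions $z_1,\dots,z_m$, $m \ge c\, n$, which are moreover $\Delta$-separated along $\gamma$ for a fixed $\Delta$. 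This is the "quasi-tree geometry" input and it is essentially combinatorial.

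The second step is the probabilistic core, adapted from van den Berg--Kesten. Since $G$ admits detours, at each cut region $z_i$ there is a detour: a pair of internally disjoint paths between two vertices near $z_i$, of comparable (bounded) length, along which we may compare the weight configuration restricted to the "geodesic side" versus an alternative. Using that $\tilde\nu$ is strictly more variable than $\nu$ and that $\nu$ is subcritical, a single-gadget lemma (the local improvement estimate, exactly as in \cite{vdBK} — one builds a coupling on the bounded gadget and shows that in a configuration with positive probability, switching from $\nu$ to $\tilde\nu$ strictly shortens the local traversal by a fixed amount $\delta>0$, while it never lengthens it on average) gives: conditionally on the environment outside a bounded neighborhood of $z_i$, the expected passage time through that neighborhood decreases by at least $\delta \cdot p$ for a fixed $p>0$. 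Because the cut regions are $\Delta$-separated, the gadgets at distinct $z_i$ are supported on disjoint edge sets, so these improvements are independent and additive. Summing over $i=1,\dots,m$ and using $m \ge c\,n = c\, d(x,y)$ yields
\[
   \E T(x,y) - \E \tilde T(x,y) \;\ge\; c\,\delta\,p \cdot d(x,y) - O(1),
\]
which is exactly the statement $\E \tilde T \ll \E T$, hence the vdBK property. The converse direction, that not admitting detours forbids vdBK, is Theorem \ref{thm:notvdBK}, which I would simply cite.

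The main obstacle is the second step's interface with geodesics: in first passage percolation the relevant "geodesic" is the $T$-geodesic (weighted), which is random and need not stay near the $G$-graph-geodesic, so one cannot naively say "the optimal path passes near $z_1,\dots,z_m$." The standard van den Berg--Kesten resolution, which I would follow, is to avoid conditioning on the random geodesic altogether: instead one shows that \emph{every} path from $x$ to $y$ — in particular the weighted-optimal one — must cross each cut region $z_i$ (this is where the tree-like cut structure is used: the $z_i$ are genuine coarse cut sets, so there is no way around them), and then one argues that resampling the gadget weights can only decrease $\E T$, while with the stated positive probability it decreases the contribution at $z_i$ by $\delta$ regardless of which path is optimal. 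Turning "coarse cut set" into "every path genuinely separates" requires care — one must take $R$ large enough relative to the quasi-isometry constants and the gadget size, and handle the bounded overlap of gadgets at nearby cut regions by thinning the $z_i$ to a $\Delta$-separated subsequence with $\Delta$ chosen after the gadget size is fixed. Once the bookkeeping of constants is arranged so that the gadgets are edge-disjoint and each genuinely straddles a cut, the summation is routine and the linear lower bound follows.
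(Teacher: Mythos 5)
Your proposal is in spirit the same as the paper's: exploit the bottleneck structure of a quasi-tree to force every path (hence the weighted geodesic) through linearly many separated windows, build a detour gadget in each window using the admits-detours hypothesis, and invoke the van den Berg--Kesten local-improvement machinery to convert linearly many feasible detour opportunities into a linear gap between $\E T$ and $\E\tilde T$. The paper gets the cut structure directly by citing Manning's bottleneck criterion (there is an $R$ so that for any graph geodesic $[x,y]$ and any $z$ on it, \emph{every} path from $x$ to $y$ meets $E(B(z,R))$), then works within a fixed $I_0$ from Lemma~\ref{lem:technicallemma} and defines the local event simply as ``$w(e)\in I_0$ for all $e\in E(B_v)$'', which has a fixed positive probability; summing over $v$ on the graph geodesic and applying Lemma~\ref{lem:feasibletovdBK} finishes. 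You instead propose to rederive the cut structure from the quasi-isometry and to argue directly about expected local savings. That is workable, but two issues deserve attention.

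First, you invoke subcriticality of $\nu$. This is unnecessary in the quasi-tree case and in fact undercuts the ``In fact'' clause of the theorem, which asserts the conclusion under \emph{no} subcriticality hypothesis on $\nu$. The paper sidesteps the usual role of subcriticality (existence of geodesics and the Peierls-type counting of good boxes) because Manning's criterion gives the linear count of good boxes deterministically, and geodesic existence can be replaced by near-geodesics as noted in the footnote to the feasible-pair definition. Using subcriticality would only yield the first sentence of the theorem (the vdBK equivalence), not the stronger statement.

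Second, the phrase ``conditionally on the environment outside a bounded neighborhood of $z_i$, the expected passage time through that neighborhood decreases by at least $\delta\cdot p$'' is not a well-posed standalone claim: the contribution of a window to $T(x,y)$ is not a function of the weights inside that window alone, since changing $\nu$ to $\tilde\nu$ also changes which edges the geodesic uses. The correct formulation is the one in Lemma~\ref{lem:feasibletogap}: introduce the interpolating weight $\hat w$, compare $T(\pi)$ with $\hat T(\pi')$ where $\pi'$ is obtained from the $T$-geodesic $\pi$ by splicing in advantageous detours, and deduce that the \emph{global} difference $\E T(x,y)-\E\tilde T(x,y)$ is bounded below by a constant times the expected number of windows containing a feasible pair. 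You gesture at this (``exactly as in \cite{vdBK}''), so I take it you know the underlying argument, but the sentence as written is not correct and should be replaced by the feasible-pair / interpolation bound.
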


The first theorem is used to prove Theorem \ref{thm:nilpotentvdBK}: the latter follows from the former once we prove
that all Cayley graphs of virtually nilpotent groups which are not isomorphic to the standard Cayley graph of $\Z$ admit
detours, which is proven in Appendix \ref{app:grouptheory}.
The second theorem can be combined with results proved in Appendix \ref{app:grouptheory} to show the corollary:
\begin{thm} \label{thm:virtfree}
   Let $G$ be a Cayley graph for a group $\Gamma$ which is virtually free, 
   not isomorphic to $\Z$ or $\Z/2 * \Z/2$, and which either contains
   a finite index subgroup with nontrivial center or contains a nontrivial finite normal 
   subgroup. Then $G$ has the van den Berg-Kesten property.
   For example, if $\Gamma$ contains $F_k \times F$ as a finite-index subgroup, where $F_k$ is the free group on $k \ge 1$ letters
   and $F$ is any nontrivial finite group, or if $\Gamma$ is isomorphic to a semidirect product
   $F \rtimes F_k$, then any Cayley graph of $\Gamma$ is vdBK.
\end{thm}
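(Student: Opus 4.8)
The plan is to deduce Theorem~\ref{thm:virtfree} from Theorem~\ref{thm:qitree}. A Cayley graph of a finitely generated group is regular, hence of bounded degree, so the one hypothesis of Theorem~\ref{thm:qitree} left to check is that a Cayley graph $G$ of $\Gamma$ is quasi-isometric to a tree. Since $\Gamma$ is virtually free it contains a free group $F_k$ of finite index, and $F_k$ acts properly and cocompactly by isometries on its standard Cayley graph, a $2k$-regular tree; hence $F_k$ — and therefore $\Gamma$, and therefore $G$ — is quasi-isometric to that tree. So Theorem~\ref{thm:qitree} applies: $G$ is vdBK if and only if $G$ admits detours, and in the ``admits detours'' case $\E\tilde T \ll \E T$ holds automatically whenever $\tilde\nu$ is strictly more variable than $\nu$. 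The whole theorem therefore reduces to the claim that, under the stated algebraic hypotheses on $\Gamma$, every Cayley graph of $\Gamma$ admits detours.

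To prove that claim I would unify the two hypotheses into one input: a nontrivial $z \in \Gamma$ together with a finite-index subgroup $H \le \Gamma$ containing $z$ and centralized by $z$. For the first hypothesis take $z \in Z(H) \setminus \{1\}$; for the second, let $z$ be a nontrivial element of the finite normal subgroup $N$ and take $H = C_\Gamma(z)$, which has finite index because $z^\Gamma \subseteq N$ is finite. Fixing a finite generating set and a word of length $\ell$ representing $z$, the relations expressing that $z$ commutes with the generators of $H$ produce, based at \emph{every} vertex of $G$, short cycles of a uniform ``commutator'' shape, each encoding a choice between two sub-paths of differing graph length. The substance of the argument is to thread these uniformly available cycles along an edge-geodesic between two far-apart vertices so as to build a genuine detour in the sense of Section~\ref{sec:detours}: one re-routes locally at a positive density of positions along the geodesic, and since $G$ is quasi-isometric to a tree (hence hyperbolic) its geodesics are thin and coarsely unique, so the local re-routings do not interfere and assemble into a self-avoiding path that is strictly longer in graph distance while staying within bounded distance of the geodesic. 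The excepted groups $\Z$ and $\Z/2 * \Z/2$ do satisfy the first hypothesis — each contains $\Z$ with finite index — but must be removed by hand: the only commuting element available to them generates a finite-index subgroup whose coarse geometry is already a line, the construction degenerates, and (as witnessed by their standard Cayley graphs, which are bi-infinite paths with unique geodesics, exactly like a tree) these groups are not vdBK; that no Cayley graph of $\Z/2 * \Z/2$ admits detours is itself a direct argument of the kind sketched for $F(a,b)$ in the introduction, and matches the exclusion of the standard Cayley graph of $\Z$ in Theorem~\ref{thm:nilpotentvdBK}.

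Finally I would check the two example families. If $\Gamma \cong F \rtimes F_k$ with $F$ a nontrivial finite group, then $F$ is a nontrivial finite normal subgroup of $\Gamma$, so the second hypothesis holds at once; and $\Gamma$ has nontrivial finite radical, while neither $\Z$ (torsion-free) nor $\Z/2 * \Z/2$ (trivial finite radical) does, so $\Gamma$ is isomorphic to neither. If $\Gamma$ contains $H \cong F_k \times F$ with finite index, then $Z(H) = \Z \times Z(F)$ when $k = 1$ and $Z(H) = Z(F)$ when $k \ge 2$, so whenever $Z(F) \ne \{1\}$ the first hypothesis holds with this $H$; in the residual case ($k \ge 2$, $Z(F) = \{1\}$) one appeals to the structure theory of finitely generated virtually free groups to extract a nontrivial finite normal subgroup of $\Gamma$ from the $F$-part of $H$, and $\Gamma \not\cong \Z, \Z/2 * \Z/2$ because neither of those groups contains a copy of $F_k \times F$ with $F$ nontrivial. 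I expect the main obstacle to be not these reductions but the two pieces of real work behind them: (i) verifying, from the definition in Section~\ref{sec:detours}, that a central or finite-normal element genuinely yields detours in \emph{every} Cayley graph — a construction that leans on the tree-like coarse geometry of geodesics — and (ii) the group-theoretic bookkeeping needed to produce such an element from the hypotheses and to dispose of the two excepted groups; both are carried out in Appendix~\ref{app:grouptheory}.
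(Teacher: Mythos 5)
Your proposal follows the paper's own route: reduce via Milnor--\v{S}varc and Theorem \ref{thm:qitree} to the statement that every Cayley graph of $\Gamma$ admits detours, and then get that from the group-theoretic propositions of Appendix \ref{app:grouptheory}; at that level the argument is correct, and your unification of the two hypotheses (a nontrivial finite normal subgroup gives an element with finite conjugacy class, whose centralizer is a finite-index subgroup with nontrivial center, so the exclusion of $\Z$ and $\Z/2 * \Z/2$ lets Proposition \ref{prop:withcenter} absorb Proposition \ref{prop:finitenormalsubgroup}) is legitimate and is essentially the remark following Lemma \ref{lem:grouptodetour}. Two caveats. First, the mechanism you sketch for producing detours (threading commutator cycles at positive density along a geodesic and invoking hyperbolicity/thinness) is not what the appendix does and would not go through as described: the appendix constructions are purely word-combinatorial, use no tree-like geometry at all (they also serve the nilpotent case), and the delicate point is precisely the degenerate case in which the unique geodesic is itself an initial segment of a power of (a conjugate of) the word representing the central element $z$ --- there the translate-by-$z$ idea collapses and the paper needs a separate argument using non-cyclicity of $H$ plus the special treatment of virtually cyclic groups; your sketch does not see this case, though since you ultimately cite the appendix this is a mischaracterization rather than a load-bearing gap. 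Second, in the example $F_k\times F$ with $k\ge 2$ and $Z(F)=\{1\}$, the appeal to ``structure theory of virtually free groups'' is both vague and unnecessary: for $1\ne z\in F$ the subgroup $F_k\times\langle z\rangle$ (or $C_H(z)$) has finite index in $H$, hence in $\Gamma$, and has nontrivial center, so the first hypothesis holds directly; if one insists on a finite normal subgroup of $\Gamma$, the correct tool is Dietzmann's lemma applied to the finite conjugacy class $z^\Gamma$, not structure theory.
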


Finally, as noted in \cite{vdBK}, the question of strict inequalities of time constants is related to
``absolute continuity with respect to the expected empirical measure.'' What precisely we mean by this is explained in Section \ref{sec:abscont};
note that this condition does not imply \emph{existence} of a limiting expected empirical measure. In any case, the methods of our
paper easily prove absolute continuity of the weight distribution with respect to the expected empirical measure under the same ``large-scale'' assumptions (see
the Section \ref{sec:def} for the definition of exponential-subcriticality):

\begin{restatable}{thm}{qitreeabscont} \label{thm:qitreeabscont}
   Let $G$ be a bounded degree graph which is quasi-isometric to a tree. Then for any probability measure $\nu$
   on $[0, \infty)$ with finite mean, $\nu$ is absolutely continuous with respect to
   the expected empirical measure of the associated first passage percolation $T$.
   Moreover, if $\nu$ strictly stochastically dominates a measure $\tilde{\nu}$ with finite mean, then $\E \tilde{T} \ll \E T$.
\end{restatable}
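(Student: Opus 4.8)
The plan is to adapt the ``renormalization by greedy-coupling'' argument that underlies Theorems \ref{thm:qitree} and \ref{thm:polygrowthvdBK} to the stochastic-domination setting, where it becomes considerably cleaner because no ``detour'' geometry is needed. First I would recall the structure of a bounded degree graph $G$ quasi-isometric to a tree: by a theorem of Kapovich--Benakli type (or directly via the quasi-tree coarse-median structure) $G$ admits a coarse geodesic ``bottleneck'' structure, so that any geodesic between far-apart vertices $x,y$ must pass within bounded distance of a linearly growing number of ``cut-like'' regions, each of diameter $O(1)$ and separated by gaps of size comparable to some fixed constant. The point of this is purely to extract, for each pair $x,y$, a collection of $\Theta(d(x,y))$ disjoint edges $e_1, \dots, e_m$ such that \emph{every} self-avoiding path from $x$ to $y$ uses at least one edge in each block around $e_i$ — this is exactly the mechanism that makes $\E T(x,y)/d(x,y)$ behave like a mean and is where quasi-isometry to a tree is used.

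Next, for the first (absolute continuity) claim, I would fix a Borel set $A \subset [0,\infty)$ with $\nu(A) > 0$ and show that the expected empirical measure assigns it positive mass, uniformly. The expected empirical measure of $T$ along the geodesic from $x$ to $y$ is, by definition (Section \ref{sec:abscont}), $\tfrac{1}{d(x,y)}\E\big[\sum_{e \in \pi(x,y)} \ind(w_e \in A)\big]$ where $\pi(x,y)$ is the (random) geodesic; the claim is that $\liminf_{d(x,y)\to\infty}$ of this quantity is positive. Using the disjoint blocks $e_1, \dots, e_m$ above: condition on all edge weights except those in the $i$-th block; with probability bounded below (depending only on $\nu(A)$, the block size, and the degree bound) the weight configuration in that block is such that the geodesic is \emph{forced} to traverse an edge carrying an $A$-weight — this is a finite, local event, so a bounded-degree union bound and the FKG-type monotonicity of geodesics in the weights give a uniform lower bound $c(\nu, A) > 0$ on the expected fraction of geodesic edges with weight in $A$. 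Summing/averaging over $i = 1, \dots, m = \Theta(d(x,y))$ yields the desired $\liminf > 0$, hence $\nu \ll$ expected empirical measure.

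For the second claim, suppose $\nu$ strictly stochastically dominates $\tilde\nu$ with both finite mean; strict domination means there is a coupling $(W, \tilde W)$ with $\tilde W \le W$ almost surely and a Borel set $A$ and $\delta>0$ with $\Prob(\tilde W \le a < W)\ge \delta$ for some $a$ in an interval of positive $\nu$-length — more carefully, I'd extract from strict domination a ``gain event'' of the form: with probability $\ge \delta$ the coupled weight drops by at least $\eta$, i.e. $W - \tilde W \ge \eta$, for fixed $\delta, \eta > 0$. Couple the two FPP processes edge-by-edge via this monotone coupling, so $\tilde T(x,y) \le T(x,y)$ pathwise. Now run the block argument of the previous paragraph on the $\tilde{}$-process: for a positive fraction of the blocks $i$, the $\tilde{}$-geodesic is forced through an edge $e$ on which, with probability $\ge \delta'$, the gain event $w_e - \tilde w_e \ge \eta$ occurs; replacing $w_e$ by $\tilde w_e$ on that single edge lowers the $\tilde{}$-passage time relative to the $\nu$-passage time by at least $\eta$ on that event while, crucially, the $\nu$-geodesic between $x$ and $y$ cannot ``route around'' the loss because it too must cross the block (here again only the quasi-tree bottleneck property is used, not detours). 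Carefully, one compares $\E T(x,y)$ to $\E \tilde T(x,y)$ by a telescoping edge-swap (as in van den Berg--Kesten, Section \ref{sec:def}): $\E T(x,y) - \E \tilde T(x,y) \ge \sum_i \eta\,\delta'\,\Prob(\text{block } i\text{ forces a geodesic edge}) \ge c\, m = c\,\Theta(d(x,y))$, giving $\E \tilde T \ll \E T$.

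The main obstacle I anticipate is making the ``forced edge with a gain'' lower bound genuinely \emph{uniform} in $x,y$ and compatible with the edge-swap accounting: one must ensure the blocks are truly disjoint (so the swaps don't interact), that conditioning on the complement of a block doesn't destroy the $\ge\delta'$ gain probability (it won't, since weights are independent, but the ``geodesic is forced through the block'' event is not independent of the block's weights, so one needs the standard trick of revealing the outside configuration first, then arguing the forcing is monotone and the block weights are still fresh), and that summing the per-block gains does not overcount — here the van den Berg--Kesten device of ordering the edges and swapping one at a time, comparing consecutive hybrid weight environments, handles the bookkeeping. A secondary technical point is that strict stochastic domination must be converted into a fixed $(\delta,\eta)$ gain event; this is a routine measure-theoretic lemma (if $\tilde\nu \ne \nu$ and $\tilde\nu \preceq \nu$ then $\tilde F(t) > F(t)$ on an interval, yielding a coupling with a positive-probability strict drop), and the finite-mean hypothesis is what lets us truncate large weights so the swaps have controlled effect.
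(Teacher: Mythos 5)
Your first claim (absolute continuity) is, in substance, the paper's own argument: Manning's bottleneck criterion supplies, for each vertex $v$ on an edge-geodesic $[x,y]$, a ball $B_v$ of bounded radius that every $x$--$y$ path must cross; on the local event that \emph{every} edge of $E(B_v)$ has weight in $A$ (probability at least $\nu(A)^{|E(B_v)|}$, uniformly positive by the degree bound), the $T$-geodesic is forced to traverse an $A$-weighted edge inside $B_v$; summing over the $\gessim d(x,y)$ vertices of $[x,y]$ and disjointifying the overlapping balls by the greedy-coloring device of Lemma \ref{lem:feasibletovdBK} gives the desired liminf. One caution: your appeal to ``FKG-type monotonicity of geodesics in the weights'' is neither valid in general (the event that the geodesic uses an $A$-weighted edge is not monotone in the weights for an arbitrary Borel set $A$) nor needed --- prescribing the whole block to lie in $A$ makes the forcing deterministic on that event, so you should simply delete that step.

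The second claim is where your route diverges from the paper, and where there is a genuine gap. The paper deduces it from Proposition \ref{prop:stochdommonotonicity}: under a monotone edgewise coupling $\tilde{w}\le w$, evaluate both passage times along the $T$-geodesic $\pi$ (the geodesic of the \emph{dominating} environment), so that $\tilde{T}(x,y)\le \tilde{T}(\pi)$ and hence $\E T(x,y)-\E\tilde{T}(x,y)\ge \E\sum_{e\in\pi}\bigl(w(e)-\tilde{w}(e)\bigr)$; since $\pi$ is $w$-measurable, conditioning on $w$ gives $\E[w(e)-\tilde{w}(e)\mid w]\ge ab\,\ind_{w(e)\in A}$ for suitable $a,b>0$ and a Borel $A$ with $\nu(A)>0$, and the first part finishes the proof --- no blocks, swaps, or additional forcing are required. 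Your plan instead runs the forcing argument on the $\tilde{\ }$-geodesic together with telescoping single-edge swaps, and as written this fails in two ways: (i) working along $\tilde{\pi}$ only yields $T(x,y)-\tilde{T}(x,y)\le T(\tilde{\pi})-\tilde{T}(\tilde{\pi})$, an upper rather than a lower bound; in a telescoping scheme a single swap lowers the passage time by at least $w_e-\tilde{w}_e$ only when the geodesic of the \emph{pre-swap} (hybrid, higher) environment uses $e$, so your per-block gain events would have to be formulated for hybrid environments, not for the $\tilde{\ }$-process; (ii) ``the block forces the geodesic through some edge'' does not identify \emph{which} edge, and the conditional probability of the gain given that the geodesic uses a particular edge is not $\delta'$, because the geodesic's choice inside the block depends on the block's $w$-weights --- revealing the outside configuration first does not cure this. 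Both defects disappear if you argue along the $T$-geodesic and condition on $w$, as the paper does.
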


\begin{restatable}{thm}{polygrowthabscont} \label{thm:polygrowthabscont}
   Let $G$ be a graph of strict polynomial growth. Suppose that $\nu$ has finite mean 
   and is exponential-subcritical. 
   Then $\nu$ is absolutely continuous with respect to
   the expected empirical measure of the associated first passage percolation $T$.
   Moreover, if $\nu$ strictly stochastically dominates a measure $\tilde{\nu}$ with finite mean, then $\E \tilde{T} \ll \E T$.
\end{restatable}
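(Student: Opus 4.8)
The plan is to reduce the whole statement to a single quantitative fact — that the geodesic uses, in expectation, order $d(x,y)$ edges whose weight lies in any prescribed set of positive $\nu$-mass — and then to prove that fact by a renewal-type decomposition of the geodesic in which exponential-subcriticality prevents it from routing around the prescribed weights.

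First I reduce the ``moreover'' clause. Fix a deterministic rule selecting a $T$-geodesic $\gamma_{x,y}$ for each pair $x,y$, and for Borel $A\subseteq[0,\infty)$ write $N_A(x,y):=\#\{e\in\gamma_{x,y}:t_e\in A\}$. If $\nu$ strictly stochastically dominates $\tilde\nu$, the monotone coupling $\tilde t_e=F_{\tilde\nu}^{-1}(U_e)\le F_\nu^{-1}(U_e)=t_e$ (with $(U_e)$ i.i.d.\ uniform) produces a Borel set $B$, $\nu(B)>0$, and a constant $\eta>0$ with $\tilde t_e\le t_e-\eta$ whenever $t_e\in B$; hence $\tilde T(x,y)\le\sum_{e\in\gamma_{x,y}}\tilde t_e\le T(x,y)-\eta\,N_B(x,y)$, so $\E T(x,y)-\E\tilde T(x,y)\ge\eta\,\E N_B(x,y)$, and $\E\tilde T\ll\E T$ follows once $\liminf_{d(x,y)\to\infty}\E N_B(x,y)/d(x,y)>0$ — which, asserted for every $A$ with $\nu(A)>0$, is precisely the absolute-continuity statement (Section~\ref{sec:abscont}). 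So it suffices to prove the \emph{density estimate}: for every closed $A\subseteq(0,\infty)$ with $\nu(A)>0$ there is $c>0$ with $\E N_A(x,y)\ge c\,d(x,y)$ whenever $d(x,y)$ is large. Monotonicity in $A$ lets me assume $A=[a,b]$ with $0<a\le b$.

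For the density estimate I would use two easy inputs — $\E T(x,y)\le d(x,y)\int t\,\nu(dt)$ (finite mean, test the graph geodesic) and $\E|\gamma_{x,y}|\le C_1\,d(x,y)$ (bound the number of self-avoiding paths of a given length $k$ by $\Delta^k$, $\Delta$ the maximal degree, and use exponential-subcriticality to make the weight-sum tail beat this) — and then the main point: exponential-subcriticality (see Section~\ref{sec:def}) is exactly what forces edges of weight below $a$ to be ``subcritical,'' i.e.\ unable to carry a geodesic far, so that connected clusters of such edges have exponentially small size. Revealing the environment, the weight-$<a$ edges of $\gamma_{x,y}$ fall into runs, each run inside one such cluster; as the runs have bounded expected length while $|\gamma_{x,y}|\ge d(x,y)$, the geodesic must use $\gtrsim d(x,y)$ edges of weight $\ge a$ in expectation — the ``bridges'' joining consecutive clusters. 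A fixed fraction of these bridges land in $A=[a,b]$: a bridge exits a bounded-size cluster $C$ of weight-$<a$ edges through its cheapest boundary edge, whose weight is $\ge a$ and, since $C$ is small and the degree is bounded, lies in $[a,b]$ with uniformly positive probability; summing these essentially independent exit events over the $\gtrsim d(x,y)$ clusters visited gives $\E N_A(x,y)\gtrsim d(x,y)$. (An equivalent packaging: using $\gamma_{x,y}$ as a test path for the metric $T_A^{(M)}$ obtained by bumping each $A$-edge up to a level $M>b$ gives $\E N_A(x,y)\ge(M-a)^{-1}\bigl(\E T_A^{(M)}(x,y)-\E T(x,y)\bigr)$, and the renewal picture shows this passage-time gap is $\gtrsim d(x,y)$ for $M$ large, since the $T_A^{(M)}$-geodesic must pay strictly more per bridge than the true geodesic.) Enlarging $[a,b]$ back to $A$ finishes the proof.

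The main obstacle is the renewal decomposition itself: one must show, with only exponential-subcriticality available, that the geodesic — a global minimizer, with no geometric bottleneck funnelling it through anything — is nevertheless forced to leave the weight-$<a$ clusters order $d(x,y)$ times, and that its forced bridge weights are genuinely spread through $\supp\nu$ rather than concentrated above $b$. This is where strict polynomial growth (controlling the path counts behind the geodesic-length bound, and legitimizing the union bound over disjoint exit events) and exponential-subcriticality (which is precisely the statement that sub-threshold weights cannot carry the geodesic) are both indispensable. In the quasi-tree setting of Theorem~\ref{thm:qitreeabscont} the same forcing comes for free from the tree-like structure — every geodesic is literally funnelled through order $d(x,y)$ bounded ``windows'' — so the identical argument applies with ``cluster of weight-$<a$ edges'' replaced by ``window,'' which is why no subcriticality hypothesis is needed there.
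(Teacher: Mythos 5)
Your reduction of the stochastic-domination claim to the density estimate $\E N_A(x,y)\gtrsim d(x,y)$ is correct (it is Proposition~\ref{prop:stochdommonotonicity} in the paper), but the density estimate itself is not established, and the route you sketch has gaps that the hypotheses of the theorem do not close. First, the cluster decomposition requires the subgraph of weight-$<a$ edges to have exponentially small connected components, but exponential-subcriticality constrains $\nu$ only near $\inf\supp\nu$, not at an arbitrary level $a>\inf$: for $a$ well above $\inf$ the mass $\nu([\inf,a))$ may exceed $p_c$ and the clusters can be infinite, and when $\inf>0$ the hypothesis is a \emph{geodesic}-percolation condition ($\nu(\{\inf\})<\vec{\underline{p_c}}$) that does not even speak to ordinary cluster sizes. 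Second, the reduction ``assume $A=[a,b]$'' is incorrect: monotonicity in $A$ lets you pass to a smaller set, so you may assume $A\subseteq[a,b]$, but then the cheapest boundary edge lying in $[a,b]$ does not imply it lies in $A$. Third, and most fundamentally, nothing in the argument forces the geodesic to use the cheapest boundary edge when it exits a cluster: a global minimizer may route through a weight-$>b$ edge whenever that is better downstream, and the claim that a fixed fraction of exits lands in $[a,b]$ (let alone $A$) is never derived. You flag this yourself as ``the main obstacle,'' but the parenthetical about the bumped-up metric $T_A^{(M)}$ only restates the problem rather than solving it.

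The paper's proof avoids this entirely by not trying to control the geodesic directly. It uses the resampling scheme of Section~\ref{sec:polygrowth}: for a fixed ball $B(o_i,\Sigma R)$, it compares the $T$-geodesic $\pi$ for $w$ with the $T^*$-geodesic $\pi^*$ for $w^*$, where $w^*$ is obtained by independently resampling the weights inside the ball. On the event that $\pi$ crosses $B_i$ and the passage-time lower bound $A_1$ holds (made typical by Lemma~\ref{lem:bddawayfrominf}), one can prescribe the resampled weights so that inner edges are cheap while the boundary annulus forces $\pi^*$ to pay an $A$-weight on entry; combined with the Peierls-type Lemma~\ref{lem:peierls} and the equality in law of $w$ and $w^*$, this yields $\E N_A(x,y)\gtrsim d(x,y)$. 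The resampling step is precisely what substitutes for the ``the geodesic is forced through $A$-edges'' claim that your renewal argument cannot deliver.
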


The layout of the paper is as follows: in Section \ref{sec:def} we establish definitions and notations, particularly the definition of the vdBK property.
In Section \ref{sec:general} we collect various lemmata, most of which are essentially proven in \cite{vdBK}, which
we will need to prove our main theorems. The key conclusion of this section is that, in order to prove that $\E \tilde{T} \ll \E T$, it suffices to show 
that the expected number of times the $T$-geodesic between two points $x$ and $y$ passes through a certain type of configuration 
called a ``feasible pair''
is \emph{linear} in $d(x,y)$. In Section \ref{sec:detours} we introduce the concept of ``admitting detours,'' show that this
is a necessary condition for a graph to be vdBK, and then give examples of graphs which admit detours.
In Section \ref{sec:qitrees} we prove Theorem \ref{thm:qitree}, which will follow almost immediately from the results of Section \ref{sec:general} 
combined with a characterization of graphs quasi-isometric to trees.
Because paths are so constrained in this setting, it is not hard to produce local events which imply that the $T$-geodesic from $x$ to $y$
passes through a feasible pair, and this makes the proof quite simple.

In Section \ref{sec:polygrowth} we prove Theorem \ref{thm:polygrowthvdBK}, which is much more involved.
The three key components are a Peierls-type lemma, a resampling argument, and a ``geometric construction'' (a 
construction of a set of weights suitable for use in the resampling argument).
Although this general strategy is the same as in \cite{vdBK}, the methods given here apply to general graphs
of strict polynomial growth which are not necessarily almost-transitive.
The geometric constructions in particular are rather different from those of \cite{vdBK} and are quite involved,
since we are given the task of manipulating the geodesic while remaining largely agnostic to the fine geometry of the graph.
Indeed, these geometric constructions are the most involved part of the proof of Theorem \ref{thm:polygrowthvdBK}.
At the end of this section we give some examples of graphs which are not almost-transitive to which our results apply.

Lastly, in Section \ref{sec:abscont} we prove absolute continuity with respect to the expected empirical measure for graphs of strict polynomial growth
and for graphs quasi-isometric to trees, which implies a strict monotonicity theorem
with respect to stochastic domination, regardless of whether the graph in question
admits detours. The proofs are just easier versions of the proofs of the main theorems of the paper.
Appendix \ref{app:grouptheory} gives proofs of the statements in Section \ref{sec:detours} regarding which Cayley graphs admit detours.

\section{Passage times and the van den Berg-Kesten property} \label{sec:def}

By a \emph{graph} we mean a pair $G = (V,E)$ of sets and an ``endpoint'' or ``boundary'' map from $E$ 
to the set of subsets of $V$ of size 2.
In particular, we allow more than one edge between each pair of vertices but we do not allow self-loops. (Disallowing self-loops
is simply a matter of convenience; virtually all questions considered here are easily seen to be equivalent for a graph $G$ with 
self-loops and the graph $G'$ obtained from $G$ by deleting all self-loops).
Throughout, the ``ambient graph'' $G$ is tacitly assumed to be connected, locally finite (i.e. each vertex has finite degree)
and infinite (that is, $V$ is countably infinite); we will often however consider subgraphs of $G$ which are finite and/or disconnected.

A path $\pi$ in $G$ is an alternating sequence of vertices and edges (starting and ending with a vertex) such that
the vertices immediately preceding and following an edge comprise the edge's boundary.
If $\pi$ starts at $x \in V$ and ends at $y \in V$, we often write $\pi: x \to y$.
We will typically abuse notation and use the same symbol $\pi$ to refer to the set of edges appearing in the path $\pi$ (so $\pi \subset E$).
$|S|$ denotes the cardinality of the set $S$, so in particular, if $\pi$ is a path, $|\pi|$ is the number of edges appearing in the path
(again abusing notation and considering $\pi$ as a subset of $E$). If $\pi$ does not contain any repeated edges,
then this agrees with the usual notion of length of a path. In fact, we will mostly be concerned with paths
which do not have any repeated vertices; we call such paths \emph{self-avoiding} (or \emph{vertex-self-avoiding}).


A graph $G$ gives a natural metric on $V$ by
\[
   d(v,w) := \inf \{ |\gamma| : \gamma: v \to w \} = \inf \{ |\gamma| : \gamma: v \to w \mbox{ self-avoiding} \}.
\]
We write $B(x,R)$ for the ball $\{ y \in V : d(x,y) \le R \}$ in this metric and write $S(x,R)$ for the sphere $\{ y \in V : d(x,y) = R\}$.

More generally, given a function $w:E \to [0,\infty)$ we can define
\[
   T(\pi) := \sum_{e \in \pi} w(e).
\]
(For general $\pi$, the sum should be over the \emph{sequence} of edges given by $\pi$; if $\pi$ contains no repeated edges,
this may again be considered as an edge \emph{set}). We then get a pseudo-metric on $V$ given by
\[
   T(v,w) := \inf \{ T(\gamma) : \gamma: v \to w \} = \inf \{ T(\gamma) : \gamma: v \to w \mbox{ self-avoiding} \}.
\]
We call $w(e)$ the \emph{weight} of the edge $e$.
We often call $T(\pi)$ the \emph{passage time} of the path $\pi$ and we call $T(v,w)$ the \emph{passage time} from $v$ to $w$.
If $\nu$ is a probability measure on $[0,\infty)$ then we get a \emph{random} function $w:E \to [0,\infty)$ 
by taking the $\{ w(e) \}_{e \in E}$ to be an independent family of $\nu$-distributed random variables.
This gives a \emph{random} pseudometric $T$ on $V$. This model is called \emph{[independent] first passage percolation}.
Throughout the paper, $w$ and $T$ will represent the random weights and pseudo-metric given by a probability measure $\nu$.
Similarly, $\tilde{w}$ and $\tilde{T}$ will represent the random weights and pseudo-metric given by a probability measure $\tilde{\nu}$,
and similarly for any other diacritics.


Let $\nu$ and $\tilde{\nu}$ be two probability measures on $[0,\infty)$. We say that $\tilde{\nu}$ is \emph{more variable} than $\nu$
if for every concave nondecreasing function $f: \R \to \R$ we have
\[
   \int f d\tilde{\nu} \le \int f d\nu
\]
as long as both integrals converge absolutely. We say that $\tilde{\nu}$ is \emph{strictly more variable} than $\nu$
if $\tilde{\nu}$ is more variable than $\nu$ and $\tilde{\nu} \ne \nu$.


We now define some percolation thresholds associated to a graph $G$. For $p \in [0,1]$, denote by $G_p$ the random subgraph
of $G$ given by including each edge $e \in E(G)$ in $G_p$ independently with probability $p$, excluding with probability $1-p$.
We define the \emph{exponential percolation threshold} for $G$ to be
\[
   \underline{p_c} := 
   \sup 
   \left\{ p \in [0,1] : \limsup_{R \to \infty} \sup_{o \in V} \frac{1}{R} \log \Prob(G_p \mbox{ contains an edge path from } o \mbox{ to } B_G(o,R)^c) < 0 \right\}
\]
and we define the \emph{exponential geodesic percolation threshold} for $G$ to be
\[
   \vec{\underline{p_c}} :=
   \sup \left\{ p \in [0,1] : \limsup_{R \to \infty} \sup_{o \in V} \frac{1}{R} \log 
   \Prob \left( \begin{array}{c} G_p \mbox{ contains an edge path from } o \mbox{ to } \\
                                              B_G(o,R)^c \mbox{ which is edge-geodesic in } G
                     \end{array} \right) < 0 \right\}.
\]
Below the exponential percolation thresholds, we have uniform exponential upper bounds on connection events.
We call a measure $\nu$ \emph{exponential-subcritical} if either $\inf := \inf \supp \nu = 0$ and $\nu(\{0\}) < \underline{p_c}$
or $\inf > 0$ and $\nu(\{\inf\}) < \vec{\underline{p_c}}$.

\begin{defn}
   We say that an infinite graph $G$ \emph{has the van den Berg-Kesten (vdBK) property}
   if for every $\nu, \tilde{\nu}$ with finite mean such that 
   $\nu$ is exponential-subcritical and $\tilde{\nu}$ is strictly more variable than $\nu$, we have
   \begin{equation} \label{eq:strictineq}
      \liminf_{d(x,y) \to \infty} \frac{ \E T(x,y) - \E \tilde{T}(x,y) }{d(x,y)} > 0.
   \end{equation}
\end{defn}
We will often abbreviate the ``asymptotic strict inequality'' \eqref{eq:strictineq} as $\E \tilde{T} \ll \E T$.
The main theorems of this paper give sufficient or necessary conditions for a graph to be vdBK.

\subsection{Remarks on the condition of exponential subcriticality}
   Here are some remarks which, while not necessary to the proofs below, are
   worth noting, on the condition of exponential subcriticality, its relationship to various other percolation thresholds,
   and the extent to which it is a necessary assumption to get a strict monotonicity result.
   
   First, it is clear from the definitions that for any graph, $\vec{\underline{p_c}} \ge \underline{p_c}$, and a simple union bound (counting self-avoiding paths
   from a fixed vertex) shows that if $G$ has degree at most $D$, then $\underline{p_c} \ge 1/D > 0$.
   It is also clear that for any connected graph, $\underline{p_c} \le p_c$, where $p_c$ is the percolation threshold as usually defined:
   \[
      p_c := \inf \{ p \in [0,1] : \Prob( G_p \mbox{ contains an infinite edge path from } o ) > 0 \}.
   \]
   For almost-transitive graphs, the sharpness of the percolation threshold \cite{DuminilCopinTassion} shown by Duminil-Copin and
   Tassion implies that $\underline{p_c} = p_c$.
   (The proof of sharpness in \cite{DuminilCopinTassion} is stated for transitive graphs, but is not hard to generalize to almost-transitive graphs;
   here by \emph{almost-transitive graph} we mean a graph $G$ such that the action of $\Aut(G)$ on $V$ has finitely many orbits.)
   
   Furthermore, on amenable almost-transitive graphs (in particular graphs of polynomial growth),
   the original argument of Burton-Keane (\cite{BurtonKeane},
   see also \cite{HaggstromJonasson} for an explicitly general proof) shows that $p_c = p_u$, where $p_u$ is the \emph{uniqueness threshold}
   \[
      p_u := \inf \{ p \in [0,1] : \Prob( G_p \mbox{ contains a unique infinite connected component } ) = 1 \}.
   \]
   If $\nu(\{0\}) \ge p_u$, then one expects that $\lim_{d(x,y) \to \infty} \frac{ \E T(x,y) }{ d(x,y) } = 0$ (although this has 
   only been proven in certain cases, see e.g. Theorem 6.1 of \cite{Aspects}). In that case, it is impossible that $\E \tilde{T} \ll \E T$,
   so for polynomial growth almost-transitive graphs, the assumption on the atom at 0 is really as weak as one could hope for.
   
   In the case of the standard Cayley graph of $\Z^d$, $\vec{\underline{p_c}}$ is the classical oriented percolation threshold $\vec{p_c}$; this is because
   of the nature of edge-geodesics in this graph, combined with the sharpness results of Aizenman and Barsky \cite{AizenmanBarsky}.
   In fact, if $G$ is the standard Cayley graph of $\Z^d$, the condition here of being exponential-subcritical is precisely the condition of being
   ``useful'' in \cite{vdBK}.
   Furthermore, in this case, if $\inf \supp \nu := a > 0$ and $\nu(\{a\}) \ge \vec{p_c}$, then $\lim_{n \to \infty} \frac{ \E T(0, (n,...,n)) }{ dn } = a$ \cite{DurrettLiggett, Marchand}.
   So if $\nu \ne \delta_a$, we can
   take $\tilde{\nu} = \delta_a$ to get $\tilde{\nu}$ strictly more variable than $\nu$ but $\E \tilde{T} \nll \E T$.
   Thus, the assumption $\nu(\{a\}) < \vec{\underline{p_c}}$ in the definition of the vdBK property is also necessary at least in this setting.\footnote{
   Of course, this is because $\E \tilde{T} \ll \E T$ is equivalent to a strict inequality of time constants \emph{in all directions simultaneously};
   if one instead only cares about strict inequality of a time constant in a fixed direction, the assumption $\nu(a) < \vec{\underline{p_c}}$ may not be necessary;
   for instance Marchand \cite{Marchand} proved that for $G$ the standard Cayley graph of $\Z^2$, we get strict inequality in the $e_1$ direction
   without that assumption (as long as still $\nu(\{0\}) < p_c$).}
   
   It is reasonable to conjecture that similar behavior happens more generally, i.e. that for, say, almost-transitive polynomial growth graphs
   one has $\liminf_{d(x,y) \to \infty} \frac{ \E T(x,y) }{ d(x,y) } = \inf =: \inf \supp \nu$ whenever $\inf > 0$ and $\nu(\{\inf\}) \ge \vec{\underline{p_c}}$,
   but to show this would require defining an appropriate analogue of the oriented percolation threshold (which will likely not in general be associated
   to a literal oriented percolation model) and showing some sort of sharpness, and this is not explored here.
   
   On the other hand, for graphs quasi-isometric to a tree, we will see in Theorem \ref{thm:qitree} that exponential subcriticality as defined here is not necessary at all.
   In fact, in this setting, if $G$ admits detours (see Section \ref{sec:detours}), then
   $\E \tilde{T} \ll \E T$
    whenever $\tilde{\nu}$ is strictly more variable
   than $\nu$, with no further assumptions needed on either measure.
   This is consistent with the perspective that generally the \emph{uniqueness} threshold, rather than $p_c$, is the correct threshold to consider for the atom at 0,
   since almost-transitive graphs quasi-isometric to trees can have $p_c <1$ but always have $p_u = 1$ (since they have more than one end, see
   page 86 of \cite{HaggstromPeresSchonmann}). The proper ``uniqueness'' analogue of $\vec{\underline{p_c}}$ outside of the amenable
   case is unclear.
   
   Finally, percolation on graphs which are not almost-transitive is poorly understood, and so it is entirely unclear how close exponential
   subcriticality is to the ``right'' condition on $\nu$ to consider in this general setting. However, if $G$ has degree at most $D$ then
   the inequalities $\vec{\underline{p_c}} \ge \underline{p_c} \ge 1/D > 0$ tell us that our main theorems are never vacuous for bounded degree graphs;
   in particular, we get sufficient conditions to conclude strict monotonicity, even if the parameters $\vec{\underline{p_c}}$ and $\underline{p_c}$ are quite mysterious.

\section{Reduction to a lower bound on expected number of traversed ``feasible pairs''}
\label{sec:general}
In this section, we reduce the task of deducing a strict inequality $\E \tilde{T} \ll \E T$ to the task of showing that the $T$-geodesic traverses linearly many ``feasible pairs''
in expectation. Most of the argument from this section can be transferred directly from \cite{vdBK}, with the main difference being that we define a weaker notion
of ``feasible pair.'' Proofs are given where the necessary modifications from \cite{vdBK} are not obvious.
Note that the arguments of this section allow us to stop considering $\tilde{w}$ or $\tilde{T}$ and simply focus on understanding the $T$-geodesic.

First, note that we have the following theorem of van den Berg and Kesten:
\begin{thm}[\cite{vdBK}, Theorem 2.9a]
   Let $\nu$ and $\tilde{\nu}$ be probability measures on $[0,\infty)$ with finite mean such that $\tilde{\nu}$ is more variable than $\nu$. Then for all $x,y \in V$
   \[
      \E \tilde{T}(x,y) \le \E T(x,y).
   \]
\end{thm}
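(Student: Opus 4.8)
The plan is to reproduce the argument of van den Berg and Kesten, whose heart is a coupling of the two weight families followed by a path-by-path comparison. First I would record the following coupling fact: since $\tilde{\nu}$ is more variable than $\nu$ --- which is precisely the statement that $\tilde{\nu}$ lies below $\nu$ in the \emph{increasing concave stochastic order} --- a Strassen-type theorem provides a probability space carrying random variables $X \sim \nu$ and $\tilde{X} \sim \tilde{\nu}$ with $\E[\tilde{X} \mid X] \le X$ almost surely. (Taking $f(x)=x$ in the definition of ``more variable'' already shows $\E X < \infty$, $\E\tilde{X} < \infty$ and $\E\tilde{X} \le \E X$, which is consistent with such a coupling; the coupling itself can be extracted from the cut criterion $\int_{-\infty}^{t} F_{\tilde\nu}(s)\,ds \ge \int_{-\infty}^{t} F_{\nu}(s)\,ds$ for all $t$, or by factoring $\tilde{\nu} \le_{\mathrm{cx}} \nu' \le_{\mathrm{st}} \nu$ through an intermediate measure $\nu'$ and composing a dilation coupling with a monotone coupling, and it is in any case established in \cite{vdBK}.) I would then couple the full weight families $\{w(e)\}_{e \in E}$ and $\{\tilde{w}(e)\}_{e \in E}$ by drawing each pair $(w(e), \tilde{w}(e))$ as in this fact, independently across the edges $e$. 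This makes $\E[\tilde{w}(e) \mid (w(e'))_{e' \in E}] = \E[\tilde{w}(e) \mid w(e)] \le w(e)$ almost surely for every $e$.

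Granting the coupling, the rest is bookkeeping. Fix $x, y \in V$ and a self-avoiding path $\pi_0 : x \to y$ (which exists since $G$ is connected); finiteness of both means gives $\E T(x,y) \le \E \sum_{e \in \pi_0} w(e) < \infty$ and likewise $\E \tilde{T}(x,y) < \infty$, so all quantities below are integrable. For an arbitrary self-avoiding path $\pi : x \to y$, linearity of conditional expectation together with the coupling gives $\E[\tilde{T}(\pi) \mid w] = \sum_{e \in \pi} \E[\tilde{w}(e) \mid w] \le \sum_{e \in \pi} w(e) = T(\pi)$ almost surely. Since $\tilde{T}(x,y) = \inf_{\pi} \tilde{T}(\pi) \le \tilde{T}(\pi)$ for every such $\pi$, taking conditional expectations yields $\E[\tilde{T}(x,y) \mid w] \le \E[\tilde{T}(\pi) \mid w] \le T(\pi)$ almost surely, for every $\pi$; the self-avoiding paths from $x$ to $y$ form a countable set (local finiteness), so taking the infimum over them gives $\E[\tilde{T}(x,y) \mid w] \le \inf_{\pi} T(\pi) = T(x,y)$ almost surely, and taking expectations finishes the proof. (Equivalently, one may phrase this as: $T(x,y)$ is a coordinatewise-nondecreasing concave function of the weight vector, being an infimum of nonnegative linear functionals, and the edgewise coupling realizes the multivariate increasing concave order, so a Jensen-type inequality applies; but the path-by-path version above avoids any infinite-dimensional Jensen argument.)

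The only genuinely nontrivial step is the coupling in the first paragraph: translating the analytic hypothesis ``$\int f\,d\tilde{\nu} \le \int f\,d\nu$ for all concave nondecreasing $f$'' into an explicit mean-decreasing coupling $\E[\tilde{X}\mid X] \le X$ is exactly where the variability hypothesis is consumed, and it rests on the Strassen / Hardy--Littlewood--P\'olya circle of ideas for stochastic orders. Everything after that is routine: the interchange of the infimum over a countable family of paths with conditional expectation, and the a priori integrability of $T(x,y)$ and $\tilde{T}(x,y)$, are both handled by dominating these passage times by the weight of the single fixed path $\pi_0$.
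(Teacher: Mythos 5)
Your proposal is correct and is essentially the argument the paper relies on (it cites \cite{vdBK}, Theorem 2.9a, together with the Strassen--Whitt coupling $\E[\tilde{w}(e)\mid w(e)]\le w(e)$, and notes the extension to locally finite graphs is routine): couple the weights edgewise and compare passage times path by path. Your variant of conditioning on $w$ and then taking the infimum over the countable family of self-avoiding paths is a clean way to avoid any appeal to the existence of $T$-geodesics, but it is the same approach in substance.
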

\noindent
Although the proof in \cite{vdBK} is stated only for $G = \Z^d$, it easily extends to all locally finite graphs.

We also have
\begin{thm}[\cite{Strassen, Whitt}]
   Let $\nu$ and $\tilde{\nu}$ be probability measures on $[0,\infty)$ with finite mean such that $\tilde{\nu}$ is strictly more variable than $\nu$.
   Then there exists a coupling $(w(e), \tilde{w}(e))$ such that $w(e)$ is $\nu$-distributed, $\tilde{w}(e)$ is $\tilde{\nu}$-distributed,
   and 
   \[
      \E[ \tilde{w}(e) | w(e) ] \le w(e)
   \]
   almost surely.
\end{thm}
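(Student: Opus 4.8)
\emph{Proof plan.} This is precisely Strassen's theorem for the increasing concave order — ``more variable'' as defined here is exactly the relation $\int f\,d\tilde\nu\le\int f\,d\nu$ for all concave nondecreasing $f$ — so the cleanest thing is to quote \cite{Strassen,Whitt}; below I sketch a proof essentially from scratch. One direction is immediate from Jensen: if a coupling with $\E[\tilde w(e)\mid w(e)]\le w(e)$ exists, then for any concave nondecreasing $f$ integrable against both measures,
\[
   \int f\,d\tilde\nu=\E\big[\E[f(\tilde w(e))\mid w(e)]\big]\le \E f\big(\E[\tilde w(e)\mid w(e)]\big)\le \E f(w(e))=\int f\,d\nu,
\]
the first inequality by concavity and the second because $f$ is nondecreasing. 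So the content is the converse, and strictness plays no role (when $\tilde\nu=\nu$ take the diagonal coupling), so I will only use that $\tilde\nu$ is more variable than $\nu$. The plan is: (1) reformulate the hypothesis via quantile functions; (2) decompose ``$\tilde\nu$ more variable than $\nu$'' as ``a mean-preserving spread of $\nu$, followed by a stochastic decrease,'' i.e.\ build an intermediate measure $\nu'$; (3) couple using Strassen's theorem for the convex order and a monotone coupling, then glue.

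For (1): let $Q,\tilde Q\colon(0,1)\to[0,\infty)$ be the quantile functions of $\nu,\tilde\nu$ and $\Phi_\mu(s):=\int_0^s Q_\mu(u)\,du$. Testing against the concave nondecreasing functions $x\mapsto\min(x-t,0)$ together with constants shows that ``$\tilde\nu$ more variable than $\nu$'' is equivalent to $\Phi_{\tilde\nu}(s)\le\Phi_\nu(s)$ for all $s\in(0,1)$, with both sides finite since the means are finite; in particular $\Phi_{\tilde\nu}(1)=\int x\,d\tilde\nu\le\int x\,d\nu=\Phi_\nu(1)=:m$. For (2), the construction I have in mind is: put $\psi:=\Phi_\nu-\Phi_{\tilde\nu}\ge 0$, let $g$ be the greatest convex minorant of $\psi$ on $[0,1]$ — so $g$ is convex with $0\le g\le\psi$, $g(0)=0$, $g(1)=m-\Phi_{\tilde\nu}(1)\ge 0$, and $g$ is nondecreasing, since a nonnegative convex function vanishing at the left endpoint is nondecreasing — and set $\Phi_{\nu'}:=\Phi_{\tilde\nu}+g$. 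Then $\Phi_{\nu'}$ is convex, $\Phi_{\nu'}(0)=0$, $\Phi_{\nu'}(1)=m$, $\Phi_{\nu'}\le\Phi_\nu$, and $\Phi_{\nu'}-\Phi_{\tilde\nu}=g$ is nondecreasing; hence its derivative $Q_{\nu'}:=\Phi_{\nu'}'$ is the quantile function of a probability measure $\nu'$ on $[0,\infty)$ with mean $m$ such that $Q_{\nu'}\ge\tilde Q$ pointwise (so $\nu'$ stochastically dominates $\tilde\nu$) and $\Phi_{\nu'}\le\Phi_\nu$ with equality at $0$ and $1$ (so $\nu'$ is a mean-preserving spread of $\nu$, i.e.\ $\nu$ is below $\nu'$ in the convex order). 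Verifying these quantile reformulations and checking that the convex-minorant construction genuinely yields a distribution function — with due care for the atoms of $\nu$ and $\tilde\nu$, which only make $Q,\tilde Q$ locally constant — is the step I expect to cost the most work, though it is all routine.

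Given $\nu'$, I would conclude as follows. By Strassen's theorem for the convex order there is a coupling $(w(e),w'(e))$ with $w(e)\sim\nu$, $w'(e)\sim\nu'$ and $\E[w'(e)\mid w(e)]=w(e)$ almost surely (alternatively, build this martingale coupling directly from the quantile functions of $\nu$ and $\nu'$). Since $\nu'$ stochastically dominates $\tilde\nu$, the comonotone pair $(\tilde Q(U),Q_{\nu'}(U))$ with $U$ uniform on $(0,1)$ is a coupling $(\tilde w(e),w'(e))$ with $\tilde w(e)\le w'(e)$ almost surely and the correct marginals. Gluing these two couplings along their common $w'(e)$-coordinate — i.e.\ forming a triple $(w(e),w'(e),\tilde w(e))$ in which $w(e)$ and $\tilde w(e)$ are conditionally independent given $w'(e)$ — produces a coupling of $\nu$ and $\tilde\nu$ with
\[
   \E[\tilde w(e)\mid w(e)]=\E\big[\,\E[\tilde w(e)\mid w'(e)]\ \big|\ w(e)\,\big]\le\E[w'(e)\mid w(e)]=w(e)
\]
almost surely, using $\tilde w(e)\le w'(e)$ in the middle. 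This is the desired coupling.
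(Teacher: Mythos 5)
Your proposal is correct in outline, but note that the paper does not prove this statement at all: it is quoted directly from \cite{Strassen, Whitt}, so there is no internal proof to compare against. What you have written is essentially the standard argument underlying those references (cf.\ the increasing concave order results in Shaked--Shanthikumar): decompose ``more variable'' as a stochastic-order step composed with a convex-order (mean-preserving spread) step, use Strassen's martingale coupling for the latter and a comonotone coupling for the former, and glue along the intermediate variable. Your Jensen argument for the easy direction, the observation that strictness of the variability relation plays no role, and the conditional-independence gluing computation $\E[\tilde w\mid w]=\E[\E[\tilde w\mid w']\mid w]\le\E[w'\mid w]=w$ are all fine. The one place where the ``routine'' work you flag genuinely needs care is the greatest convex minorant step: you need $g$ to be continuous at $s=1$ with $g(1)=\psi(1)$ and absolutely continuous on $[0,1]$, since otherwise $\nu'$ would have mean strictly less than $m$ and the martingale coupling with $\nu$ would not exist. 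This does go through: the greatest convex minorant of the continuous function $\psi$ is the lower boundary of the convex hull of its (compact) graph, which forces $g(1)=\psi(1)$ and continuity at the endpoint, and a convex function continuous on a compact interval is absolutely continuous there, so $\int_0^1 Q_{\nu'}=\Phi_{\nu'}(1)=m$; likewise your argument that $0\le g\le\psi$, $g(0)=0$ and convexity force $g$ nondecreasing is correct, giving $Q_{\nu'}\ge\tilde Q$ a.e.\ and hence the stochastic domination. With those checks made explicit, your write-up would stand as a self-contained proof of the cited theorem.
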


Another lemma from \cite{vdBK} which we will need is the following:
\begin{lemma}[\cite{vdBK}, Lemma 4.5] \label{lem:wlog}
   We may assume without loss of generality that in our coupling 
   \begin{equation} \label{eq:extraassumption}
      \Prob( \tilde{w}(e) > w(e) ) > 0.
   \end{equation}
   Explicitly, either this holds, or there exists some $\bar{w}(e)$ such that $\E \tilde{T}(x,y) \le \E \bar{T}(x,y)$ for all $x,y \in V$,
   the distribution of $\bar{w}(e)$ is strictly more variable than $\nu$, and such that \eqref{eq:extraassumption}
   holds with $\tilde{w}$ replaced by $\bar{w}$, i.e. $\Prob( \bar{w}(e) > w(e) ) > 0$.
\end{lemma}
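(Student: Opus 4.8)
The plan is to dichotomize on whether the Strassen--Whitt coupling we already have satisfies \eqref{eq:extraassumption}. If $\Prob(\tilde{w}(e) > w(e)) > 0$ there is nothing to do, so assume $\tilde{w}(e) \le w(e)$ almost surely. Since $\tilde{\nu}$ is \emph{strictly} more variable than $\nu$ we have $\tilde{\nu}\ne\nu$, and together with $\tilde{w}(e)\le w(e)$ a.s.\ this forces $\Prob(\tilde{w}(e) < w(e)) > 0$; there is genuine ``slack'' in the coupling. The idea is then to spend a little of this slack to manufacture a positive-probability overshoot of $w(e)$, while keeping the new weight more variable than $\nu$ and not letting its expected passage times drop below those coming from $\tilde{\nu}$.

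Concretely, I would enlarge the probability space by an i.i.d.\ family $\{U(e)\}_{e\in E}$ of $\mathrm{Unif}[0,1]$ variables independent of $(w,\tilde{w})$, and set
\[
   \bar{w}(e) \;:=\;
   \begin{cases}
      2 w(e) - \tilde{w}(e), & U(e) < \tfrac{1}{2},\\
      \tilde{w}(e), & U(e) \ge \tfrac{1}{2}.
   \end{cases}
\]
Because $w(e)\ge\tilde{w}(e)\ge 0$ we have $\bar{w}(e)\ge w(e)\ge\tilde{w}(e)\ge 0$, so this is a legitimate set of weights, and the pairs $(\bar{w}(e), w(e))$ are i.i.d.; let $\bar\nu$ be the common law of $\bar{w}(e)$. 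Then I would verify the four required properties. First, $\E[\bar{w}(e)\mid w(e),\tilde{w}(e)] = \tfrac{1}{2}(2w(e)-\tilde{w}(e)) + \tfrac{1}{2}\tilde{w}(e) = w(e)$, so $\E[\bar{w}(e)\mid w(e)] = w(e)\le w(e)$ and $\E\bar{w}(e) = \E w(e) < \infty$ (finite mean and the coupling inequality). Second, $\Prob(\bar{w}(e) > w(e)) \ge \Prob\bigl(U(e)<\tfrac{1}{2},\ \tilde{w}(e) < w(e)\bigr) = \tfrac{1}{2}\,\Prob(\tilde{w}(e) < w(e)) > 0$, which is \eqref{eq:extraassumption} with $\tilde{w}$ replaced by $\bar{w}$. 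Third, $\bar{w}(e)\ge\tilde{w}(e)$ pointwise in this coupling, so $\bar{T}(\gamma)\ge\tilde{T}(\gamma)$ for every path $\gamma$, hence $\bar{T}(x,y)\ge\tilde{T}(x,y)$ pointwise and $\E\tilde{T}(x,y)\le\E\bar{T}(x,y)$ for all $x,y$ (equivalently, $\bar\nu$ stochastically dominates, in particular is less variable than, $\tilde\nu$, so Theorem 2.9a of \cite{vdBK} applies). Fourth, since $\E[\bar{w}(e)\mid w(e)] = w(e)$, conditional Jensen gives $\int\phi\,d\bar\nu \ge \int\phi\,d\nu$ for every convex $\phi$ with absolutely convergent integrals; applying this to $\phi=-f$ shows $\int f\,d\bar\nu \le \int f\,d\nu$ for every concave nondecreasing $f$ (each such $f$ is dominated by an affine function, so the integrals converge since both measures have finite mean), i.e.\ $\bar\nu$ is more variable than $\nu$. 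Finally $\Prob(\bar{w}(e)\ne w(e)) = \Prob(\tilde{w}(e)\ne w(e)) > 0$, so given $w(e)$ the variable $\bar{w}(e)$ is a genuinely nondegenerate randomization with conditional mean $w(e)$; strict conditional Jensen with a bounded strictly convex test function then yields $\int\phi\,d\bar\nu > \int\phi\,d\nu$, so $\bar\nu\ne\nu$ and $\bar\nu$ is \emph{strictly} more variable than $\nu$.

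The one point requiring care, which I would flag as the crux, is the choice of the perturbation defining $\bar{w}(e)$: one needs $\bar\nu$ to sit ``between'' $\nu$ and $\tilde\nu$ in the variability order --- more variable than $\nu$ so that the main theorems apply to $\bar\nu$, yet not more variable than $\tilde\nu$ so that $\E\bar{T}\ge\E\tilde{T}$ --- while still exceeding $w(e)$ with positive probability. A naive upward shift such as $\bar{w}(e) = w(e)+\delta$ on a rare event fails to be more variable than $\nu$ whenever $\nu$ puts mass near $0$, since then $\int_0^t \Prob(\bar{w}(e)>s)\,ds > \int_0^t\Prob(w(e)>s)\,ds$ for small $t$. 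Reflecting $w(e)$ through itself using exactly the available slack $w(e)-\tilde{w}(e)$ is what makes $\bar\nu$ a genuine dilation of $\nu$ (hence more variable), keeps $\bar{w}(e)\ge\tilde{w}(e)$ pointwise (hence $\E\bar{T}\ge\E\tilde{T}$), and still overshoots $w(e)$ on the event $\{U(e)<\tfrac{1}{2},\ \tilde{w}(e)<w(e)\}$. Once this construction is in hand, the remaining verifications are routine.
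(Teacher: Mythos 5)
Your proof is correct. The reflection construction you use---$\bar{w}(e)$ equal to $2w(e)-\tilde{w}(e)$ or to $\tilde{w}(e)$ with probability $\tfrac12$ each, on top of the Strassen--Whitt coupling with $\tilde{w}\le w$ a.s.---is a mean-preserving spread of $\nu$ that pointwise dominates $\tilde{w}$, and it is essentially the argument of van den Berg and Kesten's Lemma 4.5, to which the paper delegates the proof; all the needed verifications ($\E[\bar w(e)\mid w(e)]=w(e)$ giving variability by conditional Jensen, $\bar w\ge\tilde w$ pointwise giving $\E\bar T\ge\E\tilde T$, the overshoot on $\{U(e)<\tfrac12,\ \tilde w(e)<w(e)\}$, and $\bar\nu\ne\nu$ via strict Jensen applied to a bounded strictly convex test function such as $e^{-x}$) are carried out correctly.
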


A key technical lemma we will use is the following:
\begin{lemma}[\cite{vdBK}, Lemma 4.8] \label{lem:technicallemma}
   Let $\nu$, $\tilde{\nu}$ be probability measures with finite mean such that $\tilde{\nu}$ is more variable than $\nu$ and such that 
   \eqref{eq:extraassumption} holds. Then there exist $\epsilon > 0$, $a>0$, $b>0$, $g > 0$, and
   a bounded Borel set $I_0 \subset [0, \infty)$ and $y_0 \in I_0$ with the following properties:
   \begin{itemize}
      \item For all $\delta > 0$, $\nu(I_0 \cap (y_0 - \delta, y_0 + \delta)) > 0$.
      \item For all $y \in I_0$,
      \[
         \Prob( \tilde{w}(e) > y + a | w(e) = y ) \ge b.
      \]
      \item For any $k \ge 1$, $\delta > 0$, and 
      any $y_1,...,y_k, y'_1,...,y'_{\lfloor (1 + \epsilon) k \rfloor} \in I_0 \cap (y_0 - \delta, y_0 + \delta)$, we have
      \[
         \sum_{i=1}^{k} (y_i + a) - \sum_{i=1}^{\lfloor (1 + \epsilon) k \rfloor} y'_i > kg \ge g.
      \]
   \end{itemize}
\end{lemma}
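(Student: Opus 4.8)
The plan is to build every object by hand, using essentially only hypothesis \eqref{eq:extraassumption}; I expect the ``more variable'' assumption and the bound $\E[\tilde{w}(e)\mid w(e)]\le w(e)$ to enter only through guaranteeing that the coupling exists. First I would fix a regular conditional distribution $\kappa(y,\cdot)$ of $\tilde{w}(e)$ given $w(e)=y$ (a Markov kernel on $[0,\infty)$), and for $a\ge 0$ set $h_a(y):=\kappa(y,(y+a,\infty))=\Prob(\tilde{w}(e)>y+a\mid w(e)=y)$, a Borel function of $y$. Since $(y+a,\infty)\uparrow(y,\infty)$ as $a\downarrow 0$, we get $h_a(y)\uparrow h_0(y)$ pointwise, and $\int h_0\,d\nu=\Prob(\tilde{w}(e)>w(e))>0$ by \eqref{eq:extraassumption}, so $\nu(\{h_0>0\})>0$.

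Next I would run a pigeonhole to turn this pointwise positivity into a uniform lower bound on a positive-measure set: for $n\in\N$ set $A_n:=\{y:h_{1/n}(y)\ge 1/n\}$; then $\{h_0>0\}\subseteq\bigcup_n A_n$ (if $h_0(y)>1/n$ then $h_{1/m}(y)>1/n\ge 1/m$ once $m$ is large), so $\nu(A_N)>0$ for some $N$. Put $a:=b:=1/N$ and $A:=A_N$, so that $\nu(A)>0$ and $h_a(y)\ge b$ for every $y\in A$. I would then pass to a ``density point'' of $A$: letting $B\subseteq A$ be the set of points admitting a rational interval $(q,q')\ni y$ with $\nu(A\cap(q,q'))=0$, a countable cover of $B$ by such intervals shows $\nu(B)=0$, hence $A\setminus B\ne\emptyset$; choose $y_0\in A\setminus B$, so that $y_0\in A$ and $\nu(A\cap(y_0-\delta,y_0+\delta))>0$ for every $\delta>0$.

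Finally I would take $I_0:=A\cap(y_0-r,y_0+r)$ with $r:=a/4$. This is a bounded Borel set containing $y_0$ (as $y_0\in A$), and the first two bulleted properties hold with these $a,b,y_0$. For the third bullet, every element of $I_0\cap(y_0-\delta,y_0+\delta)$ lies strictly in $(y_0-r,y_0+r)$, so using $\lfloor(1+\epsilon)k\rfloor\le(1+\epsilon)k$ and $y_0+r>0$,
\[
   \sum_{i=1}^{k}(y_i+a)-\sum_{i=1}^{\lfloor(1+\epsilon)k\rfloor}y'_i \;>\; k(y_0-r+a)-(1+\epsilon)k(y_0+r)\;=\;k\bigl(a-2r-\epsilon(y_0+r)\bigr)\;=\;k\bigl(\tfrac{a}{2}-\epsilon(y_0+\tfrac{a}{4})\bigr).
\]
Choosing $\epsilon\in\bigl(0,\tfrac{a/2}{\,y_0+a/4\,}\bigr)$ and $g:=\tfrac{a}{2}-\epsilon(y_0+\tfrac{a}{4})>0$, the right-hand side equals $kg$, and $kg\ge g$ since $k\ge 1$. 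This yields all the claimed properties.

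The only genuinely delicate part I foresee is the measure-theoretic bookkeeping in the first two steps, and specifically the need to place the centre $y_0$ \emph{inside} $A$ so that the conditional lower bound $h_a(y)\ge b$ holds at $y_0$ itself; a point of $\supp\nu$ merely in the closure of $A$ would not suffice, which is exactly why the density-point step is required rather than simply taking a support point. Everything else is an elementary one-line estimate, with the parameters fixed in the order: first $a,b$, then $I_0$, then $\epsilon,g$.
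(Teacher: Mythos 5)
Your proof is correct and follows essentially the same route as the paper's: extract a positive-$\nu$-measure Borel set on which the conditional probability of $\tilde{w}(e) > w(e)+a$ is uniformly at least $b$, center a small interval at a density point $y_0$ of that set to define $I_0$, and fix $\epsilon$ and $g$ by the same elementary arithmetic on $a - 2r - \epsilon(y_0+r)$. The only difference is that you spell out the monotone-convergence/pigeonhole construction of $a,b$ and the density-point selection guaranteeing $y_0$ lies \emph{in} the set (the paper asserts the first step without detail and only calls $y_0$ a support point of $B$), which are worthwhile but minor refinements of the same argument.
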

\begin{proof}
   The proof is essentially the same as that given in \cite{vdBK}, but simpler since we do not actually
   need as many conditions. By \eqref{eq:extraassumption}, 
   for some sufficiently small $a,b > 0$ there is
   some Borel set $B \subset [0, \infty)$ such that $\nu(B) > 0$ and for all $y \in B$,
   \[
      \Prob(\tilde{w}(e) > y + a | w(e) = y) \ge b.
   \]
   Let $y_0$ be a point of support for $B$, that is, a point such that $\nu(B \cap (y_0 - \delta, y_0 + \delta)) > 0$
   for all $\delta > 0$. Choose $\epsilon > 0$ sufficiently small such that $\epsilon y_0 < a$. Then choose $\delta_0 > 0$
   sufficiently small that
   \[
     \epsilon y_0 + 2 \delta_0 + \epsilon \delta_0 < a,
   \]
   and choose
   \[
      0 < g < a - (\epsilon y_0 + 2 \delta_0 + \epsilon \delta_0).
   \]
   Then we can take $I_0 := B \cap (y - \delta_0, y + \delta_0)$.
   The first two conditions clearly hold by construction; let us show the last condition:
   \begin{align*}
      \sum_{i=1}^{k} (y_i + a) - \sum_{i=1}^{\lfloor (1 + \epsilon) k \rfloor} y'_i &\ge k(y_0 - \delta_0 + a) - k(1 + \epsilon)(y_0 + \delta_0) \\
      &= k( a - 2\delta_0 - \epsilon y_0 - \epsilon \delta_0 ) > kg \ge g.
   \end{align*}
\end{proof}

\begin{defn}
   Let $\pi, \pi'$ be a pair of paths with the same starting and ending point such that $\pi \ne \pi'$ (as edge sets). 
   We say that $\pi'$ is a \emph{$\epsilon$-detour for $\pi$} if
   \[
      |\pi' \setminus \pi| \le (1 + \epsilon) |\pi \setminus \pi'|.
   \]
   Here, $\pi'$ and $\pi$ are identified with the sets of edges they contain, $\setminus$ denotes set difference, and $| \cdot |$ is the cardinality of a set.
\end{defn}
Note that the condition that $\pi' \ne \pi$ implies that $|\pi \setminus \pi'| \ge 1$. For otherwise we would have
that $|\pi' \setminus \pi| \le (1 + \epsilon) |\pi \setminus \pi'| = (1+ \epsilon) \cdot 0 = 0$, that is,
$|\pi' \setminus \pi| = |\pi \setminus \pi'| = 0$ and hence $\pi' = \pi$.
Intuitively, one should think of $\pi'$ as an ``alternate path'' which misses a fair number of edges of the original path
but is not much longer than the original path. A simple but useful observation is:
\begin{prop}
   $\pi'$ is an $\epsilon$-detour for $\pi$ if and only if $\pi$ and $\pi'$ have the same endpoints and
   \[
      |\pi'| - |\pi| \le \epsilon |\pi \setminus \pi'|.
   \]
\end{prop}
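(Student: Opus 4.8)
The plan is to reduce the claimed equivalence to an elementary counting identity for the symmetric-difference decomposition of the two edge sets. Observe that both the definition of ``$\epsilon$-detour for $\pi$'' and the statement of the proposition include the requirement that $\pi$ and $\pi'$ have the same endpoints (and, reading the proposition with the same standing hypothesis as the definition, that $\pi \ne \pi'$ as edge sets); since that clause appears verbatim on both sides of the biconditional, it contributes nothing, and all the content lies in showing that, under it, the inequality $|\pi' \setminus \pi| \le (1 + \epsilon)|\pi \setminus \pi'|$ is equivalent to $|\pi'| - |\pi| \le \epsilon |\pi \setminus \pi'|$.

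First I would recall that a path in $G$ is a finite alternating sequence of vertices and edges, so the edge sets $\pi, \pi' \subset E$ are finite and their cardinalities may be manipulated freely. Partitioning $\pi'$ as the disjoint union $(\pi' \setminus \pi) \sqcup (\pi \cap \pi')$ and $\pi$ as $(\pi \setminus \pi') \sqcup (\pi \cap \pi')$ gives $|\pi'| = |\pi' \setminus \pi| + |\pi \cap \pi'|$ and $|\pi| = |\pi \setminus \pi'| + |\pi \cap \pi'|$; subtracting cancels the common term $|\pi \cap \pi'|$ and yields the key identity
\[
   |\pi'| - |\pi| = |\pi' \setminus \pi| - |\pi \setminus \pi'|.
\]

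Then I would substitute this into the detour inequality: $|\pi' \setminus \pi| \le (1 + \epsilon)|\pi \setminus \pi'|$ holds if and only if $|\pi' \setminus \pi| - |\pi \setminus \pi'| \le \epsilon |\pi \setminus \pi'|$, which by the displayed identity is exactly $|\pi'| - |\pi| \le \epsilon |\pi \setminus \pi'|$. All steps are reversible, so this establishes the equivalence of the two inequalities under the common hypothesis that $\pi$ and $\pi'$ have the same endpoints, which is precisely the assertion of the proposition.

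There is essentially no obstacle here; the only points needing any care are that the edge sets involved are finite (so the cardinality arithmetic is legitimate) and that the shared ``same endpoints'' hypothesis is carried along on both sides, both of which are immediate from the standing conventions on paths. If one wishes, the same computation also shows directly that the inequality $|\pi'| - |\pi| \le \epsilon|\pi \setminus \pi'|$ together with $\pi \ne \pi'$ forces $|\pi \setminus \pi'| \ge 1$, matching the remark following the definition.
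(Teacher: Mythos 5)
Your proof is correct and takes essentially the same approach as the paper: both rely on the cardinality identities $|\pi \setminus \pi'| = |\pi| - |\pi \cap \pi'|$ and $|\pi' \setminus \pi| = |\pi'| - |\pi \cap \pi'|$, and your argument simply spells out the ``algebraic manipulation'' that the paper leaves to the reader.
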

\begin{proof}
   This follows immediately from the facts that $|\pi \setminus \pi'| = |\pi| - |\pi \cap \pi'|$ and $|\pi' \setminus \pi| = |\pi'| - |\pi \cap \pi'|$, together
   with some algebraic manipulation.
\end{proof}

\begin{defn}
   Let $\epsilon$ and $I_0$ all be as in Lemma \ref{lem:technicallemma},
   and let $C$ be a constant.
   We call $(\alpha, \gamma)$ a \emph{feasible pair} with respect to the $T$-geodesic\footnote{
   A $T$-geodesic from $x$ to $y$ is a path $\pi:x \to y$ with $T(\pi) = T(x,y)$.
   In general there may be more than one $T$-geodesic; we implicitly fix an arbitrary well-ordering
   on self-avoiding paths in $G$ and define ``the'' $T$-geodesic $\pi$ from $x$ to $y$ to be the $T$-geodesic
   which is least in this ordering.
   On the other hand, it is not a priori obvious that a $T$-geodesic exists. If $\nu(\{0\}) < p_c(G)$ and $G$ is locally finite, then it is
   easily shown (see Proposition 4.4 in \cite{ADH}) that all pairs of points $x,y \in V$ admit a $T$-geodesic;
   in particular, if $\nu$ is exponential-subcritical, $T$-geodesics exist.
   In Section \ref{sec:qitrees} we do not assume that $\nu$ is exponential-subcritical, but the arguments are easily modified
   to avoid the assumption that $T$-geodesics exist, by considering paths $\pi:x \to y$ with $T(\pi) \le T(x,y) + \epsilon$
   and letting $\epsilon \to 0$.}
   $\pi$ from $x$ to $y$ if both $\alpha$ and $\gamma$ are self-avoiding,
   $\gamma$ is an $\epsilon$-detour for $\alpha$ of length at most $C(1+\epsilon)$, 
   $\alpha$ is a subpath of $\pi$, and for all $e \in (\alpha \cup \gamma) \setminus (\alpha \cap \gamma)$, $w(e) \in I_0$.
\end{defn}
This notion of course depends on $C$, $\epsilon$, and $I_0$ even though this is suppressed in the notation.
Here $C$ is an unspecified constant, but in practice there will be one particular $C=C(\epsilon)$ that we end up using.
These detours turn out to be key to proving the strict inequalities we want to show, as we shall see in the next lemma.

The following lemma is essentially contained within Lemma 5.19 and the proof of Theorem 2.9(b)
from Proposition 5.22 in \cite{vdBK}, but we write it here to be 
explicit about what the necessary modifications are.
\begin{lemma} \label{lem:feasibletogap}
   Let $\nu, \tilde{\nu}$ have finite mean and be such that
   $\tilde{\nu}$ is strictly more variable than $\nu$ and \eqref{eq:extraassumption} holds. 
   Let $\epsilon$ and $I_0$ be given as in Lemma \ref{lem:technicallemma} and let $C$
   be fixed. Then there exists some constant $c_0 > 0$ such that if $G=(V,E)$ is a graph and $\{B_i\}_{i \in I} \subset E$
   is a family of disjoint subsets, for any $x,y \in V$ we have
   \[
      \E T(x,y) - \E \tilde{T}(x,y) \ge c_0 \sum_{i \in I} \Prob( B_i \mbox{ contains a feasible pair for the } T\mbox{-geodesic } \pi: x \to y ).
   \]
\end{lemma}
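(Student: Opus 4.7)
The plan is to adapt the strategy of \cite[Lemma 5.19 and the proof of Theorem 2.9(b)]{vdBK} using our weaker notion of feasible pair. The starting point is that $\tilde T(x,y) \le \tilde T(\pi^*)$ for every walk $\pi^* : x \to y$; the key idea is to build $\pi^*$ from the $T$-geodesic $\pi$ by local surgery on the feasible pairs, retaining only those swaps that actually decrease $\tilde w$-length.

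Concretely, condition on $(w, \tilde w)$. For each $i$, let $F_i$ denote the event that $B_i$ contains a feasible pair for $\pi$; on $F_i$, select some feasible pair $(\alpha_i, \gamma_i) \subset B_i$ via a fixed deterministic rule. Form $\pi^*$ by simultaneously replacing $\alpha_i$ with $\gamma_i$ in $\pi$ for each $i$ such that $F_i$ holds and $\tilde w(\alpha_i) > \tilde w(\gamma_i)$. Disjointness of the $B_i$'s means the $\alpha_i$'s are edge-disjoint subpaths of the (self-avoiding) path $\pi$, so the swaps do not conflict and $\pi^*$ is a walk from $x$ to $y$ with
\[
   \tilde T(\pi^*) = \tilde T(\pi) - \sum_i \ind_{F_i}\, \max\bigl(0,\, \tilde w(\alpha_i) - \tilde w(\gamma_i)\bigr).
\]
Rearranging,
\[
   T(x,y) - \tilde T(x,y) \ge \bigl[T(\pi) - \tilde T(\pi)\bigr] + \sum_i \ind_{F_i}\, \max\bigl(0,\, \tilde w(\alpha_i) - \tilde w(\gamma_i)\bigr).
\]

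I then take expectations. Conditioning on $w$ (which determines $\pi$), the coupling property $\E[\tilde w(e) \mid w(e)] \le w(e)$ applied edge by edge gives $\E[T(\pi) - \tilde T(\pi) \mid w] \ge 0$. For the sum, I fix $i$ and work on $F_i$; let $k := |\alpha_i \setminus \gamma_i|$, which is at most some constant $C' = C'(C,\epsilon)$ under the convention that feasible pairs have bounded size. Let $E_i$ be the event $\{\tilde w(e) > w(e) + a \text{ for every } e \in \alpha_i \setminus \gamma_i\}$. Because $w(e) \in I_0$ for these edges (by feasibility) and the $\tilde w(e)$'s are edge-independent given $w$, the second bullet of Lemma~\ref{lem:technicallemma} yields $\Prob(E_i \mid w) \ge b^k \ge b^{C'}$ on $F_i$. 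Moreover $E_i$ constrains $\tilde w$ only on $\alpha_i \setminus \gamma_i$, so it is independent of $\tilde w$ on $\gamma_i \setminus \alpha_i$ given $w$; combining with the third bullet of Lemma~\ref{lem:technicallemma} applied to the $w$-values (which lie in $I_0$),
\[
   \E[\tilde w(\alpha_i) - \tilde w(\gamma_i) \mid w, E_i] > \sum_{e \in \alpha_i \setminus \gamma_i}(w(e) + a) - \sum_{e \in \gamma_i \setminus \alpha_i} w(e) \ge kg \ge g.
\]
By Jensen's inequality $\E[\max(0,X)] \ge \max(0, \E[X])$ applied conditionally on $(w, E_i)$, together with $\max(0,X) \ge \max(0,X)\ind_{E_i}$,
\[
   \E[\max(0, \tilde w(\alpha_i) - \tilde w(\gamma_i)) \mid w] \ge \Prob(E_i \mid w) \cdot g \ge b^{C'} g
\]
on $F_i$. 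Setting $c_0 := b^{C'} g > 0$ and summing over $i$ gives the claimed bound.

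The main obstacle is pinning down that $k = |\alpha_i \setminus \gamma_i|$ is uniformly bounded in terms of $C$ and $\epsilon$ alone: the $\epsilon$-detour property only furnishes a lower bound on $|\alpha_i \setminus \gamma_i|$ in terms of $|\gamma_i \setminus \alpha_i|$, not an upper bound, so capping $|\gamma_i|$ alone does not suffice. The bound on $k$ must be built into the convention that a feasible pair is a bounded-size configuration (so that $|\alpha_i|$ is also bounded), and this is what keeps $\Prob(E_i \mid w)$ uniformly bounded away from zero. Everything else is routine bookkeeping on top of the coupling machinery of \cite{vdBK}.
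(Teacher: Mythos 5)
Your overall route is sound and genuinely different from the paper's: you work directly with $\tilde{T}$, using the surgery $\pi \mapsto \pi^*$, the identity with $\max(0,\cdot)$, conditional Jensen, and the coupling bound $\E[\tilde{w}(e)\mid w(e)]\le w(e)$ to control the $\tilde{w}$-weight of the inserted detours in expectation. The paper instead introduces the Bernoulli mixture $\hat{w}=\ind_{\xi=0}w+\ind_{\xi=1}\tilde{w}$ and ``advantageous'' pairs, so that the comparison $\hat{T}(\alpha)-\hat{T}(\gamma)\ge g$ holds pointwise on an event of uniformly positive conditional probability; your version trades that pointwise control for control in conditional expectation, which is legitimate (the pairs $(w(e),\tilde{w}(e))$ are i.i.d.\ across edges, so conditioning on $w$ and on an event involving $\tilde{w}$ only on $\alpha_i\setminus\gamma_i$ does not disturb $\E[\tilde{w}(e)\mid w]\le w(e)$ on $\gamma_i\setminus\alpha_i$), and it avoids the factor $2^{-(C(1+\epsilon)+C)}$.

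The genuine gap is exactly the point you flag and then resolve by fiat: the uniform lower bound $\Prob(E_i\mid w)\ge b^{k}\ge b^{C'}$ is not available under the paper's definition of a feasible pair. That definition bounds only the detour, $|\gamma|\le C(1+\epsilon)$; it places no upper bound on $|\alpha|$ (the $\epsilon$-detour condition gives $|\alpha\setminus\gamma|\ge\frac{1}{1+\epsilon}|\gamma\setminus\alpha|$, a lower bound, and $\alpha$ is a subpath of the $T$-geodesic, which need not be an edge-geodesic, so $\alpha$ can be arbitrarily long even though its endpoints are within distance $C(1+\epsilon)$). Requiring $\tilde{w}(e)>w(e)+a$ on \emph{all} of $\alpha_i\setminus\gamma_i$ therefore has no uniform conditional probability bound, and ``building the bound on $k$ into the convention'' changes the statement: it shrinks the event $\{B_i\mbox{ contains a feasible pair}\}$, so you prove the inequality with a smaller right-hand side than the lemma asserts. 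The missing idea is the paper's subset device, which fits into your argument with no other changes: on $F_i$ choose $S_i\subset\alpha_i\setminus\gamma_i$ with $|S_i|\ge\frac{1}{1+\epsilon}|\gamma_i\setminus\alpha_i|$ and $|S_i|\le \lceil C\rceil+1$ (possible since $|\gamma_i\setminus\alpha_i|\le C(1+\epsilon)$), let $E_i$ demand the upgrade only on $S_i$, discard the remaining edges of $\alpha_i\setminus\gamma_i$ using nonnegativity of weights, and apply the third bullet of Lemma \ref{lem:technicallemma} with $k=|S_i|$ (note $\lfloor(1+\epsilon)|S_i|\rfloor\ge|\gamma_i\setminus\alpha_i|$). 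Then $\Prob(E_i\mid w)\ge b^{\lceil C\rceil+1}$ uniformly, and your conclusion holds with $c_0=b^{\lceil C\rceil+1}g$, for the lemma as actually stated.
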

\begin{proof}
   As in \cite{vdBK}, let $\hat{w}:E \to [0,\infty)$ be given by
   \[
      \hat{w}(e) := \ind_{\xi(e) = 0} w(e) + \ind_{\xi(e) = 1} \tilde{w}(e),
   \]
   where $\{\xi(e)\}_{e \in E}$ is a family of i.i.d. $\mathrm{Unif}(\{0,1\})$ variables, also independent of $w,\tilde{w}$.
   As shown in Lemma 5.19 of \cite{vdBK}, $\E \tilde{T}(x,y) \le \E \hat{T}(x,y)$ for all $x,y \in V$,
   so it suffices to show the desired inequality with $\tilde{T}$ replaced by $\hat{T}$.
   
   We will call a pair $(\alpha,\gamma)$ \emph{advantageous} for the $T$-geodesic $\pi: x \to y$ if it is feasible for $\pi$
   and furthermore $\xi(e) = 0$ for all $e \in \gamma \setminus \alpha$, $\xi(e) = 1$ for all $e$ in a subset $S \subset \alpha \setminus \gamma$
   of size at least $|S| \ge \frac{1}{1+\epsilon} |\gamma \setminus \alpha|$, and if for all $e \in S$ we have $\tilde{w}(e) > w(e) + a$,
   where $a>0$ is as given in Lemma \ref{lem:technicallemma}.
   Note that, by Lemma \ref{lem:technicallemma}, if $(\alpha,\gamma)$ is advantageous then
   \[
      \hat{T}(\alpha) - \hat{T}(\gamma) = \hat{T}(\alpha \setminus \gamma) - \hat{T}(\gamma \setminus \alpha) 
      \ge \tilde{T}(S) - T(\gamma \setminus \alpha) \ge g,
   \]
   where $g > 0$ is as in the lemma.
   Furthermore, for any pair $(\alpha, \gamma)$ we have
   \[
      \E \left[ \ind_{\{(\alpha,\gamma) \mbox{ is advantageous}\}} \middle| w \right] \ge 
      2^{-(C(1+\epsilon) + C)} b^C \ind_{\{(\alpha,\gamma) \mbox{ is feasible}\}}.
   \]
   (Here we have used that $|\gamma| \le C(1+\epsilon)$).
   
   Therefore, consider a $T$-geodesic $\pi$ from $x$ to $y$. Construct another (random) path $\pi'$ by starting with $\pi$
   and, for each $B_i$, if $B_i$ contains an advantageous pair $(\alpha_i, \gamma_i)$ for $\pi$, replacing the subsegment $\alpha_i$
   with $\gamma_i$. (If $B_i$ contains more than one advantageous pair, choose the least one in some arbitrary ordering).
   We then have
   \begin{align*}
      \hat{T}(\pi) - \hat{T}(\pi') \ge g \sum_i \ind_{\{ B_i \mbox{ contains an advantageous pair for } \pi \}}. 
   \end{align*}
   Since, as shown in Lemma 5.19 of \cite{vdBK}, $\E T(\pi) \ge \E \hat{T}(\pi)$, we have
   \[
      \E T(x,y) - \E \hat{T}(x,y) \ge \E \hat{T}(\pi) - \E \hat{T}(\pi') \ge g \sum_{i \in I} \Prob(B_i \mbox{ contains an advantageous pair for } \pi).
   \]
   But (again using some fixed ordering on pairs inside $B_i$) we have
   \begin{align*}
      &\Prob(B_i \mbox{ contains an advantageous pair for } \pi) \\
      \ge &\sum_{(\alpha,\gamma) \subset B_i} \E \left[ \ind_{\{(\alpha,\gamma) \mbox{ is the least pair in } B_i \mbox{ which is feasible}\}}
                                                                                  \ind_{\{(\alpha,\gamma) \mbox{ is advantageous} \}} \right] \\
      = &\sum_{(\alpha,\gamma) \subset B_i} \E \left[ \ind_{\{(\alpha,\gamma) \mbox{ is the least pair in } B_i \mbox{ which is feasible}\}}
                                                                                  \E[\ind_{\{(\alpha,\gamma) \mbox{ is advantageous} \}} | w] \right] \\
      \ge &2^{-(C(1+\epsilon) + C)} b^C 
            \sum_{(\alpha,\gamma) \subset B_i} \E \left[ \ind_{\{(\alpha,\gamma) \mbox{ is the least pair in } B_i \mbox{ which is feasible}\}} \right] \\
      = & 2^{-(C(1+\epsilon) + C)} b^C \Prob(B_i \mbox{ contains a feasible pair for } \pi).
   \end{align*}
   Thus we have the lemma with $c_0 := g \cdot 2^{-(C(1+\epsilon) + C)} b^C > 0$.
\end{proof}
Inequalities up to a constant factor will appear many times in this paper, so from here we fix the following notation.
For two functions $f$ and $g$ of a parameter $t$, we will write $f(t) \lessim g(t)$ or $g(t) \gessim f(t)$ if there is
some constant $c>0$ and $t_0 < \infty$ such that $f(t) \le c g(t)$ for all $t \ge t_0$.
In this paper our parameter $t$ is typically either $d(x,y)$ or $R$, and which it is should be clear from context.

Finally, it will be convenient to ``upgrade'' to the following lemma (where the same hypotheses on $\nu$ and $\tilde{\nu}$ are assumed):
\begin{lemma} \label{lem:feasibletovdBK}
Let $\{B_i\}_{i \in I}$ be a family of subgraphs of $G$ and suppose that 
\[
   \sup_{i \in I} \# \{ j \in I : B_j \cap B_i \ne \emptyset \} < \infty.
\]
Then, if
\[
   \sum_{i \in I} \Prob( B_i \mbox{ contains a feasible pair for the } T\mbox{ geodesic } \pi : x \to y) \gessim d(x,y)
\]
for all $x,y \in V$ with $d(x,y)$ sufficiently large, then
\[
   \liminf_{d(x,y) \to \infty} \frac{ \E T(x,y) - \E \tilde{T}(x,y) }{ d(x,y) } > 0.
\]
\end{lemma}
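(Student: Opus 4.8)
The plan is to reduce Lemma~\ref{lem:feasibletovdBK} to Lemma~\ref{lem:feasibletogap} by passing from an arbitrary family of overlapping subgraphs $\{B_i\}_{i\in I}$ to a \emph{disjoint} subfamily, at the cost of only a constant factor. Let $N := \sup_{i\in I}\#\{j\in I: B_j\cap B_i\ne\emptyset\}<\infty$ be the bound on the overlap multiplicity. First I would form the ``intersection graph'' $\mathcal{G}$ on vertex set $I$, placing an edge between $i$ and $j$ whenever $B_i\cap B_j\ne\emptyset$ (and $i\ne j$); by hypothesis $\mathcal{G}$ has maximum degree at most $N-1$, so by the greedy colouring argument $\mathcal{G}$ is properly vertex-colourable with at most $N$ colours. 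Fix such a colouring and let $I = I_1\sqcup\cdots\sqcup I_N$ be the resulting partition into colour classes; for each $k$, the subgraphs $\{B_i\}_{i\in I_k}$ are pairwise disjoint (any two that intersected would share a colour, contradicting properness). Note that feasibility of a pair $(\alpha,\gamma)$ with $\alpha\cup\gamma\subset B_i$ only depends on the edges of $G$ and the weights on those edges, so the event ``$B_i$ contains a feasible pair'' is well-defined regardless of which colour class $i$ lies in.

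Next I would apply Lemma~\ref{lem:feasibletogap} separately to each colour class. For each fixed $k$, $\{B_i\}_{i\in I_k}$ is a family of disjoint edge-subsets of $E$, so the lemma gives
\[
   \E T(x,y) - \E\tilde T(x,y) \ge c_0 \sum_{i\in I_k} \Prob\bigl(B_i \text{ contains a feasible pair for } \pi:x\to y\bigr)
\]
with the same constant $c_0>0$ for every $k$ (the constant depends only on $\nu,\tilde\nu,\epsilon,I_0,C$, not on the family). Summing these $N$ inequalities and dividing by $N$,
\[
   \E T(x,y) - \E\tilde T(x,y) \ge \frac{c_0}{N} \sum_{i\in I} \Prob\bigl(B_i \text{ contains a feasible pair for } \pi:x\to y\bigr).
\]
Now invoke the hypothesis: there are constants $c_1>0$ and $t_0<\infty$ so that the right-hand sum is at least $c_1\, d(x,y)$ whenever $d(x,y)\ge t_0$. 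Therefore $\E T(x,y)-\E\tilde T(x,y) \ge \frac{c_0 c_1}{N}\, d(x,y)$ for all such $x,y$, and dividing by $d(x,y)$ and taking $\liminf$ as $d(x,y)\to\infty$ gives
\[
   \liminf_{d(x,y)\to\infty} \frac{\E T(x,y)-\E\tilde T(x,y)}{d(x,y)} \ge \frac{c_0 c_1}{N} > 0,
\]
which is the desired conclusion.

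There is essentially no hard obstacle here; the lemma is a bookkeeping upgrade of Lemma~\ref{lem:feasibletogap}, and the only point requiring any care is the colouring step — one must check that the intersection graph genuinely has bounded degree (immediate from the hypothesis) and that a bounded-degree graph admits a proper colouring with a number of colours bounded by (degree $+1$), which is the standard greedy argument and needs no transfinite induction in the countable case since one can colour $I$ along any enumeration. One should also make sure the constant $c_0$ from Lemma~\ref{lem:feasibletogap} is genuinely uniform over the choice of disjoint family, which it is, since its proof produces $c_0 = g\cdot 2^{-(C(1+\epsilon)+C)} b^C$ depending only on the fixed data $\nu,\tilde\nu,\epsilon,I_0,C$. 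If one prefers to avoid colouring altogether, an alternative is a direct greedy extraction of a single disjoint subfamily $I'\subset I$ with $\sum_{i\in I'}\Prob(\cdots) \ge \frac1N\sum_{i\in I}\Prob(\cdots)$, but the colouring phrasing is cleaner.
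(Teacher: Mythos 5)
Your proposal is correct and follows essentially the same route as the paper: form the intersection graph on $I$, note it has bounded degree, greedily colour it into finitely many classes of pairwise disjoint $B_i$, apply Lemma \ref{lem:feasibletogap} to each class, and absorb the number of classes into the constant (the paper takes the maximal class rather than averaging over all classes, which is an immaterial difference). Your side remarks on the uniformity of $c_0$ and the countable greedy colouring are also consistent with the paper's argument.
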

\begin{proof}
   First, consider the graph whose vertex set is $I$ and whose edges are $\{i, j\}$ such that $B_i \cap B_j \ne \emptyset$.
   Our first assumption states precisely that this graph has degree bounded by some constant, let's call it $D' < \infty$.
   Then this graph can be colored by $D'+1$ colors using a greedy coloring.
   Hence we get a decomposition $I = \bigsqcup_{\ell=1}^{D' + 1} I_{\ell}$ such that for each fixed $\ell$, for all $i,j \in I_{\ell}$,
   if $i \ne j$ then $B_i \cap B_j = \emptyset$. Moreover, we have
   \begin{align*}
       & \max_{\ell \in \{1,...,D'+1\}} \sum_{i \in I_{\ell}} \Prob( B_i \mbox{ contains a feasible pair for the geodesic } \pi : x \to y) \\
       \ge &\frac{1}{D' + 1} \sum_{i \in I} \Prob( B_i \mbox{ contains a feasible pair for the geodesic } \pi : x \to y) 
       \gessim d(x,y).
   \end{align*}
   Thus we will have our lemma once we show that for each $\ell$
   \[
      \E T(x,y) - \E \tilde{T}(x,y) \gessim \sum_{i \in I_{\ell}} \Prob(B_i \mbox{ contains a feasible pair for the geodesic } \pi : x \to y).
   \]
   But since the $\{B_i\}_{i \in I_{\ell}}$ are disjoint families, this follows immediately from Lemma \ref{lem:feasibletogap}.
\end{proof}

In light of the previous lemma, our strategy for proving our main theorems will be to find suitable subgraphs $B_i$ of $G$
and then prove that the expected number of $B_i$ containing a feasible pair for the $T$-geodesic from 
$x$ to $y$ is at least a constant times $d(x,y)$.

\section{Graphs that admit detours} \label{sec:detours}
Here we introduce and prove facts about the key fine-geometric condition on our graphs.
\begin{defn}
   We say that a graph $G$ \emph{admits detours} if for every $\epsilon > 0$, there exists some $C$ such that
   for every self-avoiding path $\pi$ in $G$ of length $C$, there exists a self-avoiding $\epsilon$-detour $\pi'$ for $\pi$.
\end{defn}
Our main theorems say that at least in certain coarse-geometric settings, this fine-geometric condition on the graph is equivalent to the
vdBK property.
We first give an equivalent condition and show that this condition is \emph{necessary} for a graph to have the vdBK property.
We then give examples of graphs which admit detours.

\subsection{vdBK graphs admit detours}
Recall that a path $\pi$ from $v$ to $w$ is called an \emph{edge-geodesic} if for all paths $\pi'$ from
$v$ to $w$, $|\pi| \le |\pi'|$. $\pi$ is called a \emph{unique (edge)-geodesic} if for all $\pi' \ne \pi$ from $v$ to $w$, $|\pi| < |\pi'|$.

\begin{prop} \label{prop:detourequiv}
   $G$ admits detours if and only if $G$ \emph{admits detours along unique geodesics} in the following sense: for all
   $\epsilon > 0$, there exists $C < \infty$ such that for every unique edge-geodesic $\pi$ of length $C$, there exists 
   an $\epsilon$-detour $\pi'$ for $\pi$.
\end{prop}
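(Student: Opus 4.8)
The plan is to prove the two directions separately, the forward direction being essentially trivial. If $G$ admits detours, then in particular for every $\epsilon > 0$ there is a $C$ such that every self-avoiding path of length $C$ has a self-avoiding $\epsilon$-detour; since a unique edge-geodesic of length $C$ is in particular a self-avoiding path of length $C$, it immediately has an $\epsilon$-detour, which is exactly the statement that $G$ admits detours along unique geodesics. So the content is entirely in the converse.

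For the converse, suppose $G$ admits detours along unique geodesics. Fix $\epsilon > 0$; I need to produce a $C$ so that \emph{every} self-avoiding path $\pi$ of length $C$ has a (self-avoiding) $\epsilon$-detour. The key idea: an arbitrary self-avoiding path $\pi$ need not be geodesic, but I claim that if $\pi$ is \emph{not} a unique edge-geodesic between its endpoints, then a detour comes essentially for free. Indeed, if $\pi$ is not a unique geodesic, there is another path $\pi' \ne \pi$ between the same endpoints with $|\pi'| \le |\pi|$, hence $|\pi'| - |\pi| \le 0 \le \epsilon|\pi \setminus \pi'|$, and by the Proposition right before this statement, $\pi'$ is an $\epsilon$-detour for $\pi$ (we may take $\pi'$ self-avoiding by passing to a self-avoiding subpath, which only shortens it). So the problematic case is when $\pi$ is a unique edge-geodesic of length $C$ — and that is precisely the case handled by the hypothesis. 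Choosing $C = C(\epsilon)$ from the ``admits detours along unique geodesics'' condition then handles \emph{all} self-avoiding paths of that length, completing the argument.

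One subtlety to be careful about is whether the detour produced needs to remain self-avoiding: the definition of ``admits detours'' asks for a self-avoiding $\epsilon$-detour, whereas in the non-geodesic case the witnessing path $\pi'$ might not be self-avoiding. This is easily handled: given any (not necessarily self-avoiding) path $\pi'$ from $x$ to $y$ with $|\pi'| \le |\pi|$, extract a self-avoiding subpath $\pi''$ from $x$ to $y$; then $|\pi''| \le |\pi'| \le |\pi|$, so $\pi''$ is again a valid $\epsilon$-detour by the same Proposition, and it is self-avoiding. If it should happen that $\pi'' = \pi$ as edge sets, then $\pi$ itself was already self-avoiding and we could only have $\pi'' = \pi$ if $\pi$ was shortest; but then since $\pi$ was assumed \emph{not} a unique geodesic there is still a strictly different competitor of equal length, and we repeat with that one — or, more cleanly, we simply note that ``not a unique edge-geodesic'' furnishes a $\pi' \ne \pi$ (as edge sets) with $|\pi'| \le |\pi|$ directly from the definition of unique edge-geodesic, and a self-avoiding subpath of such a $\pi'$ has at most as many edges, and if it coincided with $\pi$ we would contradict $\pi' \ne \pi$ having no more edges than $\pi$ unless $\pi \subsetneq \pi'$, which cannot happen for paths with the same endpoints of no greater length.

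I expect the main (and essentially only) obstacle is this bookkeeping about self-avoidance and the ``$\ne$ as edge sets'' requirement in the detour definition; the structural dichotomy ``$\pi$ is either a unique geodesic or already has a cheap detour'' is the entire mathematical idea, and it is short. No coarse geometry or probability is involved here — this is a purely combinatorial statement about the graph.
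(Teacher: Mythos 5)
Your overall route is the same as the paper's: the forward direction is immediate, and for the converse you split on whether the self-avoiding path $\pi$ of length $C$ is a unique edge-geodesic, handling the non-unique-geodesic case by producing a competitor $\pi' \ne \pi$ with $|\pi'| \le |\pi|$ (the paper takes $\pi'$ to be an edge-geodesic between the endpoints, which has the small advantage of being automatically self-avoiding) and the unique-geodesic case by invoking the hypothesis. Your bookkeeping in the non-geodesic branch, including the check that the extracted self-avoiding subpath is still distinct from $\pi$, is fine.

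There is, however, one genuine (though easily repaired) gap, and it is precisely the subtlety you flag but then resolve only in the branch where it is easier: the hypothesis ``admits detours along unique geodesics'' supplies an $\epsilon$-detour $\gamma$ for a unique geodesic $\pi$ which need \emph{not} be self-avoiding, whereas the conclusion ``admits detours'' demands a \emph{self-avoiding} detour. Your self-avoidance fix (pass to a self-avoiding subpath and reuse $|\pi''| \le |\pi'| \le |\pi|$ together with the preceding proposition) does not transfer to this branch, because the hypothesized detour $\gamma$ may well be longer than $\pi$, so the length comparison is unavailable. What is needed---and what the paper does as its first step in the converse---is the observation that loop-erasing $\gamma$ only removes edges of $\gamma$, hence can only decrease $|\gamma \setminus \pi|$ and only increase $|\pi \setminus \gamma| \ge 1$; consequently the loop-erased path is still distinct from $\pi$ and still satisfies $|\gamma \setminus \pi| \le (1+\epsilon)|\pi \setminus \gamma|$, i.e.\ it is a self-avoiding $\epsilon$-detour for $\pi$. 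With that one additional sentence your argument is complete and coincides with the paper's proof.
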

%
\begin{proof}
   The forward implication is clear. Now assume that $G$ admits detours along unique geodesics.
   Note that the $\epsilon$-detour $\pi'$ for a unique geodesic $\pi$ can be made self-avoiding simply by loop erasing;
   the resulting path is still an $\epsilon$-detour for $\pi$ because the process of loop erasing cannot
   increase $|\pi' \setminus \pi|$ and cannot decrease $|\pi \setminus \pi'| \ge 1$.
   So it only remains to construct self-avoiding $\epsilon$-detours for self-avoiding paths which are not unique geodesics.
   Let $\pi$ be a self-avoiding path which is not a unique geodesic, and let $\pi'$ be an edge-geodesic connecting the endpoints of $\pi$
   which is not equal to $\pi$.
   Since $\pi'$ is a geodesic, we have
   \[
      0 \le |\pi| - |\pi'| = |\pi \setminus \pi'| - |\pi' \setminus \pi|,
   \]
   so that
   \[
      |\pi' \setminus \pi| \le |\pi \setminus \pi'| \le (1+ \epsilon)|\pi \setminus \pi'|
   \]
   for all $\epsilon > 0$.
\end{proof}
We can now easily prove that admitting detours is a \emph{necessary} condition for a graph to be vdBK.

\begin{thm} \label{thm:notvdBK}
   Let $G$ be a graph which does not admit detours.
   Then there exists a sequence of pairs $(x_n,y_n) \in V^2$ with $d(x_n,y_n) \tendsto{n}{\infty} \infty$
   and a pair of atomless measures $\nu, \tilde{\nu}$ with finite mean which are supported away from $0$ 
   and such that $\tilde{\nu}$ is strictly more variable than $\nu$ but
   \[
      \E T(x_n, y_n) = \E \tilde{T}(x_n, y_n)
   \]
   for all $n$.
   In particular, if $G$ has bounded degree, then $G$ does not satisfy the vdBK property.
\end{thm}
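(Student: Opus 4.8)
The plan is to turn the failure of ``admitting detours'' into a family of paths $\pi_n$, with $|\pi_n|\to\infty$, each of which is \emph{forced} to be the $T$-geodesic between its endpoints for \emph{every} weight realization, as long as both weight measures are supported on a short interval bounded away from $0$. Once this is arranged, choosing $\nu$ and $\tilde\nu$ with equal means immediately gives $\E T(x_n,y_n)=\E\tilde T(x_n,y_n)$.

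First I would apply Proposition \ref{prop:detourequiv}: since $G$ does not admit detours, it does not admit detours along unique geodesics, so there is a fixed $\epsilon_0>0$ such that for arbitrarily large $C$ there exists a unique edge-geodesic of length $C$ with no $\epsilon_0$-detour. Choosing such geodesics $\pi_n$ with $|\pi_n|\to\infty$ and letting $x_n,y_n$ be the endpoints of $\pi_n$, we have $d(x_n,y_n)=|\pi_n|\to\infty$. Unwinding the definition of $\epsilon_0$-detour, the statement ``$\pi_n$ has no $\epsilon_0$-detour'' says precisely that every self-avoiding path $\pi'\neq\pi_n$ from $x_n$ to $y_n$ satisfies $|\pi'\setminus\pi_n|>(1+\epsilon_0)\,|\pi_n\setminus\pi'|$; in particular $|\pi_n\setminus\pi'|\ge 1$, since $|\pi_n\setminus\pi'|=0$ would force $\pi_n\subsetneq\pi'$, which is impossible for two distinct self-avoiding paths with the same endpoints.

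Next I would fix $\eta\in(0,\epsilon_0)$ and take $\nu,\tilde\nu$ to be any two atomless probability measures supported on $[1,1+\eta]$ with equal means and with $\tilde\nu$ strictly more variable than $\nu$ — for instance two uniform distributions with a common midpoint and different widths, the wider one being a mean-preserving spread of the narrower (this mean-preserving-spread verification, which yields the increasing-concave inequality defining ``more variable'', is the one computation worth writing out carefully). Such a $\nu$ has finite mean, is supported away from $0$, and is exponential-subcritical whenever $G$ has bounded degree $D$, since then $\vec{\underline{p_c}}\ge\underline{p_c}\ge 1/D>0$ while $\nu$ is atomless. The crux is then a deterministic inequality: for any weight configuration with all weights in $[1,1+\eta]$ and any self-avoiding $\pi'\neq\pi_n$ from $x_n$ to $y_n$,
\[
   T(\pi')-T(\pi_n)=T(\pi'\setminus\pi_n)-T(\pi_n\setminus\pi')\ge |\pi'\setminus\pi_n|-(1+\eta)\,|\pi_n\setminus\pi'|>(\epsilon_0-\eta)\,|\pi_n\setminus\pi'|\ge\epsilon_0-\eta>0,
\]
using $1\le w(e)\le 1+\eta$, the no-detour inequality, and $|\pi_n\setminus\pi'|\ge 1$. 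Hence almost surely $T(x_n,y_n)=T(\pi_n)=\sum_{e\in\pi_n}w(e)$ and likewise $\tilde T(x_n,y_n)=\sum_{e\in\pi_n}\tilde w(e)$, so $\E T(x_n,y_n)=|\pi_n|\int x\,d\nu(x)=|\pi_n|\int x\,d\tilde\nu(x)=\E\tilde T(x_n,y_n)$ for every $n$.

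Finally, if $G$ has bounded degree this exhibits a sequence with $d(x_n,y_n)\to\infty$ along which $\E T-\E\tilde T$ vanishes identically, so $\liminf_{d(x,y)\to\infty}\bigl(\E T(x,y)-\E\tilde T(x,y)\bigr)/d(x,y)\le 0$, contradicting \eqref{eq:strictineq}; thus $G$ is not vdBK. I expect the only mildly delicate step to be the explicit construction of $\nu$ and $\tilde\nu$, since one must simultaneously arrange atomlessness, equal means, bounded support away from $0$, and the strict more-variable relation; the two-uniform-distributions choice makes this routine, but the mean-preserving-spread check should be spelled out. Everything else is bookkeeping around the combinatorial detour inequality displayed above.
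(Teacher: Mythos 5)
Your proposal is correct and follows essentially the same route as the paper: extract unique geodesics without $\epsilon_0$-detours via Proposition \ref{prop:detourequiv}, support both measures on a short interval $[1,1+\eta]$ so that the combinatorial no-detour inequality forces $T(x_n,y_n)=T(\pi_n)$ almost surely, then use equal-mean uniforms (mean-preserving spread) and atomlessness plus $\vec{\underline{p_c}}\ge 1/D$ to contradict the vdBK property. The only cosmetic difference is that the paper takes the support to be $[1,1+\epsilon_0]$ itself (getting $T(\pi')\ge T(\pi_n)$ rather than a strict gap), which is equally sufficient.
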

\begin{proof}
   Since $G$ does not admit detours, by Proposition \ref{prop:detourequiv} there exists $\epsilon_0 > 0$ such that
   for each $n$ we have a unique geodesic $\pi_n$ of length $n$ which
   does not admit a $\epsilon_0$-detour, which is to say that, if $x_n$ and $y_n$ are the endpoints of $\pi_n$, then any other 
   self-avoiding $\pi'_n$ from
   $x_n$ to $y_n$ satisfies
   \[
      |\pi'_n \setminus \pi_n| \ge (1 + \epsilon_0)|\pi_n \setminus \pi'_n|.
   \]
   Note that, canceling a term of $T(\pi_n \cap \pi'_n)$, we always have
   \[
      T(\pi'_n) - T(\pi_n) = T(\pi'_n \setminus \pi_n) - T(\pi_n \setminus \pi'_n).
   \]
   Now assume $\nu$ is supported on $[1,1+\epsilon_0]$. Then for any $\pi'_n \ne \pi_n$ we have
   \begin{align*}
      T(\pi'_n \setminus \pi_n) - T(\pi_n \setminus \pi'_n) &\ge 1 \cdot |\pi'_n \setminus \pi_n| - (1+ \epsilon_0)|\pi_n \setminus \pi'_n| \\
      & \ge(1 + \epsilon_0) |\pi_n \setminus \pi'_n| - (1 + \epsilon_0)|\pi_n \setminus \pi'_n| = 0.
   \end{align*}
   That is, when $\nu$ is supported on $[1, 1+ \epsilon_0]$, $\pi_n$ is almost surely has optimal passage time, that is,
   \[
      T(x_n,y_n) = T(\pi_n) \mbox{ a.s.}
   \]
   But then
   \[
      \E T(x_n,y_n) = \E T(\pi_n) = (\E w) d(x_n,y_n).
   \]
   In particular, if both $\nu$ and $\tilde{\nu}$ are supported on $[1,1+\epsilon_0]$ and $\E w = \E \tilde{w}$, we get
   \[
      \E T(x_n,y_n) = (\E w) d(x_n,y_n) = (\E \tilde{w}) d(x_n,y_n) = \E \tilde{T}(x_n,y_n),
   \]
   so to complete our proof we just need to find two such $\nu, \tilde{\nu}$ such that $\tilde{\nu}$ is strictly more variable than $\nu$.
   For example, we can take $\tilde{\nu}$ to be the uniform measure on $[1, 1+\epsilon_0]$ and $\nu$ to be the uniform measure
   on $[1+(\epsilon_0/4), 1+(3\epsilon_0/4)]$ (see Example 2.17 in \cite{vdBK}).
   Finally, if $G$ has bounded degree, then $\nu(\{1+(\epsilon_0/4)\}) = 0 < 1/D \le \vec{\underline{p_c}}$, so $\nu$ is exponential-subcritical
   and the pair $\nu, \tilde{\nu}$ contradicts the vdBK property.
\end{proof}

\subsection{Examples of graphs which admit detours}
Proposition \ref{prop:detourequiv} gives us an easy way to produce graphs which do not admit detours, namely by ``doubling'' edges.
Simply take any graph $G$ and create a new graph $G'$ by taking the edge set of $G$ and adding an extra edge between each $v, w \in V$ which are
connected by an edge in $G$. Since every edge has a ``parallel'' edge, $G'$ has no unique geodesics, and hence
by Proposition \ref{prop:detourequiv} admits detours.

This is a rather ``cheap'' way to get a graph that admits detours, especially since in first-passage percolation often the graphs one is interested in
are simple, i.e. contain no parallel edges. However, this is a simple way to see that the property of admitting detours is not a quasi-isometry invariant;
every graph $G$ is quasi-isometric to a graph $G'$ which admits detours, so admitting detours is a ``fine'' rather than a ``coarse'' geometric property.

The property is not group-theoretic either; that is, for some groups $\Gamma$, some Cayley graphs of $\Gamma$ admit detours and others do not.
This can be seen using the same technique as above if one allows Cayley graphs to have double edges (for discussion on what exactly is meant
by ``Cayley graph'' see Appendix \ref{app:grouptheory}). But even if one restricts to simple Cayley graphs, there are counterexamples.
The standard Cayley graph of $\Z$ does not admit detours (since it is a tree), but every Cayley graph of $\Z$ not isomorphic to this one does.
Similarly, Cayley graphs of $\Z/2 * \Z/2$ which are isomorphic to the standard Cayley graph of $\Z$ do not admit detours, but all others do. 
This is proven in Appendix \ref{app:grouptheory}.

On the other hand, there are several properties of groups which ensure that \emph{all} of their Cayley graphs admit detours. For instance, we have
\begin{restatable*}{prop}{finitenormalsubgroup} \label{prop:finitenormalsubgroup}
   Let $\Gamma$ be a finitely generated group, and suppose that $\Gamma$ contains $F \unlhd \Gamma$ a nontrivial finite normal subgroup.
   Then any Cayley graph of $\Gamma$ admits detours.
\end{restatable*}

\begin{restatable*}{prop}{withcenter} \label{prop:withcenter}
   Let $\Gamma$ be a finitely generated group not isomorphic to $\Z$ or $\Z/2 * \Z/2 \cong \Z \rtimes \Z/2$ 
   with a finite index subgroup $H$ such that $H$ has nontrivial center. Then any Cayley graph $G$ 
   of $\Gamma$ admits detours.
\end{restatable*}
These allow us to conclude:

\begin{restatable*}{thm}{nilpotentdetours} \label{thm:nilpotentdetours}
   Let $G$ be a Cayley graph of a virtually nilpotent group. If $G$ is not isomorphic as a graph
   to the standard Cayley graph of $\Z$, then $G$ admits detours.
\end{restatable*}

The proofs of all of these facts are entirely combinatorial and group-theoretic, and are deferred to Appendix \ref{app:grouptheory}.
There may be many weaker group-theoretic conditions which ensure that every Cayley graph of a group admits detours;
the ones proven here were mostly chosen in order to prove Theorem \ref{thm:nilpotentdetours}, since this is needed to 
prove Theorem \ref{thm:nilpotentvdBK}.
Of course, they readily apply to many groups which are not virtually nilpotent.

\section{Proof of Theorem \ref{thm:qitree}} \label{sec:qitrees}
In this section, we prove the following:
\qitree*

A metric space which is quasi-isometric to a tree (where the tree is given the usual graph metric) is called a \emph{quasi-tree}.
The following is a well-known equivalent condition for a geodesic metric space to be a quasi-tree
(the original, slightly weaker condition is due to Manning \cite{Manning}; the following extension is a well-known
consequence, see e.g. \cite{BBF}):
\begin{thm}[Manning's bottleneck criterion]
   A geodesic metric space $X$ is a quasi-tree if and only if there exists some $\Delta < \infty$ such that
   for every $x,y \in X$, for every geodesic $[x,y]$ from $x$ to $y$, for every $z \in [x,y]$, any path $\pi$ from $x$ to $y$
   intersects $B(z,\Delta)$.
\end{thm}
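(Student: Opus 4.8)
The plan is to handle the two implications separately: the forward implication is a soft consequence of the stability of quasi-geodesics in trees, whereas the reverse implication is the substantive half and is precisely Manning's theorem. I will also note that for the reverse direction it is enough to use the \emph{a priori} weaker form of the bottleneck hypothesis in which $z$ is only the midpoint of $[x,y]$ (this is Manning's original condition), while the forward direction produces the stronger ``all $z \in [x,y]$'' conclusion directly.

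For the forward direction, suppose $f \colon X \to \mathcal{T}$ is an $(L,A)$-quasi-isometry onto a tree $\mathcal{T}$. Fix $x, y \in X$, a geodesic $[x,y]$, and $z \in [x,y]$. The image $f([x,y])$ is an $(L,A)$-quasi-geodesic in $\mathcal{T}$; since trees are $0$-hyperbolic, the Morse lemma (stability of quasi-geodesics) puts $f([x,y])$ within Hausdorff distance $\delta_0 = \delta_0(L,A)$ of the unique geodesic $[f(x),f(y)]$, so fix $w \in [f(x),f(y)]$ with $d_{\mathcal{T}}(f(z), w) \le \delta_0$. Now let $\pi$ be any path from $x$ to $y$, and choose a chain of points $x = v_0, v_1, \dots, v_n = y$ along $\pi$ with $d_X(v_i, v_{i+1}) \le 1$ (possible by uniform continuity, or immediate if $\pi$ is an edge path); then $d_{\mathcal{T}}(f(v_i), f(v_{i+1})) \le L + A$, so concatenating the geodesics $[f(v_i), f(v_{i+1})]$ yields a genuine path $\hat{\pi}$ in $\mathcal{T}$ from $f(x)$ to $f(y)$. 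Since removing any interior point of $[f(x),f(y)]$ disconnects $f(x)$ from $f(y)$ in the tree, $\hat{\pi}$ must contain $w$; hence $w$ lies within $L+A$ of some $f(v_j)$, so $d_{\mathcal{T}}(f(z), f(v_j)) \le \delta_0 + L + A$, and the quasi-isometry inequality for $f$ converts this into $d_X(z, v_j) \le \Delta$ for a constant $\Delta = \Delta(L,A)$ independent of $x,y,z,\pi$. As $v_j \in \pi$, this is exactly the bottleneck conclusion.

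For the reverse direction — the main obstacle — I would invoke Manning's construction of an approximating tree from the bottleneck data: fix a basepoint $o$, use the bottleneck property to show that the geodesics from $o$ to a given pair of points fellow-travel at scale comparable to $\Delta$ along all but a bounded-length initial portion determined by the pair, and then glue the geodesics emanating from $o$ along their maximal common portions to form an $\R$-tree $\mathcal{T}$ equipped with a natural map $X \to \mathcal{T}$; one finally checks that this map is a quasi-isometry with constants controlled by $\Delta$. The crux is the ``no-reconnection'' estimate — once two geodesics from $o$ have separated by more than a fixed multiple of $\Delta$, their terminal points are forced apart at a definite rate, so that the gluing genuinely yields a tree rather than an object with loops — and this is the only place where the bottleneck hypothesis is used essentially, applied to the two terminal points with the concatenated path through $o$ as the competing path. (Alternatively, one can route through asymptotic cones: the bottleneck property forces every asymptotic cone of $X$ to be an $\R$-tree, and this property characterizes geodesic metric spaces quasi-isometric to a tree.) Since this direction is classical, in the write-up we simply cite \cite{Manning} for the original form and \cite{BBF} for the form stated here rather than reproduce the construction.
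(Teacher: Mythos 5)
The paper does not prove this theorem; it simply cites Manning \cite{Manning} for the original criterion (with $z$ the midpoint) and \cite{BBF} for the strengthened ``all $z \in [x,y]$'' form, so your proposal of citing the hard reverse implication is exactly what the paper does. Your sketch of the easy forward implication — push a geodesic and a competing path forward under the quasi-isometry, use $0$-hyperbolicity of the tree to Morse-stabilize the image of the geodesic, and note that any path in a tree from $f(x)$ to $f(y)$ must pass through every point of the unique geodesic $[f(x),f(y)]$ — is correct and standard; one unnecessary qualification is the phrase ``interior point'' in the disconnection step, since in a tree the image of any path from $f(x)$ to $f(y)$ contains all of $[f(x),f(y)]$, endpoints included, so no separate treatment of the corner case $w \in \{f(x),f(y)\}$ is needed. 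Your remark that the forward direction delivers the stronger ``all $z$'' conclusion while the reverse direction only needs Manning's original midpoint hypothesis is the correct way to see why the two versions are equivalent, and matches the logic implicit in the paper's citation.
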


\begin{cor}
   Let $G = (V,E)$ be a graph which is a quasi-tree. Then there exists $R < \infty$ such that for any $x,y \in V$, for any edge geodesic $[x,y]$ from $x$ to $y$
   and any $z \in V([x,y])$, every path $\pi$ from $x$ to $y$ intersects $E(B(z,R))$.
\end{cor}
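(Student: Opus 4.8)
The plan is to deduce the corollary from Manning's bottleneck criterion by passing to the geometric realization of $G$. Let $|G|$ denote the metric space obtained from $G$ by gluing a unit interval along each edge; since $G$ is connected, $|G|$ is a geodesic metric space, its metric restricts to the graph metric $d$ on the vertex set $V$, and the inclusion $V \hookrightarrow |G|$ is a quasi-isometry (in fact $V$ is $1$-dense in $|G|$). As $G$ is quasi-isometric to a tree, so is $|G|$; being a geodesic quasi-tree, $|G|$ satisfies Manning's bottleneck criterion, so there is $\Delta < \infty$ such that for all $p,q \in |G|$, every geodesic segment $[p,q] \subset |G|$, and every point $w \in [p,q]$, every path in $|G|$ from $p$ to $q$ meets the ball $B_{|G|}(w,\Delta)$.

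Next I would set $R := \lceil \Delta \rceil + 1$ and verify the claim. Fix $x,y \in V$, an edge geodesic $[x,y]$ in $G$, a vertex $z \in V([x,y])$, and a path $\pi$ from $x$ to $y$ in $G$. The edge geodesic realizes in $|G|$ to a path of length $d(x,y)$ from $x$ to $y$; since $d_{|G|}(x,y) = d(x,y)$, this realized path is a geodesic segment, and $z$ is a point of it. The path $\pi$ also realizes to a path in $|G|$ from $x$ to $y$. By Manning's criterion the realized $\pi$ meets $B_{|G|}(z,\Delta)$, so there is a point $\xi$ lying on some edge $e \in \pi$ with $d_{|G|}(\xi,z) \le \Delta$. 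For each endpoint $u$ of $e$ we then have $d(u,z) = d_{|G|}(u,z) \le d_{|G|}(u,\xi) + d_{|G|}(\xi,z) \le 1 + \Delta \le R$, so both endpoints of $e$ lie in $B(z,R)$, i.e. $e \in E(B(z,R))$. Hence $\pi$ (viewed as an edge set) intersects $E(B(z,R))$, as desired.

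I expect no genuine obstacle here: the entire content is the translation between the combinatorial graph and its geometric realization, so that the metric-space formulation of Manning's criterion applies, together with the elementary bookkeeping that a point of approach on the realization is carried by an actual edge both of whose endpoints are then within $\Delta + 1$ of $z$. The only mild point worth stating explicitly is the reading of $E(B(z,R))$ as the edge set of the subgraph of $G$ induced on the ball $B(z,R)$ — which is exactly what the endpoint estimate delivers.
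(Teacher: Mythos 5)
Your proof is correct and follows the same route as the paper: pass to the geometric realization of $G$, apply Manning's bottleneck criterion there, and take $R$ slightly larger than $\Delta$ (the paper uses $R = \Delta + 1$) so that the edge carrying the point of approach has both endpoints in $B(z,R)$. You have simply written out the bookkeeping that the paper leaves as "one quickly sees."
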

Here (and later in the paper), if $S \subset V$, then $E(S) \subset E$ is defined to be the set of edges of $G$ which have both endpoints lying in $S$,
and if $S \subset E$, then $V(S) \subset V$ is defined to be the set of vertices which are an endpoint of an edge in $S$.
\begin{proof}
   $(V,d)$ is naturally a subspace of the geodesic metric space $(G,d)$ given by the geometric realization of $G$ (i.e. the 1-dimensional metric cell complex where each $e \in E$
   corresponds to 1-cell in $G$ isometric to $[0,1]$, joining 0-cells corresponding to the endpoints of $e$). The combinatorial edge-geodesics we study in this paper
   correspond to geodesics in $(G,d)$, and one quickly sees that the corollary holds with $R = \Delta + 1$.
\end{proof}


\begin{proof}[Proof of Theorem \ref{thm:qitree}]
   We only need to prove that, if $G$ admits detours, then we have $\E \tilde{T} \ll \E T$ whenever $\tilde{\nu}$ is strictly
   more variable than $\nu$, since the other direction is given by
   Theorem \ref{thm:notvdBK}.
   
   To this end, let $\nu, \tilde{\nu}$ have finite mean with $\tilde{\nu}$ strictly more variable than $\nu$ and first assume that
   \eqref{eq:extraassumption} holds. 
   Then let $\epsilon>0, I_0$ be given
   as in Lemma \ref{lem:technicallemma}.
   Since $G$ admits detours, there is some $C$ such that every self-avoiding path $\pi$ of length $C$ admits a self-avoiding $\epsilon$-detour
   (which is necessarily of length at most $(1+\epsilon)C$).
   Since $G$ is a quasi-tree, take $R < \infty$ such that for all $x,y \in V$, for any geodesic $[x,y]$ from $x$ to $y$,
   any path $\pi: x \to y$ intersects $E(B(z,R))$ for all $z \in V([x,y])$.
   
   Now, define the family $\{ B_v := B(v,R+C(2+\epsilon)) : v \in V \}$.
   First we claim that for any $v$,
   \begin{align*}
      &\Prob( B_v \mbox{ contains a feasible pair for the geodesic } \pi:x \to y ) \\
      \ge &\Prob( \pi \mbox{ visits } B(v,R) \mbox{ and leaves } B_v, w(e) \in I_0 \mbox{ for all } e \in E(B_v)).
   \end{align*}
   To see this, note that if $\pi$ visits $B(v,R)$ and exits $B_v$, there is a segment $\alpha$ of $\pi$ of length at least $C$
   contained in $B(v,R+C)$; this segment admits a self-avoiding $\epsilon$-detour $\gamma$ contained in $B(v,R+C(2+\epsilon))$.
   Then, if also $w(e) \in I_0$ for all $e \in E(B_v)$, $(\alpha,\gamma)$ forms a feasible pair.
   
   Next note that if $v \in V([x,y]) \setminus B_y$ then \emph{any} path from $x$ to $y$ visits $B(v,R)$ and exits $B_v$,
   and so for such $v$ we have
   \begin{align*}
      &\Prob( B_v \mbox{ contains a feasible pair for the geodesic } \pi:x \to y ) \\
      \ge &\Prob( w(e) \in I_0 \mbox{ for all } e \in E(B_v)) = \nu(I_0)^{|E(B_v)|} \ge (\nu(I_0))^{(D+1)^{R+C(2+\epsilon)+1}} =: c
   \end{align*}
   where $D$ is the maximum degree of $G$.
   Therefore for $d(x,y)$ sufficiently large we have
   \begin{align*}
      &\sum_{v \in V} \Prob(B_v \mbox{ contains a feasible pair for the geodesic } \pi:x \to y ) \\ 
      \ge &\sum_{v \in V([x,y]) \setminus B_y} \Prob( w(e) \in I_0 \mbox{ for all } e \in E(B_v)) \\
      \ge &|V([x,y]) \setminus B_y| c \ge c d(x,y) - c(D+1)^{R+C(2+\epsilon)} \gessim d(x,y).
   \end{align*}
   Moreover, we have that
   \[
      \sup_v \# \{ w : B_w \cap B_v \ne \emptyset \} \le \sup_v |B(v,2(R + C(2 + \epsilon)))| \le (D+1)^{2(R + C(2 + \epsilon))} < \infty,
   \]
   and so by Lemma \ref{lem:feasibletovdBK} we have that
   \[
      \liminf_{d(x,y) \to \infty} \frac{ \E T(x,y) - \E \tilde{T}(x,y) }{d(x,y)} > 0,
   \]
   as desired.
   
   On the other hand, if $w$ and $\tilde{w}$ do not satisfy \eqref{eq:extraassumption}, then take $\bar{w}$ as in Lemma \ref{lem:wlog};
   applying our above argument to $\bar{w}$ gives
   \[
            \liminf_{d(x,y) \to \infty} \frac{ \E T(x,y) - \E \tilde{T}(x,y) }{d(x,y)} \ge  
            \liminf_{d(x,y) \to \infty} \frac{ \E T(x,y) - \E \bar{T}(x,y) }{d(x,y)}> 0,
   \]
   and so we are done.
\end{proof}

As a corollary we also obtain Theorem \ref{thm:virtfree}:
\begin{proof}[Proof of Theorem \ref{thm:virtfree}]
   Let $\Gamma$ be a virtually free group. Since free groups have Cayley graphs which are regular trees, any
   Cayley graph of $\Gamma$ is quasi-isometric to a regular tree, and so by Theorem \ref{thm:qitree} a Cayley
   graph of $\Gamma$ is vdBK if and only if it admits detours.
   If $\Gamma$ has a finite index subgroup with nontrivial center and is not isomorphic to $\Z$ or $\Z/2 * \Z/2$, 
   then by Proposition \ref{prop:withcenter},
   all its Cayley graphs admit detours.
   If $\Gamma$ has a finite normal subgroup, then by Proposition \ref{prop:finitenormalsubgroup},
   all its Cayley graphs admit detours; hence under either condition all Cayley graphs of $\Gamma$ are vdBK.
\end{proof}

\section{Proof of Theorem \ref{thm:polygrowthvdBK}} \label{sec:polygrowth}
We say that a graph has \emph{strict polynomial growth} if there exists some $0<d<\infty$ and some $0<c_1 \le C_1<\infty$ such that
for all $R \ge 1$,
\[
   c_1R^d \le \inf_{v \in V} |B(v,R)| \le \sup_{v \in V} |B(v,R) \le C_1R^d.
\]
(Note that this in particular entails that $G$ has bounded degree).
It is well known that Cayley graphs of finitely generated virtually nilpotent groups are of strict polynomial growth;
in fact, $|B(R)|/R^d$ converges to a constant as $R \to \infty$ \cite{Pansu}.
Moreover, it is clear that half-planes, sectors, and many other subgraphs of the standard Cayley graph of $\Z^d$
have strict polynomial growth.

The goal of this section is to prove the following:
\polygrowthvdBK*

\noindent
Given Theorem \ref{thm:polygrowthvdBK}, we can quickly prove Theorem \ref{thm:nilpotentvdBK} as follows:
\begin{proof}[Proof of Theorem \ref{thm:nilpotentvdBK} given Theorem \ref{thm:polygrowthvdBK}] 
   Let $\Gamma$ be a virtually nilpotent group. Then any Cayley graph for $\Gamma$ has strict polynomial growth \cite{Pansu}.
   If a Cayley graph $G$ of $\Gamma$ is not isomorphic as a graph to the 
   standard Cayley graph of $\Z$, then $G$ admits detours, by Theorem \ref{thm:nilpotentdetours}.
   So Theorem \ref{thm:polygrowthvdBK} implies that $G$ is vdBK.
\end{proof}
 
\subsection{A Peierls argument for graphs of strict polynomial growth}
In the previous case, where $G$ was quasi-isometric to a tree, we benefitted from the fact that we could find
areas which the geodesic visited with probability 1, and hence for such an area $A$ and any event $C$,
$\Prob(\{\pi \mbox{ visits } A \} \cap C) = \Prob(C)$.
Here we have to deal with graphs which may have many paths with mostly disjoint support between each pair of points,
and so a generic event $C$ will not have $\Prob(\{\pi \mbox{ visits } A \} \cap C) = \Prob(C)$.
In fact, there are very few events $C$ for which we can get nontrivial inequalities for $\Prob(\{\pi \mbox{ visits } A \} \cap C)$.
Therefore, a key tool will be the following lemma, which will ensure that, for certain types of local events,
the geodesic will in expectation visit $\gessim d(x,y)$ many regions where the prescribed events hold.
This is a Peierls-type argument, a special case of which was used in \cite{vdBK}.

\begin{lemma} \label{lem:peierls}
   Let $G = (V,E)$ be a graph. Suppose that for each sufficiently large $R < \infty$ we have the following:
      \begin{itemize}
         \item A partition $V = \bigsqcup_i B_i^R$, with $\sup_i \diam (B_i^R) < \infty$.
         \item A collection of subsets $\tilde{B}_i^R \subset E$ indexed by the same index set as the $B_i^R$.
         \item A collection of events $A_i^R$, each of which depends only upon the weights of the edges in $\tilde{B}_i^R$.
      \end{itemize}
   For each $R$ then construct the simple graph $G^R$ whose vertex set is $\{B_i^R\}_i$ and is such that two distinct vertices
   $B_i^R$ and $B_j^R$ are joined by an edge if and only if there is an edge in the original graph with one endpoint lying
   in $B_i^R$ and the other lying in $B_j^R$.
   Also construct the simple graph $\tilde{G}^R$ with the same vertex set which is such that two distinct vertices
   $B_i^R$ and $B_j^R$ are joined by an edge if and only if $\tilde{B}_i^R \cap \tilde{B}_j^R \ne \emptyset$.
   Suppose that there exists $D < \infty$ such that for all $R$, the degree of both $G^R$ and $\tilde{G}^R$
   is bounded by $D$. Suppose also that
   \[
      \rho(R) := \sup_i \Prob \left( (A_i^R)^c \right) \tendsto{R}{\infty} 0.
   \]
   Then, for all sufficiently large $R$, there exist $c_2(R), \epsilon_2(R) > 0$ such that for all sufficiently large $d(x,y)$,
   \[
      \Prob \left( 
      \begin{array}{c} \exists \gamma:x \to y \mbox{ visiting at most } c_2 d(x,y) \mbox{ distinct } \\
                              B_i^R \mbox{ such that } A_i^R \mbox{ holds }  \end{array}
                \right)
                \le e^{-\epsilon_2 d(x,y)}
   \]
\end{lemma}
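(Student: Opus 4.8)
The plan is to run a contour/Peierls union bound over self-avoiding paths in the ``block graph'' $G^R$. Fix $R$ large and write $\Delta_0 := \sup_i \diam(B_i^R) < \infty$; call a block $B_i^R$ \emph{good} if $A_i^R$ holds and \emph{bad} otherwise. First I would record a purely deterministic reduction. Any path $\gamma : x \to y$ in $G$ visits a set $S(\gamma)$ of blocks which is connected in $G^R$ and contains $B(x) \ni x$ and $B(y) \ni y$ (consecutive vertices of $\gamma$ lie in equal or $G^R$-adjacent blocks). Inside the connected subgraph $G^R[S(\gamma)]$ choose a self-avoiding $G^R$-path $P$ from $B(x)$ to $B(y)$; every good block on $P$ is a good block visited by $\gamma$, so if $\gamma$ visits at most $c_2 d(x,y)$ good blocks then $P$ has at most $c_2 d(x,y)$ good blocks among its $\ell+1$ vertices, where $\ell := |P|$. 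Moreover, lifting a $G^R$-path to a $G$-path by concatenating within-block connections (each of $G$-length $\le \Delta_0$, since blocks have diameter $\le\Delta_0$) and the single edges between consecutive blocks shows $d(x,y) \le \ell(\Delta_0+1) + \Delta_0$, hence $\ell \ge d_{G^{R}}(B(x),B(y)) \ge (d(x,y)-\Delta_0)/(\Delta_0+1) =: L'$. Thus the bad event in the lemma is contained in the event that there is a self-avoiding $G^R$-path $P$ starting at $B(x)$, of some length $\ell \ge \lceil L' \rceil$, with at most $c_2 d(x,y)$ good blocks among its $\ell+1$ vertices.

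Next I would choose $c_2 = c_2(R)$ small enough — for instance $c_2 := \tfrac{1}{4(\Delta_0+1)}$ — that for every $\ell \ge L'$ one has $\ell+1 - c_2 d(x,y) \ge \ell/2$, so that any candidate $P$ must contain at least $\ell/2$ bad blocks. Now fix a self-avoiding $G^R$-path $P$ of length $\ell$. Since $\tilde G^R$ has degree $\le D$, a greedy coloring partitions the $\ell+1$ vertices of $P$ into at most $D+1$ classes each of which is independent in $\tilde G^R$; within such a class the events $\{(A_i^R)^c\}$ are genuinely independent (the relevant edge sets $\tilde B_i^R$ are pairwise disjoint and $A_i^R$ is measurable with respect to the weights in $\tilde B_i^R$) and each has probability $\le \rho(R)$. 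By pigeonhole some class contains at least $\ell/(2(D+1))$ bad blocks, so a union bound over the $\le D+1$ classes and then over which blocks of a class are bad gives
\[
 \Prob(P \text{ has} \ge \ell/2 \text{ bad blocks}) \le (D+1)\, 2^{\ell+1}\, \rho(R)^{\ell/(2(D+1))}.
\]
Since $G^R$ is simple of degree $\le D$, the number of self-avoiding $G^R$-paths of length $\ell$ starting at $B(x)$ is at most $D^\ell$.

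Finally I would assemble the union bound:
\[
 \Prob(\text{bad event}) \le \sum_{\ell \ge \lceil L'\rceil} D^\ell (D+1)\,2^{\ell+1}\,\rho(R)^{\ell/(2(D+1))}
 = 2(D+1)\sum_{\ell \ge \lceil L'\rceil}\big(2D\,\rho(R)^{1/(2(D+1))}\big)^{\ell}.
\]
Because $\rho(R) \to 0$, for all $R$ sufficiently large the geometric ratio $q(R) := 2D\,\rho(R)^{1/(2(D+1))}$ is $< 1/2$, and then the sum is at most $4(D+1)\,2^{-L'}$; recalling $L' = (d(x,y)-\Delta_0)/(\Delta_0+1)$, this is at most $e^{-\epsilon_2 d(x,y)}$ once $d(x,y)$ is large, with e.g. $\epsilon_2 = \epsilon_2(R) := \tfrac{\ln 2}{2(\Delta_0+1)}$. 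This is the asserted estimate.

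The step I expect to be the real content is the deterministic reduction in the first paragraph. The issue is that an arbitrary path $\gamma$ may visit unboundedly many blocks, so one cannot afford a naive union bound over the exponentially-many configurations $S(\gamma)$ unless the number of \emph{bad} blocks forced grows proportionally to the size of the witness. Passing to a self-avoiding path $P$ in $G^R$ between $B(x)$ and $B(y)$ is what ties the combinatorial entropy ($D^\ell$ candidates) and the probabilistic gain ($\rho(R)^{\Theta(\ell)}$, since $\gtrsim \ell$ blocks of $P$ are forced bad) to the same exponential scale $\ell$, while the lower bound $\ell \ge L' \gtrsim d(x,y)$ coming from the bounded block diameters converts this into decay in $d(x,y)$. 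The dependence among the events $A_i^R$ is a secondary nuisance, handled cleanly by the bounded-degree coloring of $\tilde G^R$.
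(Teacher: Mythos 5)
Your proof is correct and follows essentially the same route as the paper's proof of Lemma \ref{lem:peierls}: project to self-avoiding paths in the block graph $G^R$, force $\gtrsim \ell$ bad blocks along any candidate path of length $\ell \gtrsim d(x,y)$, extract genuine independence from the bounded degree of $\tilde G^R$, and close with a union bound over the at most $D^\ell$ paths and a geometric series that converges once $\rho(R)$ is small. The only differences are cosmetic: you fix $c_2$ in advance via the diameter bound and use a coloring-plus-pigeonhole step, where the paper chooses $c_2$ at the end by comparing exponents and passes to a maximal independent subset of the bad set.
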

\begin{rmk} \label{rmk:strongerpeierls}
   From the proof it will be clear that the following weaker condition is sufficient: let $D(R)$ be the maximum degree of $G^R$
   and let $\tilde{D}(R)$ be the maximum degree of $\tilde{G}^R$. Then for each $R$ such that
   \[
      (2D(R) \rho(R)^{\frac{1}{\tilde{D}(R) + 1}}) < 1,
   \]
   we have the desired bound.
   For instance, since in our later geometric constructions we can ensure $\rho(R) \le e^{-cR}$,
   this allows us to extend Theorem \ref{thm:polygrowthvdBK} from graphs
   of strict polynomial growth to graphs with $R^{d'} \lessim |B(R)| \lessim R^d$ with $d-d'<1$, since
   in that case our Voronoi construction below will give $\tilde{D}(R), D(R) \lessim R^{d-d'} = o(R)$, but it
   is difficult to come up with an example of a graph which has such a property but is not already of strict polynomial growth.
\end{rmk}
\begin{proof}
   Let $\gamma$ be a path from $x$ to $y$. This induces a path $\tilde{\gamma}$ in $G^R$ in a natural way: 
   $\tilde{\gamma}$ starts at the unique $B_{i_1}^R$ containing $x$, and each time $\gamma$ crosses an edge
   from a vertex in some $B_i^R$ to a vertex in some distinct $B_{i'}^R$, $\tilde{\gamma}$ crosses an 
   edge from $B_i^R$ to $B_{i'}^R$
   Note that since the diameter of the $B_i^R$ is bounded uniformly in $i$, there exists $\eta(R) > 0$ such
   that if $\gamma: x \to y$, then $\tilde{\gamma}$ visits at least $\eta d(x,y)$ distinct $B_i^R$.
   We want to bound the probability that (for some $c_2(R)$ to be chosen later) 
   some such $\tilde{\gamma}$ visits at most $c_2 d(x,y)$ $B_i^R$
   such that $A_i^R$ holds. First, note that if $\tilde{\gamma}$ visits at most $c_2 d(x,y)$ $B_i^R$ such
   that $A_i^R$ holds, a self-avoiding path obtained from $\tilde{\gamma}$ from erasing loops has the same property.
   So it suffices to bound the probability that some self-avoiding path $\tilde{\gamma}$ in $G^R$ which starts
   at $B_{i_1}^R \ni x$ and ends at $B_{i_2}^R \ni y$ visits at most $c_2 d(x,y)$ $B_i^R$ such that $A_i^R$ holds;
   to reduce clutter, let us write $B_i$ and $A_i$ instead of $B_i^R$ and $A_i^R$.
   
   Now, for a fixed self-avoiding path $\tilde{\gamma}$ visiting $k$ distinct $B_i$, we have
   \begin{align*}
      \Prob \left( \begin{array}{c} \tilde{\gamma} \mbox{ visits at most } c_2 d(x,y) \\
                B_i \mbox{ such that } A_i \mbox{ holds } \end{array} \right)
                \le \sum_{S \subset V(\tilde{\gamma}), |S| = k - c_2 d(x,y)} \Prob \left( \bigcap_{B_i \in S} A_i^c \right).
   \end{align*}
   Each such $S \subset \{ B_i \}_i$ contains a subset $S'$ which is independent in $\tilde{G}^R$ (that is,
   no two elements of $S'$ are joined by an edge of $\tilde{G}^R$) and which has size at least $|S'| \ge \frac{1}{D + 1}|S|$.
   From the definition of $\tilde{G}^R$ we see that if $S'$ is an independent set in $\tilde{G}^R$ then the collection
   of events $\{A_i^R\}_{B_i^R \in S'}$ is independent. Hence the above is bounded by
   \begin{align*}
      \sum_{\substack{S \subset V(\tilde{\gamma}), \\ |S| = k - c_2 d(x,y)}} \Prob \left( \bigcap_{B_i \in S'} A_i^c \right) 
      = \sum_{\substack{S \subset V(\tilde{\gamma}), \\ |S| = k - c_2 d(x,y)}} \prod_{B_i \in S'} \Prob(A_i^c) 
      \le {k \choose c_2 d(x,y)} \rho^{\frac{k - c_2 d(x,y)}{D + 1}}.
   \end{align*}
   On the other hand, the number of self-avoiding paths of length $k$ in $G^R$ starting at the unique $B_{i_1} \ni x$
   is at most $D^k$. Thus we have
   \begin{align*}
      \Prob \left( 
      \begin{array}{c} \exists \gamma:x \to y \mbox{ visiting at most } c_2 d(x,y) \mbox{ distinct } \\
                              B_i^R \mbox{ such that } A_i^R \mbox{ holds }  \end{array}
                \right)
                &\le \sum_{k = \lceil \eta d(x,y) \rceil}^{\infty} D^k {k \choose c_2 d(x,y)} \left(\rho^{\frac{1}{D + 1}}\right)^{k - c_2 d(x,y)} \\
      &\le \left( \rho^{\frac{1}{D + 1}} \right)^{-c_2 d(x,y)} \sum_{k=\lceil \eta d(x,y) \rceil}^{\infty} \left( 2D \rho^{\frac{1}{D+1}} \right)^k;
   \end{align*}
   for $R$ sufficiently large we have $2D \rho^{\frac{1}{D + 1}} < 1$, and then the right hand side above is equal to 
   \[
      \left( \rho^{\frac{1}{D + 1}} \right)^{-c_2 d(x,y)} \left( 2D \rho^{\frac{1}{D+1}} \right)^{\lceil \eta d(x,y) \rceil} \cdot \frac{1}{1 - 2D\rho^{\frac{1}{D+1}}}.
   \]
   If we choose $c_2 > 0$ sufficiently small that
   \[
      -c_2 \log (\rho^{1/(D+1)}) + \eta \log ( 2D \rho^{1/(D+1)} ) > 0
   \]
   then our upper bound decays exponentially in $d(x,y)$, and so we are done.
\end{proof}

Now, assuming that $G$ has strict polynomial growth, we construct a suitable family $\{B_i\}, \{\tilde{B}_i\}$.
First, for each $R$, choose a maximal subset $\{ o^R_i \}_i \subset V$ which is $R$-separated, that is, such that
if $i \ne j$, then $d(o_i,o_j) \ge R$. Also fix an arbitrary well-ordering on the indices $i$.
Maximality implies that for each vertex $v \in V$,
there exists some $i$ such that $d(o_i,v) \le R$. For each $i$, let $B_i$ be the ``Voronoi tile'' containing $o_i$,
that is, set 
\[
   B_i := \{ v \in V : d(o_i,v) < d(o_j,v) \mbox{ for all } j < i, d(o_i,v) \le d(o_j,v) \mbox{ for all } j \ge i \}.
\]
($B_i$ consists of the vertices which are closer to $o_i$ than any other $o_j$, but we ``break ties'' when $v$
is equidistant from $o_i$ and $o_j$ using the ordering on indices).
We see that $V = \bigsqcup_i B_i$ and that $\sup_i \diam B_i \le 2R$ (since each $B_i \subset B(o_i,R)$).
Next, we fix $0 < \Sigma < \infty$ (a scaling parameter that will be chosen to suit our separate constructions below).
We have the following:
\begin{prop} \label{prop:voronoiworks}
   Taking $B_i$ to be the Voronoi tiles and $\tilde{B}_i := E(B(o_i, \Sigma R))$ gives families satisfying the hypotheses of 
   Lemma \ref{lem:peierls}. That is, the associated sequence of graphs $G^R$, $\tilde{G}^R$ both have degree uniformly bounded in $R$.
\end{prop}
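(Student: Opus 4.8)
The plan is to establish the two degree bounds — on $G^R$ and on $\tilde{G}^R$ — by the same elementary packing argument, using only the two-sided estimate $c_1 R^d \le |B(v,R)| \le C_1 R^d$ of strict polynomial growth together with the fact that the centers $\{o_i^R\}_i$ were chosen to be $R$-separated. The remaining hypotheses of Lemma \ref{lem:peierls} (that $V = \bigsqcup_i B_i^R$ with $\sup_i \diam B_i^R < \infty$, and that the $A_i^R$ depend only on the weights in $\tilde{B}_i^R$) are immediate or are supplied by whoever invokes the proposition, so only the uniform degree bounds require proof.

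\textbf{Step 1: the degree of $G^R$.} Suppose $B_i^R$ and $B_j^R$ are joined in $G^R$, so $G$ has an edge with one endpoint $u \in B_i^R$ and the other $v \in B_j^R$. Since $B_i^R \subset B(o_i, R)$ and $B_j^R \subset B(o_j, R)$, the triangle inequality gives $d(o_i, o_j) \le d(o_i, u) + 1 + d(v, o_j) \le 2R + 1$, so every center $o_j$ of a $G^R$-neighbor of $B_i^R$, together with $o_i$ itself, lies in $B(o_i, 2R+1)$. Set $r := \lfloor (R-1)/2 \rfloor$, so that $2r \le R-1$ and, for $R$ large, $r \ge 1$; because the $o_j$ are $R$-separated, the balls $\{B(o_j, r)\}$ ranging over these centers are pairwise disjoint and all contained in $B(o_i, 2R+1+r)$. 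Comparing volumes,
\[
   \big(\deg_{G^R} B_i^R + 1\big)\, c_1 r^d \le \sum_j |B(o_j, r)| \le |B(o_i, 2R+1+r)| \le C_1 (2R+1+r)^d ,
\]
whence $\deg_{G^R} B_i^R \le \tfrac{C_1}{c_1}\left(\tfrac{2R+1+r}{r}\right)^d$, which for all large $R$ is bounded by a constant independent of $R$ and $i$ (the ratio $(2R+1+r)/r$ stays bounded, e.g.\ by $7$).

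\textbf{Step 2: the degree of $\tilde{G}^R$, and conclusion.} Suppose $\tilde{B}_i^R \cap \tilde{B}_j^R \ne \emptyset$, i.e.\ $E(B(o_i,\Sigma R))$ and $E(B(o_j,\Sigma R))$ share an edge $e$; choosing an endpoint $u$ of $e$ gives $d(o_i, o_j) \le d(o_i,u) + d(u,o_j) \le 2\Sigma R$. Thus every center of a $\tilde{G}^R$-neighbor of $\tilde{B}_i^R$ lies in $B(o_i, 2\Sigma R)$, and exactly as before the balls $B(o_j, r)$ (same $r = \lfloor (R-1)/2\rfloor$) are disjoint and contained in $B(o_i, 2\Sigma R + r)$, giving $\deg_{\tilde{G}^R} \tilde{B}_i^R \le \tfrac{C_1}{c_1}\left(\tfrac{2\Sigma R + r}{r}\right)^d$, again bounded uniformly in $R$ and $i$ (the bound now depending also on $\Sigma$). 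Since Lemma \ref{lem:peierls} only asks for its hypotheses to hold for all sufficiently large $R$, taking $D$ to be the larger of the two constants above completes the proof.

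\textbf{Anticipated difficulty.} There is essentially no obstacle: the entire content is the volume-packing estimate. The one point needing a moment's care is the choice of the auxiliary radius $r$ — it must be small enough that $R$-separated centers yield genuinely disjoint balls (hence $r = \lfloor (R-1)/2 \rfloor$ rather than $R/2$) yet at least $1$ so that the lower growth bound $c_1 r^d$ applies — and both requirements are met by restricting to large $R$, which is harmless here.
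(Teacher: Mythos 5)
Your proof is correct and follows essentially the same route as the paper: both reduce the two degree bounds to a volume-packing comparison inside a ball of radius $O(R)$ (respectively $O(\Sigma R)$), using the $R$-separation of the centers together with the two-sided strict polynomial growth bounds. The only cosmetic difference is that the paper packs the disjoint Voronoi tiles themselves (each containing $B(o_j,(R/2)-1)$ by $R$-separation), whereas you pack the disjoint balls $B(o_j,\lfloor (R-1)/2\rfloor)$ directly; the estimates are identical in substance.
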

\begin{proof}
   Fixing some $o_i$, we have
   \begin{align*}
      \{ j : B_j \sim B_i \mbox{ in } G^R \} \subset \{ j : d(o_i,o_j) \le 2R + 1 \} \subset \{ j : B_j \subset B(o_i,3R+1) \},
   \end{align*}
   as well as
   \begin{align*}
      \{j : B_j \sim B_i \mbox{ in } \tilde{G}^R \} \subset \{ j : d(o_i,o_j) \le 2\Sigma R \} \subset \{ j : B_j \subset B(o_i, (2\Sigma + 1)R \}.
   \end{align*}
   Thus in order to bound both degrees it suffices to show that, given any constant $\Sigma'$, the quantity
   \[
      \# \{ j : B_j \subset B(o_i, \Sigma' R) \}
   \]
   is uniformly bounded in both $i$ and $R$. To this end, note that, since $o_i$ is $R$-separated, it follows that
   $B(o_i, (R/2) - 1) \subset B_i$. So using our volume bounds and the fact that the $B_j$ are disjoint we have
   \begin{align*}
      \# \{ j : B_j \subset B(o_i, \Sigma' R) \} c_1[(R/2)-1]^d \le \sum_{B_j \subset B(o_i, \Sigma' R)} |B_j|
      \le |B(o_i, \Sigma' R)| \le C_1(\Sigma' R)^d,
   \end{align*}
   so that
   \[
      \# \{ j : B_j \subset B(o_i, \Sigma' R) \} \le \frac{C_1(\Sigma' R)^d}{c_1[(R/2)-1]^d} \tendsto{R}{\infty} \frac{C_1}{c_1}(2 \Sigma')^d,
   \]
   so we are done.
\end{proof}
\begin{rmk}
   This is actually the only point in the proof where we use strict polynomial growth. In every other part of the proof,
   we will only use that we have a uniform strictly subexponential volume bound and bounded degree (which
   is equivalent to a uniform bound on $|B(v,1)|$).
   If one could find a suitable family for more general subexponential growth graphs, the methods in this paper would
   immediately show that such graphs are vdBK if and only if they admit detours.
   However, constructing such a family would take some ingenuity; if for instance we attempted to do the Voronoi
   construction for a graph with growth of order $e^{\sqrt{R}}$, the degree bounds we get from the above analysis
   are superpolynomial, and even using the stronger form of Lemma \ref{lem:peierls} (see Remark \ref{rmk:strongerpeierls})
   will require at least a strictly sublinear bound on the degree of $\tilde{G}^R$ to use our geometric constructions below,
   where the failure probabilities have order $-\log \rho(R) \sim R$.
\end{rmk}

Lastly, let us use these lemmata to prove the following, which will be very important to our later constructions.
\begin{lemma} \label{lem:bddawayfrominf}
   Let $G$ be a graph with strict polynomial growth, and suppose that $\nu$ is exponential-subcritical.
   Then, there exist $q, c>0$ such that, for all $x,y \in V$ with $d(x,y)$ sufficiently large, 
   \[
      \Prob( T(x,y) < (\inf + q)d(x,y) ) \le e^{-cd(x,y)}.
   \]
\end{lemma}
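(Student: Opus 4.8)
The plan is to reduce the statement to a large-deviation bound for subcritical (geodesic) percolation via a union bound over paths, splitting into the two cases of exponential-subcriticality. First suppose $\inf := \inf \supp \nu = 0$, so that $\nu(\{0\}) < \underline{p_c}$; we must show $\Prob(T(x,y) < q\, d(x,y)) \le e^{-c d(x,y)}$ for some $q, c > 0$. The key observation is that if $T(x,y) < q\, d(x,y)$, then there is a self-avoiding path $\gamma : x \to y$ with $T(\gamma) < q\, d(x,y)$; since $|\gamma| \ge d(x,y)$, on such a path the number of edges with weight exceeding some threshold $\theta > 0$ is at most $q\, d(x,y)/\theta$, so at least $(1 - q/\theta) d(x,y)$ of its edges have weight $\le \theta$. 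Choosing $\theta$ small enough that $p := \nu([0,\theta]) < \underline{p_c}$ and $q$ small enough that $1 - q/\theta$ is close to $1$, the event in question implies that the subgraph $\{e : w(e) \le \theta\}$ (which is distributed as $G_p$ with $p < \underline{p_c}$) contains a path from $x$ to a vertex at distance $\ge (1 - q/\theta) d(x,y)$ from $x$. By the definition of $\underline{p_c}$, the probability of such a long connection decays exponentially in $d(x,y)$, which is exactly the desired bound; here we use that the $\limsup$ in the definition of $\underline{p_c}$ is uniform in the basepoint $o \in V$.

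The case $\inf > 0$ is analogous but uses the \emph{geodesic} percolation threshold $\vec{\underline{p_c}}$ and requires slightly more care, since here the naive bound would only give $T(x,y) \ge \inf \cdot d(x,y)$, and we need to gain the extra $q\, d(x,y)$. The idea: if $T(x,y) < (\inf + q) d(x,y)$, there is a self-avoiding $\gamma : x \to y$ with $T(\gamma) < (\inf + q) d(x,y)$; writing $|\gamma| = d(x,y) + m$ where $m \ge 0$ counts the ``excess'' length, we have $T(\gamma) \ge \inf(d(x,y) + m)$, forcing $\inf \cdot m < q\, d(x,y)$, i.e. $m \le (q/\inf) d(x,y)$, so $\gamma$ is ``almost geodesic.'' Among its $d(x,y) + m$ edges, the number carrying weight $> \inf + \theta$ is at most $T(\gamma)/(\inf+\theta) - (\text{something})$; more precisely, $\sum_{e \in \gamma} (w(e) - \inf) < q\, d(x,y)$, so the number of edges with $w(e) > \inf + \theta$ is at most $q\, d(x,y)/\theta$. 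Hence $\gamma$ contains a sub-path that is geodesic in $G$ (or can be completed to one) and of length $\gtrsim d(x,y)$, all of whose edges have weight in $[\inf, \inf + \theta]$ — an event governed by $\vec{\underline{p_c}}$. Choosing $\theta$ so that $\nu([\inf, \inf+\theta]) < \vec{\underline{p_c}}$ (possible since $\nu(\{\inf\}) < \vec{\underline{p_c}}$ and measures are right-continuous) and $q$ correspondingly small, the definition of $\vec{\underline{p_c}}$ gives the exponential bound.

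The main obstacle is the bookkeeping in the second case: one must carefully extract from an ``almost geodesic, almost cheap'' path $\gamma$ an honest long \emph{geodesic} sub-segment on which all weights lie in the good window $[\inf, \inf+\theta]$, so that the event can be compared to a geodesic percolation event at parameter below $\vec{\underline{p_c}}$. The delicate point is that $\gamma$ itself need not be geodesic, so one should either (i) pass to a geodesic sub-path between two vertices of $\gamma$ that are far apart and argue that removing the few ``expensive'' edges and the few ``excess'' detour edges still leaves such a long geodesic sub-path, or (ii) replace $\gamma$ between suitable waypoints by an actual $G$-geodesic and control the weights there using a slightly larger threshold. Once the reduction to $G_p$ (resp. geodesic $G_p$) with $p$ strictly below the relevant threshold is in place, the exponential decay is immediate from the definitions, uniformly in $x$, and a routine optimization over $\theta$ and $q$ finishes the proof. (The cleanest packaging may be to prove the two cases by a single argument using the thresholds $\underline{p_c}$ or $\vec{\underline{p_c}}$ as appropriate, noting that in the first case $\inf = 0$ so ``geodesic'' can be dropped since any cheap path already has length $\ge d(x,y)$, while in the second $\inf > 0$ forces cheap paths to be nearly geodesic for free.)
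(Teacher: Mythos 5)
Your approach diverges from the paper's and has a genuine gap. The paper proves Lemma~\ref{lem:bddawayfrominf} via a Peierls-type block renormalization: it invokes Lemma~\ref{lem:peierls} together with the Voronoi tiling of Proposition~\ref{prop:voronoiworks}, attaching to each tile of scale $R$ the local event ``every path (resp.\ edge-geodesic path when $\inf>0$) from $S(o_i,\Sigma R)$ to $B(o_i,R)$ contains an edge of weight $\ge \inf + q'$,'' observing that this event has probability $\to 1$ as $R\to\infty$ by the very definition of $\underline{p_c}$ (resp.\ $\vec{\underline{p_c}}$), and then deducing from the Peierls lemma that with probability $1-e^{-\epsilon_2 d(x,y)}$ \emph{every} $x$--$y$ path traverses $\gessim d(x,y)$ pairwise disjoint good blocks, hence carries $\gessim d(x,y)$ heavy (or excess) edges. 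You instead try a direct one-path pigeonhole argument.

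The concrete gap in the $\inf=0$ case: from ``a self-avoiding $\gamma:x\to y$ with $T(\gamma) < q\,d(x,y)$ has at most $q\,d(x,y)/\theta$ edges of weight $>\theta$'' you conclude that the light subgraph $\{e : w(e)\le \theta\}$ contains a path from $x$ to a vertex at distance $\ge (1-q/\theta)\,d(x,y)$. This does not follow. Deleting the heavy edges of $\gamma$ leaves a disjoint union of light segments; pigeonhole only guarantees that \emph{some} segment has length of order $\theta/q$ --- a constant, not linear in $d(x,y)$ --- so there is no single long light one-arm event from $x$ to feed into the definition of $\underline{p_c}$. You acknowledge a parallel unresolved ``delicate point'' in the $\inf>0$ case.

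A more careful repair --- union bounding over paths $\gamma$ and over placements of the few heavy edges, exploiting independence of the light segments --- runs into a combinatorial explosion that only closes when $\nu(\{\inf\}) < 1/D$, which is strictly weaker than exponential-subcriticality (this is precisely the content of the remark after Lemma~\ref{lem:bddawayfrominf} citing \cite{Tessera}). To reach all the way up to $\underline{p_c}$ or $\vec{\underline{p_c}}$, one must first renormalize to a scale $R$ at which the local bad event already has probability $\le e^{-c'R}$, so that the Peierls sum over block-paths converges; that is exactly the missing step, and it is the heart of the paper's proof.
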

\begin{rmk}
   The conclusion of the above lemma also holds for \emph{any} graph $G$ of degree at most $D$ if one assumes $\nu(\{\inf\}) < 1/D$;
   this is proved in the course of proving Lemma A.1 in \cite{Tessera}.
\end{rmk}
\begin{proof}


   
   First, suppose $\inf = 0$; since $\nu$ is exponential-subcritical, $\nu(\{0\}) < \underline{p_c}$, and we can pick $q' > 0$ sufficiently small that
   if $\nu([\inf,\inf+q']) < \underline{p_c}$. Then, by the definition of $\underline{p_c}$,
   there is some $c'>0$ such that for any $R$ sufficiently large, for any $v \in V$,
   \[
      \Prob(v \mbox{ is connected to } B(v,R)^c \mbox{ by a path of edges which each have weight} < \inf + q')
      \le e^{-c'R}.
   \]
   In particular, for any $\Sigma \ge 2$, we have that 
   \begin{align*}
      \Prob(\exists p \in S(v, \Sigma R), x \in B(v,R), \mbox{ path } \alpha:p \to x \mbox{ in } B(v,\Sigma R) 
      \mbox{ s.t. } w(e) < \inf + q' \mbox{ for all } e \in \alpha) \\
      \le \Prob( \exists x \in B(v,R), p' \in S(x, (\Sigma - 1)R), \mbox{ path } \alpha:x \to p' \mbox{ s.t. }
      w(e) < \inf + q' \mbox{ for all } e \in \alpha) \\
      \le |B(v,R)| e^{-c'(\Sigma - 1) R} \le C_1R^d e^{-c'(\Sigma -1)R} \tendsto{R}{\infty} 0.
   \end{align*}
   In particular,
   \begin{align*}
      \inf_{v \in V} \Prob(\mbox{all paths from } S(v,\Sigma R) \mbox{ to } B(v,R) \mbox{ contain at least one edge of weight } \ge \inf + q')
      \tendsto{R}{\infty} 1,
   \end{align*}
   and so by Lemma \ref{lem:peierls}, for all sufficiently large $R$, there exist $c_2(R) >0, \epsilon_2(R) > 0$ such that
   for all sufficiently large $d(x,y)$,
   \[
      \Prob \left( 
      \begin{array}{l} \exists \gamma:x \to y \mbox{ visiting at most } c_2 d(x,y) \mbox{ distinct } B_i \mbox{ such that } \\
                           \mbox{ all paths from } S(o_i,\Sigma R) \mbox{ to } B(o_i,R) \mbox{ contain at least one} \\
                           \mbox{ edge of weight } \ge \inf + q' \end{array} \\
                \right)
                \le e^{-\epsilon_2 d(x,y)}.
   \]
   Now, each $B(o_i, \Sigma R)$ intersects at most $D'$ other $B(o_i', \Sigma R)$ by Proposition \ref{prop:voronoiworks}, 
   and so if a path $\gamma$ visits at least $c_2 d(x,y)$ $B_i$  such that all paths from $S(o_i,\Sigma R)$ to $B(o_i,R)$ contain
   at least edge with weight at least $\inf + q'$, there is some collection of at least $\frac{1}{D'+1} c_2 d(x,y)$ \emph{disjoint}
   $B(o_i, \Sigma R)$ with this property such that $\gamma$ visits $B_i$. If $x \notin B(o_i,\Sigma R)$ then in particular $\gamma$
   starts outside of $B(o_i, \Sigma R)$ and so since $\gamma$ visits $B_i \subset B(o_i,R)$, some subpath of $\gamma$
   joins $S(o_i,\Sigma R)$ to $B(o_i,R)$ and so some edge of $\gamma \cap E(B(o_i, \Sigma R))$ has weight at least $\inf + q'$.
   So by disjointness we conclude that $\gamma$ has at least $\frac{c_2}{D'+1} d(x,y) - 1$ edges of length at least $\inf + q'$,
   and so
   \[
      T(\gamma) \ge (\inf) d(x,y) + q' \left( \frac{c_2}{D'+1} d(x,y) - 1 \right)
   \]
   in this case. So taking $q := q' c_2/(2(D'+1))$, we see that whenever $d(x,y) \ge 2(D'+1)/c_2$ we have
   \[
      \Prob(T(\gamma) < (\inf + q)d(x,y)) \le e^{-\epsilon_2 d(x,y)},
   \]
   and the lemma follows.
   
   Now, suppose that $\inf > 0$. Then choose $q' > 0$ such that $\nu([\inf, \inf + q']) < \vec{\underline{p_c}}$ to obtain $c' > 0$ such that for any $R$ sufficiently large,
   for any $v \in V$ we have 
   \[
      \Prob \left(
                \begin{array}{c}
                v \mbox{ is connected to } B(v,R)^c \mbox{ by an edge-geodesic path} \\ 
                \mbox{of edges which each have weight} < \inf + q'
                \end{array}
                \right)
      \le e^{-c'R}.
   \]
   Then arguing similarly as above, by Lemma \ref{lem:peierls}, for all sufficiently large $R$, there exist $c_2(R) >0, \epsilon_2(R) > 0$ such that
   for all sufficiently large $d(x,y)$,
   \[
      \Prob \left( 
      \begin{array}{l} \exists \gamma:x \to y \mbox{ visiting at most } c_2 d(x,y) \mbox{ distinct } B_i \mbox{ such that } \\
                           \mbox{ all edge-geodesic paths from } S(o_i,\Sigma R) \mbox{ to } B(o_i,R) \mbox{ contain} \\ 
                           \mbox{ at least one edge of weight } \ge \inf + q' \end{array} \\
                \right)
                \le e^{-\epsilon_2 d(x,y)}.
   \]
   Similar to above, we then see that (except on an exponentially small event) every path $\gamma$ from $x$ to $y$ contains at least
   $\frac{c_2}{D'+1} d(x,y) - 1$ disjoint subpaths which are either not edge-geodesic, or contain an edge of weight at least $\inf + q'$.
   Each such subpath $\gamma_i$ has passage time $T(\gamma_i) \ge (\inf)|\gamma_i| + \min(\inf, q')$.
   So taking $q := \min(q', \inf) c_2/(2(D'+1)) > 0$ and $c = \epsilon_2$ gives the lemma.
   
\end{proof}
\begin{rmk}
   This is the only part of the proof where we use the exponential subcriticality of $\nu$.   
   \end{rmk}
\begin{rmk}
   This lemma implies in particular that if $G$ is a Cayley graph of a finitely generated virtually nilpotent group and if $\nu(\{0\}) < p_c$,
   then there exists $a>0$ such that for all $x,y \in V$, $\E T(x,y) \ge a d(x,y)$.
   This means, for instance, that the results of \cite{BenjaminiTessera} giving the existence of a scaling limit apply when
   $\nu$ has an exponential moment and $\nu(\{0\}) < p_c$ (a weaker condition than the condition $\nu(\{0\}) < 1/D$ quoted in that paper).
\end{rmk}

\subsection{Proof strategy: a resampling scheme}
Note that if we have any family of events $A_i^R$ as in Lemma \ref{lem:peierls}, we have that in particular
\begin{align*}
   &\sum_i \Prob( \{\mbox{the geodesic } \pi:x \to y \mbox{ visits } B^R_i\} \cap A^R_i ) \\
   = &\E[ \# B_i \mbox{ such that } \pi \mbox { visits } B_i \mbox{ and } A^R_i \mbox {holds} ] \\
   \ge &(c_2 d(x,y)) \Prob( \pi \mbox{ visits at least } c_2d(x,y) \mbox{ } B_i \mbox{ such that } A^R_i \mbox{ holds}) \\
   \ge &(c_2 d(x,y)) (1 - e^{-\epsilon_2 d(x,y)}) \gessim d(x,y).
\end{align*}
We will say that $\pi$ \emph{crosses} $B(o_i, \Sigma R)$ if $\pi$ starts at a vertex outside $B(o_i, \Sigma R)$, ends at a vertex outside
$B(o_i, \Sigma R)$, and visits $B_i$. Since the number of $o_i$ such that $x \in B(o_i, \Sigma R)$ or $y \in B(o_i, \Sigma R)$
is bounded independent of $x$ and $y$, we see also from the above that
\[
   \sum_i \Prob( \{\mbox{the geodesic } \pi:x \to y \mbox{ crosses } B(o_i, \Sigma R) \} \cap A^R_i ) \gessim d(x,y).
\]

Thus, if we find a family of events $\{A^R_i\}$ such that for each $i$, $\Prob( B(o_i, \Sigma R) \mbox{ contains a feasible pair})$ is at least 
a positive constant (independent of $x,y,i$, but possibly depending on $R$) times 
\\ $\Prob( \{ \pi \mbox{ crosses } B(o_i, \Sigma R) \} \cap A^R_i )$, we will have
\[
   \sum_i \Prob( B(o_i, \Sigma R) \mbox{ contains a feasible pair for } \pi: x \to y ) \gessim d(x,y),
\]
and hence by Lemma \ref{lem:feasibletovdBK} we will have our theorem. (Note that here the role of the $B_i$ from 
Lemma \ref{lem:feasibletovdBK} is played by $B(o_i, \Sigma R)$, not the Voronoi tiles $B_i$ we defined in the last section).

We will obtain a bound of the form 
\[
   \Prob(B(o_i, \Sigma R) \mbox{ contains a feasible pair}) \ge c(R) \Prob( \{ \pi \mbox{ crosses } B(o_i, \Sigma R) \} \cap A^R_i )
\]
by introducing a resampling scheme, as in \cite{vdBK}. Explicitly, fix some $o_i$;
throughout the rest of the paper, we abbreviate $B(s) := B(o_i,s)$. Define new random weights 
$w^*:E \to [0,\infty)$ as follows: $w^*|_{E(B(\Sigma R)^c} = w|_{E(B(\Sigma R)^c}$, but the $w^*(e), e \in E(B(\Sigma R)$
are i.i.d. $\nu$-distributed random variables, also independent of $w$. (Recall that for $S \subset V$, we define $E(S) \subset E$ to
be the set of edges of $G$ with endpoints lying in $S$).
Note that $w$ and $w^*$ are equal in distribution.
For each $R$ we will define a $w$-measurable random set of configurations $E_w \subset [0, \infty)^{E(B(\Sigma R))}$ such that
\begin{equation} \label{eq:resampleevents}
   \{ \pi \mbox{ crosses } B(o_i, \Sigma R) \} \cap A_i^R \cap \{ w^*|_{E(B(\Sigma R))} \in E_w \} 
   \subset \{ B(\Sigma R) \mbox{ contains a feasible pair for } \pi^* \},
\end{equation}
where $\pi$ is the $T$-geodesic from $x$ to $y$ and $\pi^*$ is the $T^*$-geodesic from $x$ to $y$.
To reduce clutter, let us abbreviate the event $\{ w^*|_{E(B(\Sigma R)} \in E_w\}$ by $\{ w^* \in E_w\}$.
If in addition we ensure that the conditional probability $\Prob( w^* \in E_w | w) \ge c(R) > 0$ on the event
$\{ \pi \mbox{ crosses } B(\Sigma R) \} \cap A_i^R$ (where $c(R)$ is some non-random constant), we get
\begin{align*}
   \Prob( B(\Sigma R) \mbox{ contains a feasible pair for } \pi )
   &= \Prob( B(\Sigma R) \mbox{ contains a feasible pair for } \pi^* ) \\
   &\ge \Prob( \{ \pi \mbox{ crosses } B(o_i, \Sigma R) \} \cap A_i^R \cap \{ w^* \in E_w \} )\\
   &= \E\left[ \ind_{\{ \pi \mbox{ crosses } B(o_i, \Sigma R) \} \cap A_i^R\}} \E[ \ind_{\{ w^* \in E_w \}} | w ]\right] \\
   &\ge c(R) \Prob( \{ \pi \mbox{ crosses } B(o_i, \Sigma R) \} \cap A^R_i ),
\end{align*}
as desired. The discussion in this section is summarized in following proposition:
\begin{prop} \label{prop:conditionaltovdBK}
   Suppose there exist $w$-measurable events $A_i^R$ satisfying the conditions of Lemma \ref{lem:peierls} 
   and $w$-measurable random sets of configurations $E_w$ such that for sufficiently large $R$ 
   \eqref{eq:resampleevents} holds and
   $\Prob(w^* \in E_w | w) \ge c(R)$ on the event $\{ \pi \mbox{ crosses } B(\Sigma R) \} \cap A_i^R$,
   where $c(R) > 0$ is a constant depending only on $R, \nu, \tilde{\nu},$ and $G$. Then 
   \[
      \liminf_{d(x,y) \to \infty} \frac{\E T(x,y) - \E \tilde{T}(x,y)}{d(x,y)} > 0.
   \]
\end{prop}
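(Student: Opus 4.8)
The plan is to stitch together the pieces assembled in this section. First I would arrange that \eqref{eq:extraassumption} holds: if our coupling does not satisfy it, replace $\tilde w$ by the weight $\bar w$ produced by Lemma~\ref{lem:wlog}, whose distribution is strictly more variable than $\nu$, satisfies \eqref{eq:extraassumption}, and has $\E\tilde T(x,y)\le\E\bar T(x,y)$ for all $x,y$; proving the conclusion with $\tilde T$ replaced by $\bar T$ then suffices. Next, fix once and for all a single $R$ large enough that (i) the Voronoi family $\{B_i\}$ together with $\tilde B_i:=E(B(o_i,\Sigma R))$ and the given events $A_i^R$ satisfy the hypotheses of Lemma~\ref{lem:peierls} (this uses Proposition~\ref{prop:voronoiworks}), and (ii) the inclusion \eqref{eq:resampleevents} and the bound $\Prob(w^*\in E_w\mid w)\ge c(R)$ on $\{\pi\mbox{ crosses }B(o_i,\Sigma R)\}\cap A_i^R$ are in force.

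With $R$ fixed, I would run the first-moment argument. By Lemma~\ref{lem:peierls}, outside an event of probability at most $e^{-\epsilon_2 d(x,y)}$ the $T$-geodesic $\pi:x\to y$ visits at least $c_2 d(x,y)$ distinct tiles $B_i$ on which $A_i^R$ holds; taking the expectation of this count gives
\[
   \sum_i \Prob\big(\{\pi\mbox{ visits }B_i\}\cap A_i^R\big)\;\ge\; c_2 d(x,y)\big(1-e^{-\epsilon_2 d(x,y)}\big)\;\gessim\; d(x,y).
\]
I would then pass from ``visits'' to ``crosses $B(o_i,\Sigma R)$'': if $\pi$ visits $B_i$ and $x,y\notin B(o_i,\Sigma R)$, then, since $\pi$ starts at $x$ and ends at $y$, it crosses $B(o_i,\Sigma R)$; and the $R$-separation of $\{o_i\}$ together with the strict polynomial volume bounds forces $\#\{i:x\in B(o_i,\Sigma R)\}$ and $\#\{i:y\in B(o_i,\Sigma R)\}$ to be bounded by a constant depending only on $R$ (the argument is that of Proposition~\ref{prop:voronoiworks}). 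Discarding those finitely many indices still leaves $\sum_i \Prob\big(\{\pi\mbox{ crosses }B(o_i,\Sigma R)\}\cap A_i^R\big)\gessim d(x,y)$ once $d(x,y)$ is large.

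Now I would invoke the resampling identity. Since $w$ and $w^*$ have the same law, so do the $T$-geodesic $\pi$ and the $T^*$-geodesic $\pi^*$; in particular $\Prob(B(o_i,\Sigma R)\mbox{ contains a feasible pair for }\pi)=\Prob(B(o_i,\Sigma R)\mbox{ contains a feasible pair for }\pi^*)$ for every $i$. Applying the inclusion \eqref{eq:resampleevents}, conditioning on $w$, and using $\Prob(w^*\in E_w\mid w)\ge c(R)$ on $\{\pi\mbox{ crosses }B(o_i,\Sigma R)\}\cap A_i^R$ yields, exactly as in the displayed computation preceding the statement,
\[
   \Prob\big(B(o_i,\Sigma R)\mbox{ contains a feasible pair for }\pi\big)\;\ge\; c(R)\,\Prob\big(\{\pi\mbox{ crosses }B(o_i,\Sigma R)\}\cap A_i^R\big).
\]
Summing over $i$ and combining with the previous paragraph gives $\sum_i \Prob\big(B(o_i,\Sigma R)\mbox{ contains a feasible pair for }\pi:x\to y\big)\gessim d(x,y)$. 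Finally, Proposition~\ref{prop:voronoiworks} guarantees $\sup_i\#\{j:B(o_j,\Sigma R)\cap B(o_i,\Sigma R)\ne\emptyset\}<\infty$, so Lemma~\ref{lem:feasibletovdBK}, applied with the role of its $B_i$ played by $B(o_i,\Sigma R)$, produces $\liminf_{d(x,y)\to\infty}(\E T(x,y)-\E\tilde T(x,y))/d(x,y)>0$.

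No step presents an essential obstacle: this proposition merely packages the discussion of the section, and its proof is bookkeeping on top of Lemma~\ref{lem:peierls}, Proposition~\ref{prop:voronoiworks}, and Lemma~\ref{lem:feasibletovdBK}. The one point deserving attention is that the constant $c(R)$ (and likewise $c_2,\epsilon_2$, and the implied constant $\eta$ of Lemma~\ref{lem:peierls}) may degenerate as $R\to\infty$; this is harmless precisely because we commit to a single admissible $R$ at the very start and never vary it, so all of these become genuine positive constants for the remainder of the argument. The only other care needed is the elementary volume estimate bounding the number of Voronoi centers whose $\Sigma R$-ball contains $x$ or $y$, which is handled as indicated above.
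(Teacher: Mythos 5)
Your proposal is correct and follows essentially the same route as the paper: the paper's own justification of this proposition is exactly the discussion preceding it (Peierls first-moment count, passing from ``visits'' to ``crosses'' by discarding the boundedly many indices with $x$ or $y$ in $B(o_i,\Sigma R)$, the resampling identity using $w\overset{d}{=}w^*$ and \eqref{eq:resampleevents}, and then Lemma \ref{lem:feasibletovdBK} via the bounded-overlap bound of Proposition \ref{prop:voronoiworks}). The only cosmetic difference is that you fold the Lemma \ref{lem:wlog} reduction to \eqref{eq:extraassumption} into this proposition, whereas the paper performs it later in the proof of Theorem \ref{thm:polygrowthvdBK}; this changes nothing.
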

Thus the meat of the proof of Theorem \ref{thm:polygrowthvdBK} 
consists of performing a ``geometric'' construction to obtain suitable $A_i^R$ and $E_w$.

\subsection{Geometric construction: bounded case} \label{bddsupp}
First, suppose that $\nu$ has bounded support. We want to construct $A_i^R$ and $E_w$ satisfying the hypotheses
of Proposition \ref{prop:conditionaltovdBK}.
Denote by $\inf$ the infimum of the support of $\nu$
and denote by $\sup$ the supremum of the support of $\nu$.
Assume that \eqref{eq:extraassumption} holds, and then choose $\epsilon>0, y_0, I_0$
as in Lemma \ref{lem:technicallemma}. 
Assuming that $G$ admits detours, let $C$ be such that every self-avoiding path of length $C$
admits a self-avoiding $\epsilon$-detour. Set $C' := C(3 + 2\epsilon)$.
Assume that $\nu$ is exponential-subcritical, and
then let $q>0$ be the parameter given by Lemma \ref{lem:bddawayfrominf}.
Denote by $D$ the maximum degree of $G$.

First, let us consider the case that $y_0 = \sup$; in fact this allows us to do a much simpler construction. 
In this case, choose $\Sigma > 2$ large enough that $(\inf + q)\left(1 - \frac{1}{\Sigma} \right) > \inf$ and
choose $\delta > 0$ such that $\inf + \delta < (\inf + q)\left(1 - \frac{1}{\Sigma}\right)$.
Choose a sequence $\delta_{\sup}(R) \tendsto{R}{\infty} 0$ such that for each $R$ 
$\nu([\sup - \delta_{\sup}(R), \sup]) > 0$ but $\lim_{R \to \infty} \nu([\inf,\sup - \delta_{\sup}(R)])^{D C_1 (\Sigma R)^d} = 1$.
Then let $A_i^R := A_1 \cap A_2$ where $A_1$ and $A_2$ are as follows:
\[
   A_1 := \left\{ \begin{array}{c} 
                           \mbox{For all vertices } v,w \in B(\Sigma R) \mbox{ with } d(v,w) \ge R, \\ 
                           \mbox{ all paths } \gamma \mbox{ from } v \mbox{ to } w 
                           \mbox{ in } B(\Sigma R) \mbox{ have } T(\gamma) \ge (\inf + q)d(v,w)
                        \end{array} \right\}.
\]
\[
   A_2 := \left\{ w(e) \le \sup - \delta_{\sup} \mbox{ for all } e \in E(B(\Sigma R)) \right\}.
\]
We see that both events only depend on the weights of edges in $B(\Sigma R)$, by choice of $\delta_{\sup}(R)$ we
have $\Prob(A_2) \tendsto{R}{\infty} 1$, and by Lemma \ref{lem:bddawayfrominf} we have that for sufficiently large $R$
\[
   \Prob(A_1^c) \le \sum_{\substack{v,w \in B(\Sigma R),\\ d(v,w) \ge R}} \Prob( T(v,w) < (\inf + q) d(v,w) )
   \le (C_1 R^d)^2 e^{-\epsilon_2 R} \tendsto{R}{\infty} 0
\]
uniformly in $i$, so the hypotheses of Lemma \ref{lem:peierls} are satisfied.

Now in this case set of configurations $E_w$ does not actually depend on $w$; we simply set
\[
   E_w := \left\{ \omega \in [0,\infty)^{E(B(\Sigma R))} : \omega(e) \in 
                                                                           \begin{array}{lc}
                                                                              [\inf, \inf + \delta) &\mbox{ if } e \in E(B(\Sigma R - C')), \\ \relax
                                                                              [\sup, \sup - \delta_{\sup}] \cap I_0 &\mbox{ otherwise} 
                                                                           \end{array} \right\}.
\]

\begin{figure}[t]
   \centering
   \includegraphics[scale=.5]{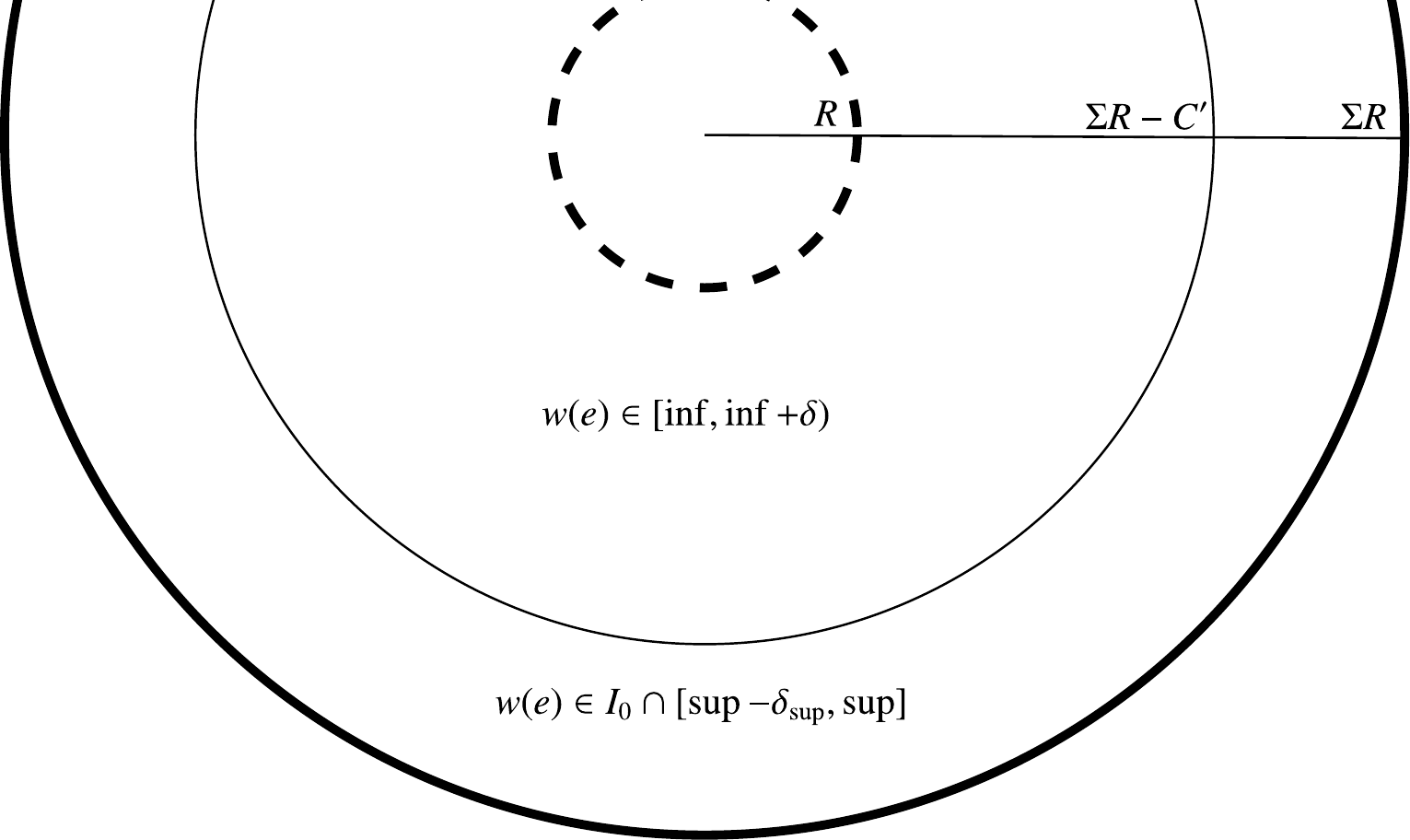}
   \caption{A schematic diagram of the prescribed set of configurations $E_w$ in the case
   that $\nu$ has bounded support and $y_0 = \sup$.}
   \label{fig:boundedconstruction1}
\end{figure}

Let us show that, for sufficiently large $R$, 
on the event $\{ \pi \mbox{ crosses } B(\Sigma R) \} \cap A_1 \cap A_2 \cap \{w^* \in E_w\}$, $B(\Sigma R)$ contains
a feasible pair for any $T^*$-geodesic.

For a subset $S \subset E$, denote by $T_S(p,q)$ the infimal weight of a path from $p$ to $q$ which only uses edges lying in $S$.
First, let $a$ and $b$ be points of $S(\Sigma R)$ such that $T_{E(B(\Sigma R))^c}(x,a)$ and $T_{E(B(\Sigma R))^c}(b,y)$
are infimal. Fix a $T$-geodesic $\alpha \subset E(B(\Sigma R))^c$ from $x$ to $a$, an edge geodesic $[a,o_i]$ from $a$ to $o_i$,
an edge-geodesic $[o_i,b]$ from $o_i$ to $b$, and a $T$-geodesic $\beta \subset E(B(\Sigma R))^c$ from $b$ to $y$,
and define $\pi' := \alpha * [a,o_i] * [o_i,b] * \beta$.
We claim that $T^*(\pi') < T(\pi)$ when $R$ is sufficiently large. To see this, first note that, if $v$ and $w$ are the first
and last vertices of $\pi$ lying on $S(\Sigma R)$, we have
\[
   T^*(\pi'_{x,a}) + T^*(\pi'_{b,y}) = T(\pi'_{x,a}) + T(\pi'_{b,y}) \le T(\pi_{x,v}) + T(\pi_{w,y}),
\]
where here and elsewhere, for a path $\gamma$ and vertices $p,q \in V(\gamma)$, $\gamma_{p,q}$ denotes the
subpath of $\gamma$ starting at $p$ and ending at $q$.

Next, since $\pi$ crosses $B_i$, $\pi_{v,w}$ contains at least two subsegments connecting $S(\Sigma R)$ and $S(R)$,
and so since $A_1$ holds we have
\[
   T(\pi_{v,w}) \ge 2(\inf + q)(\Sigma - 1)R = (\inf + q)\left(1 - \frac{1}{\Sigma}\right) 2 \Sigma R,
\]
while if $w^* \in E_w$, we have
\[
   T^*(\pi'_{a,b}) \le 2 \Sigma R(\inf + \delta) + (\sup)C'.
\]
Since by construction $\inf + \delta < (\inf + q)\left(1 - \frac{1}{\Sigma}\right)$ and $(\sup)C' = o(R)$,
for sufficiently large $R$ we have $T^*(\pi') < T(\pi)$.

Now, consider a $T^*$-geodesic $\pi^*$ from $x$ to $y$. On our event, we have $w^* \ge w$ on $E(B(\Sigma R - C'))^c$,
so if $\pi^*$ did not intersect $E(B(\Sigma R - C'))$, we would have $T^*(\pi^*) \ge T(\pi) > T^*(\pi')$, a contradiction.
Thus, $\pi^*$ must visit $B(\Sigma R - C')$. In particular, it contains a subpath connecting $S(\Sigma R)$ and $S(\Sigma R - C')$,
and so a subpath connecting $S(\Sigma R - C(1+\epsilon))$ and $S(\Sigma R - C' + C(1 + \epsilon))$, which must
have length at least $C' - 2C(1+\epsilon) = C$. Choose a self-avoiding $\epsilon$-detour $\gamma$ for such a segment.
Since $\gamma$ has length at most $C(1+\epsilon)$, it is contained in $E(B(\Sigma R)) \setminus E(B(\Sigma R - C'))$.
But since $w^* \in E_w$, this means that all the edges of both $\gamma$ and the subsegment of $\pi^*$ 
have weights in $I_0$. Hence $B(\Sigma R)$ contains a feasible pair for $\pi^*$.

Furthermore, since $y_0 = \sup$, by the construction of $I_0$ we have
\[
   \Prob( w^* \in E_w ) \ge \min \left( \nu([\inf, \inf + \delta)), \nu([\sup - \delta_{\sup}, \sup] \cap I_0) \right)^{D C_1 (\Sigma R)^d } > 0
\]
independent of $o_i$, so both hypotheses of Proposition \ref{prop:conditionaltovdBK} hold.

Now we suppose that $y_0 < \sup$ and do a different construction of the $A_i^R$ and $E_w$. 
Again take $\epsilon, y_0, I_0, C, C', q$ as above.
Then take some large $\Sigma_0 > 2$ such that
\[
   \inf < \left(1 - \frac{1}{\Sigma_0} \right) (\inf + q) < \sup;
\]
then take some $\delta_0 > 0$ sufficiently small that
\[
   \inf + \delta_0 < \left(1 - \frac{1}{\Sigma_0} \right) (\inf + q) < \sup,
\]
\[
   \sup - \E w - 2\delta_0 > 0,
\]
and
\[
   \sup - y_0 - 2\delta_0 > 0.
\]
(Note that $\E w < \sup$ since in the case that $\nu$ is Dirac, $y_0 = \sup$).
Next, fix some $0 < s < \left( 1 - \frac{1}{\Sigma_0} \right) \frac{(\inf+q)}{\sup}$ such that
\[
   (\inf + \delta_0) + s \sup < \left(1 - \frac{1}{\Sigma_0}\right) (\inf + q).
\]
Then fix some $\Sigma \ge \Sigma_0$ such that $s\Sigma > 1$.
Also fix some 
some $0 < \kappa < \frac{\sup - \delta_0 - \E w }{ \sup - \inf } s$.

The event $A_i^R$ will be defined as the intersection of three events $A_1 \cap A_2 \cap A_3$.
We set
\[
   A_1 := \left\{ \begin{array}{c} 
                           \mbox{For all vertices } v,w \in B(\Sigma R) \mbox{ with } d(v,w) \ge R, \\ 
                           \mbox{ all paths } \gamma \mbox{ from } v \mbox{ to } w 
                           \mbox{ in } B(\Sigma R) \mbox{ have } T(\gamma) \ge (\inf + q)d(v,w)
                        \end{array} \right\},
\]
just as in the first case.
We set
\[
   A_2 := \left\{ \begin{array}{c} 
                           \mbox{For all vertices } v,w \in B(\Sigma R) \mbox{ with } d_{E(B(\Sigma R))}(v,w) \ge R, \\ 
                           T_{E(B(\Sigma R))}(v,w) \le (\E w + \delta_0) d_{E(B(\Sigma R))}(v,w)
                           \end{array} \right\}.
\]
For this, note that for each fixed pair of points $v,w$ with $d_{E(B(\Sigma R))}(v,w) \ge R$, fixing an edge-minimal path $\gamma: v \to w$ in $B(\Sigma R)$,
we have 
\begin{align*}
   \Prob(T_{E(B(\Sigma R))}(v,w) > (\E w + \delta_0) d_{E(B(\Sigma R))}(v,w)) 
   &\le \Prob( T(\gamma) > (\E w + \delta_0) |\gamma| ), \\
\end{align*}
which, since $T(\gamma)$ is just a sum of $|\gamma|$ i.i.d. $\nu$-distributed random variables, decays exponentially in $|\gamma|$, (hence $R$),
by a standard Chernoff bound ($\nu$ has bounded support and hence exponential moments).
Since the number of pairs of such $(v,w)$ is strictly subexponential in $R$, we have $\Prob(A_2^c) \tendsto{R}{\infty} 0$, as desired.
Clearly also $A_2$ only depends on the weights of edges in $B(\Sigma R)$.

Lastly we choose for each $R$ some $0 \le \delta_{\sup}(R) < \delta_0$ such that $\nu([\sup - \delta_{\sup}, \sup]) > 0$
and $\nu([\inf,\sup - \delta_{\sup}(R)])^{D C_1 (\Sigma R)^d} \tendsto{R}{\infty} 1$, and then set
\[
   A_3 := \left\{ w(e) \le \sup - \delta_{\sup} \mbox{ for all } e \in E(B(\Sigma R)) \right\}.
\]
Clearly $A_3$ only depends on edges in $B(\Sigma R)$ and by our construction of $\delta_{\sup}(R)$, 
we have $\Prob(A_3) \tendsto{R}{\infty} 1$ uniformly in $i$, as desired.

Now, let $a',b' \in S(\Sigma R)$ be such that $T_{E(B(\Sigma R))^c}(x,a')$ and $T_{E(B(\Sigma R))^c}(b',y)$ are minimal.
Choose edge geodesics $[a',o_i]$ and $[b',o_i]$. Let $a \in V([a',o_i]), b \in V([b',o_i])$ be the unique vertices such that
$d(a,a'), d(b,b') = \lceil s \Sigma R \rceil$.
Moreover, for each $t \in [0,\Sigma R - \lceil s \Sigma R \rceil] \cap \Z$, let $a_t \in V([a,o_i]), b_t \in V([b,o_i])$ be the unique vertices such that
$d(a,a_t), d(b,b_t) = t$.
Now, let $t_a \ge 0$ be minimal such that
\[
   d(a_{t_a+1}, [b,o_i]) \le 2C',
\]
and let $t_b \ge 0$ be minimal such that
\[
   d(b_{t_b+1}, [a,o_i]) \le 2C',
\]
and set $ c:= a_{t_a}$, $d:=b_{t_b}$.
Note that minimality implies that for all $0 \le t \le t_a$ we have $d(a_t,[b,o_i]) \ge 2C' + 1$ and for all $0 \le t \le t_b$
we have $d(b_t,[a,o_i]) \ge 2C' + 1$.
Here we have tacitly used the fact that $d(a,b) \ge d(a',b') - 2\lceil s \Sigma R \rceil \gessim R$
is strictly larger than $2C'$ for sufficiently large $R$. 
To see the bound $d(a',b') - 2\lceil s \Sigma R \rceil \gessim R$, 
let $v$ and $w$ be the entry and exit points from $B(\Sigma R)$ of the $T$-geodesic $\gamma: x \to y$, and
note that
\[
   d(a',b') \ge \frac{1}{\sup} T(a',b') \ge \frac{1}{\sup} T(v,w) \ge \frac{\inf + q}{\sup} \left(1 - \frac{1}{\Sigma}\right) 2 \Sigma R,
\]
so 
\[
   d(a,b) \ge d(a',b') - 2\lceil s\Sigma R \rceil \ge \left[ \frac{\inf + q}{\sup} \left(1 - \frac{1}{\Sigma}\right) - s \right](2 \Sigma R) - 1,
\]
which is $\gessim R$ by choice of $s$.
The bound $T(a',b') \ge T(v,w)$ comes from the fact that, since $v,w$ lie on the $T$-geodesic from $x$ to $y$,
$T(x,y) = T(x,v) + T(v,w) + T(w,y) \le T(x,a') + T(a',b') + T(b',y)$, and by definition of $a',b'$ we have
$T(x,v) + T(w,y) \ge T(x,a') + T(b',y)$.
The bound $T(v,w) \ge (\inf + q) \left(1 - \frac{1}{\Sigma}\right) 2 \Sigma R$ comes from the fact that
$\pi$ crosses $B_i$ and hence contains at least two paths connecting $S(\Sigma R)$ and $S(R)$,
which, since $A_1$ holds, have total passage time at least $2(\inf + q)(\Sigma - 1)R$.


Now consider the sets of integers
\[
   S_n(C',\kappa) := \{ n \lfloor \kappa \Sigma R \rfloor + j : j \in [0,C'] \cap \Z \} \subset \Z
\]
and
\[
   S'_n(C',\kappa) := \{ n \lfloor \kappa \Sigma R \rfloor + j : j \in [C(1+\epsilon),C(2+\epsilon)] \cap \Z \} \subset \Z,
\]
where $n \ge 0, n \in \Z$. Then let $\alpha_n$ and $\beta_n$ respectively be the subpaths of $[a,c]$ and $[b,d]$ respectively induced by the 
vertex sets $\{ a_t : t \in S_n \}$ and $\{ b_t : t \in S_n \}$. 
Similarly let $\alpha'_n$ and $\beta'_n$ be induced by $\{ a_t : t \in S'_n \}$ and $\{ b_t : t \in S'_n \}$.
For each $n \ge 0$ with $(n+1)\lfloor \kappa \Sigma R \rfloor \le t_a, t_b$,
fix a self-avoiding $\epsilon$-detour $\gamma_n$ for $\alpha'_n$ and a self-avoiding $\epsilon$-detour $\delta_n$ for $\beta'_n$.
Note that by construction each $\alpha_n \cup \gamma_n$ is disjoint from $[b,d]$ and all $\beta_m \cup \delta_m$, and vice versa.
Moreover, $\alpha_n \cup \gamma_n$ is disjoint from $\alpha_m \cup \gamma_m$ for $n \ne m$, and the same is true for the
$\beta_n \cup \delta_n$.

Finally, define 
\[
   S_I := \bigcup_{\substack{n \ge 0, \\ (n+1)\lfloor \kappa \Sigma R \rfloor \le t_a, t_b}}  (\alpha_n \cup \gamma_n) \cup (\beta_n \cup \delta_n),
\]
define
\[
   S_{\inf} := ([a,c] \cup [b,d]) \setminus S_I,
\]
and set $S_{\sup} := E(B(\Sigma R)) \setminus (S_{\inf} \cup S_I)$.
For each $R$ we choose $0<\delta_{\inf}(R) < \delta_0$ sufficiently small that 
$(D C_1 R^d +2)\delta_{\inf} < \sup - \delta_0 - y_0$.
We finally define our random set of configurations by
\begin{align*}
   E_w :=
   \left \{ \omega \in [0, \infty)^{E(B(\Sigma R))} : 
                 \omega(e) \in 
                      \begin{array}{lc}
                         I_0 \cap (y_0 - \frac{\delta_{\inf}}{2}, y_0 + \frac{\delta_{\inf}}{2}) & e \in S_I \\ \relax
                         [\inf, \inf + \delta_{\inf}] & e \in S_{\inf} \\ \relax
                         [\sup - \delta_{\sup}, \sup] & e \in S_{\sup}
                      \end{array} 
   \right \}.
\end{align*}

\begin{figure}[t]
   \centering
   \includegraphics[scale=.65]{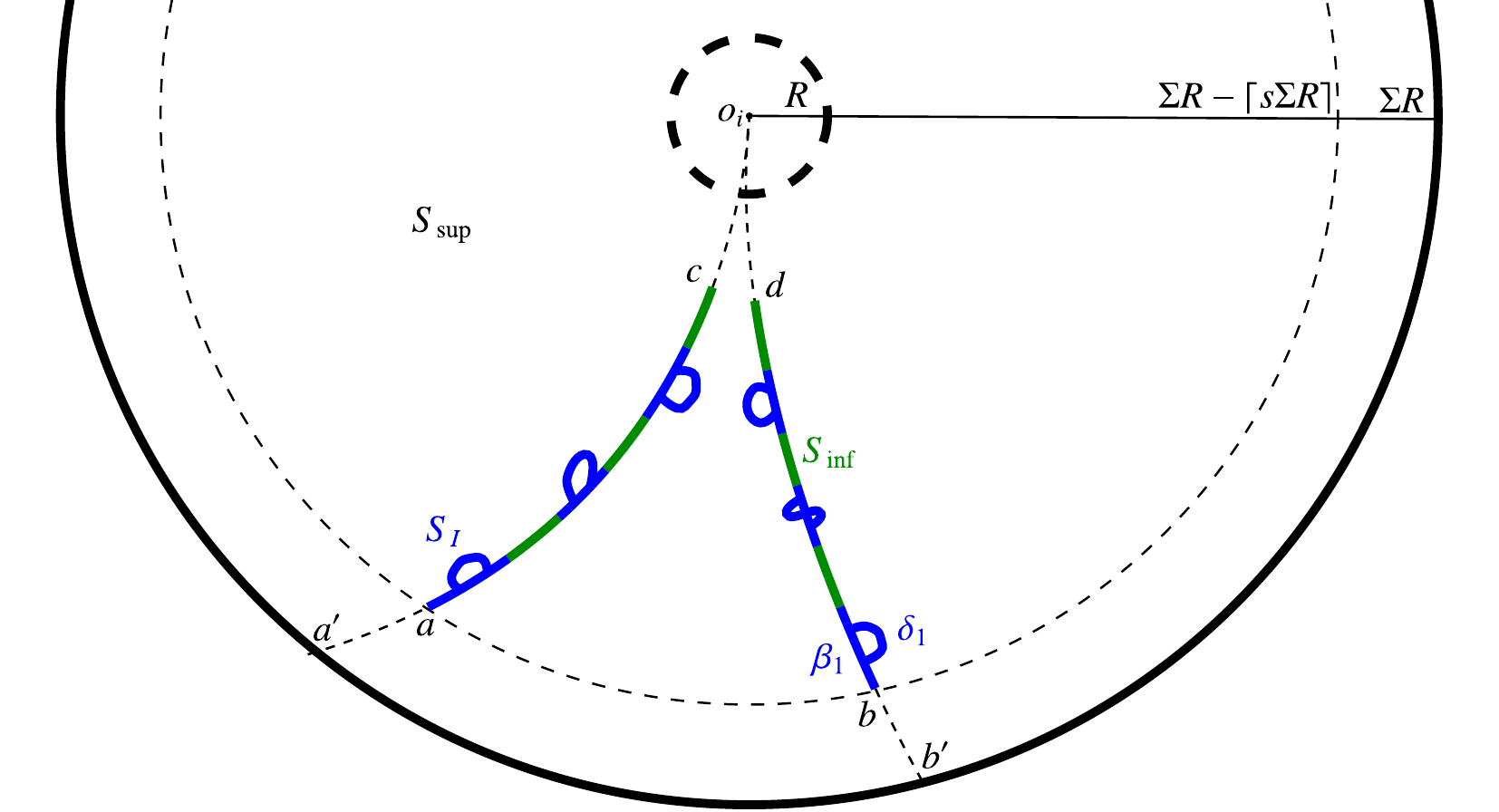}
   \caption{A schematic diagram of the prescribed set of configurations $E_w$ in the case
   that $\nu$ has bounded support and $y_0 < \sup$.}
   \label{fig:boundedconstruction2}
\end{figure}

Now let us prove that $A_1 \cap A_2 \cap A_3 \cap \{ \pi \mbox{ crosses } B_i \} \cap \{ w^* \in E_w \}$
is contained in the event that $B(\Sigma R)$ contains a feasible pair with respect to $T^*$.

First, define a path $\pi'$ by taking a $T$-geodesic from $x$ to $a'$ in $B(\Sigma R)^c$, then taking the path $[a',c]$,
taking an edge-geodesic from $c$ to $d$, taking $[d,b']$ and then taking a $T$-geodesic from $b'$ to $y$ in $B(\Sigma R)^c$.
For all sufficiently large $R$, on the event $\{\pi \mbox{ crosses } B_i\} \cap A_i^R \cap \{ w^* \in E_w \}$, we 
have that $T^*(\pi') < T(\pi)$. To see this, first note that by definition of $a',b'$, if $v,w$ are the first entrance and last exit of $\pi$ from $B(\Sigma R)$
then we have $T(\pi_{x,v}) + T(\pi_{w,y}) = T(x,v) + T(w,y) \ge T(x,a') + T(b',y) = T^*(\pi'_{x,a'}) + T^*(\pi'_{b',y})$.
Thus it suffices to show that $T(\pi_{v,w}) > T^*(\pi'_{a',b'})$ for sufficiently large $R$. Since $\pi$ visits $B_i \subset B(R)$
and since $A_1$ holds we have
\[
   T(\pi_{v,w}) \ge (\inf + q)2(\Sigma - 1)R = (\inf + q)\left( 1 - \frac{1}{\Sigma} \right) 2 \Sigma R,
\]
whereas 
\begin{align*}
   T(\pi'_{a',b'}) &\le \left[ (\inf + \delta_{\inf}) + \frac{C'}{\lfloor \kappa \Sigma R \rfloor} (y_0 + \delta_0) \right](d(a,c) + d(b,d))  
   + (\sup) (2s \Sigma R + d(c,d)) \\
   &\le (\inf + \delta_{\inf} + (\sup)s ) 2 \Sigma R + o(R),
\end{align*}
so this follows from our choice to ensure $\inf + \delta_{\inf} + (\sup) s < (\inf + q)\left( 1 - \frac{1}{\Sigma} \right)$.
(We get the bound $d(c,d) = o(R)$ as follows: assume that $t_a \le t_b$; in the opposite case the argument is analogous.
By definition there exists some $t' \ge t_b + 1$ such that $d(a_{t_a + 1}, b_{t'}) \le 2C'$. But then 
\[ |t' - (t_a + 1)| = |d(o_i, b_{t'}) - d(o_i, a_{t_a + 1})| \le d(b_{t'}, a_{t_a + 1}) \le 2C', \] 
that is, $t' \le t_a + 1 + 2C' \le t_b + 1 + 2C'$,
and so 
\[
   d(c,d) \le d(c,a_{t_a +1}) + d(a_{t_a + 1}, b_{t'}) + d(b_{t'}, b_{t_b}) \le 4C' + 2 = O(C') = o(R).)
\]

Now, let $\pi^*$ be a $T^*$-geodesic from $x$ to $y$. We show that $\pi^*$ traverses a feasible pair.

We first show that if $p,q \in V(\pi^*) \cap V(S_{\inf})$ with $p$ and $q$ lying in the same connected component of $S_{\inf} \cup S_I$,
then $\pi^*_{p,q} \subset S_{\inf} \cup S_I $.
To see this, note that, when $w^* \in E_w$, if $e$ is an edge in $[a,c]$ or $[b,d]$ with one endpoint in $S(t)$ and one in $S(t+1)$, then
\[
   w^*(e) \le \inf \{ w^*(e') : e' \mbox{ has one endpoint in } S(t) \mbox{ and the other in } S(t+1) \} + \delta_{\inf}.
\]
This is because, if $e \in S_{\inf}$, then $e' \in S_{\inf}$ or $e' \in S_{\sup}$ and if $e \in S_I$ then $e' \in S_I$ or $e' \in S_{\sup}$.

Since every path from $p$ to $q$ must have at least one edge connecting $S(t)$ to $S(t+1)$ for all $t,t+1$
between $d(a,p)$ and $d(a,q)$, we see that
\[
   T^*([p,q]) \le T^*(\alpha) + \delta_{\inf} |\alpha|
\]
for any path $\alpha$ from $p$ to $q$. If furthermore $\alpha$ leaves $S_{\inf} \cup S_I$, then it contains at least one edge
of weight at least $\sup - \delta_{\sup}$; such an edge has weight at least $\sup - \delta_{\sup} - y_0 - \delta_{\inf}$
greater than any edge in $[p,q]$. Hence in this case we get the bound
\[
   T^*([p,q]) + \sup - \delta_{\sup} - y_0 - \delta_{\inf} \le T^*(\alpha) + \delta_{\inf} (|\alpha| - 1).
\]
But applying our assumption on $\delta_{\inf}(R)$ we get
\[
   T^*(\alpha) - T^*([p,q]) \ge \sup - \delta_{\sup} - y_0 - (|\alpha| + 2)\delta_{\inf} \ge \sup - \delta_0 - y_0 - (|B(\Sigma R)|+2)\delta_{\inf}
   > 0.
\]
That is, such an $\alpha$ is not optimal, and hence an optimal $T^*$-path $\pi^*_{p,q}$ must lie in $S_{\inf} \cup S_I$.

Hence, if we can show that $V(\pi^*)$ contains some $p$ and $q$ which lie in the same connected component of $S_{\inf} \cup S_I$
but lie in different components of $S_{\inf}$, then we can apply the previous argument to deduce that
$\pi^*$ passes through some $\alpha_n \cup \gamma_n$ or $\beta_n \cup \delta_n$, and then use
the following proposition to conclude that $\pi^*_{p,q}$ contains a feasible pair:
\begin{prop} \label{prop:detourtodetour}
   Let $\xi$ be an edge geodesic in $G$, and let $\gamma$ be a self-avoiding $\epsilon$-detour for a subpath of $\xi$.
   Suppose that $w^*(e) \in I_0$ for all $e \in \xi \cup \gamma$.
   Let $\pi^*$ be a $T^*$-geodesic, and suppose that some subpath of $\pi^*$ has the same endpoints as $\xi$
   and that this subpath is contained in $\xi \cup \gamma$. Then $\xi \cup \gamma$ contains a feasible pair for $\pi^*$.\footnote{
   Technically we should include assumptions controlling the lengths of these paths to satisfy our definition of a feasible pair;
   in our applications of this proposition it is easy to see that the length of the detour is at most $C'(1+\epsilon)$.}
\end{prop}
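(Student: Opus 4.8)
The plan is as follows. Fix a subpath $\eta$ of $\xi$ for which $\gamma$ is a self-avoiding $\epsilon$-detour, let $p,q$ be the common endpoints of $\xi$ and the given subpath $\sigma$ of $\pi^*$, and recall that $\sigma \subseteq \xi \cup \gamma$. Since an edge geodesic is self-avoiding and $\pi^*$ (being a $T^*$-geodesic, under the conventions of the footnote defining it) is self-avoiding, both $\xi$ and $\sigma$ are self-avoiding $p$--$q$ paths. I would argue by cases according to whether $\sigma$ equals $\xi$.

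Suppose first that $\sigma = \xi$. Then $\eta$ is a subpath of $\xi = \sigma$, hence of $\pi^*$, and I claim $(\eta, \gamma)$ is a feasible pair for $\pi^*$: the paths $\eta$ and $\gamma$ are self-avoiding, $\gamma$ is an $\epsilon$-detour for $\eta$ whose length is controlled as in the accompanying footnote, $\eta$ is a subpath of $\pi^*$, and every edge of $(\eta \cup \gamma) \setminus (\eta \cap \gamma)$ lies in $\xi \cup \gamma$, hence has $w^*$-weight in $I_0$. Suppose instead that $\sigma \ne \xi$. Then $\sigma$ uses some edge $e \notin \xi$, necessarily $e \in \gamma$. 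Let $\alpha$ be the maximal sub-arc of $\sigma$ that contains $e$ and whose interior vertices all avoid $\xi$; since $\sigma$ starts and ends on $\xi$, this $\alpha$ is well defined, and its endpoints $a \ne b$ lie on $\xi$. Let $\gamma' := \xi_{a,b}$ be the sub-arc of $\xi$ between them. By maximality $\alpha$ is internally disjoint from $\xi$, from which one checks that $\alpha$ and $\gamma'$ share no edge (a common edge would force $\alpha$ to be the single edge $\{a,b\} \in \xi$, contradicting $e \notin \xi$), so in particular $\alpha \ne \gamma'$ and $|\alpha \setminus \gamma'| \ge 1$. Because $\xi$ is a geodesic, $\gamma' = \xi_{a,b}$ is a shortest $a$--$b$ path, so
\[
   |\gamma' \setminus \alpha| = |\gamma'| \le |\alpha| = |\alpha \setminus \gamma'| \le (1+\epsilon)\,|\alpha \setminus \gamma'|,
\]
i.e. $\gamma'$ is an $\epsilon$-detour for $\alpha$. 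Moreover $\alpha$ is a self-avoiding subpath of $\pi^*$, $\gamma'$ is self-avoiding, $\gamma'$ is a sub-arc of $\xi$ so its length is controlled as in the accompanying footnote, and all edges of $\alpha \cup \gamma'$ lie in $\sigma \cup \xi \subseteq \xi \cup \gamma$ and hence have $w^*$-weight in $I_0$; thus $(\alpha, \gamma')$ is a feasible pair for $\pi^*$.

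The only points needing a little care are the routine verification that the maximal interior-disjoint sub-arc $\alpha$ is well defined and edge-disjoint from $\xi_{a,b}$ (so that the $\epsilon$-detour inequality is not vacuous), and the length bound required by the definition of a feasible pair, which is exactly the content of the accompanying footnote and follows because both $\gamma$ and $\xi$ have length $\lessim C'$ in the situations where the proposition is invoked. The conceptual point that keeps the argument short is that when the $T^*$-geodesic strays from $\xi$ we may take the detour of the feasible pair to be the geodesic arc of $\xi$ that it skips---automatically not too long, since geodesics are shortest---while when it does not stray we simply reuse the detour $\gamma$ we were given.
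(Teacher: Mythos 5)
Your proof is correct, and it takes a genuinely different route from the paper's. The paper argues at the ``global'' scale: if $\sigma = \xi$ it uses the pair $(\xi, \gamma')$, where $\gamma'$ is the loop-erasure of $\xi_1 * \gamma * \xi_2$ (with $\xi = \xi_1 * \xi' * \xi_2$ and $\xi'$ the detoured subpath), verifying that $\xi_1 * \gamma * \xi_2$ is an $\epsilon$-detour for $\xi$ by the computation $|\xi_1*\gamma*\xi_2| - |\xi| = |\gamma| - |\xi'| \le \epsilon|\xi' \setminus \gamma| = \epsilon|\xi \setminus (\xi_1*\gamma*\xi_2)|$; and if $\sigma \ne \xi$ it uses the pair $(\sigma, \xi)$, noting that since $\xi$ is an edge-geodesic it is automatically an $\epsilon$-detour for any distinct self-avoiding path with the same endpoints. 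Your argument instead localizes in both cases: when $\sigma = \xi$ you use the already-given pair $(\eta, \gamma)$ (which is cleaner and avoids the loop-erasing step), and when $\sigma \ne \xi$ you extract a minimal excursion $\alpha$ of $\sigma$ off of $\xi$ and pair it with the geodesic sub-arc $\xi_{a,b}$ it bridges, which is an $\epsilon$-detour for $\alpha$ since the two are edge-disjoint and $|\xi_{a,b}| \le |\alpha|$. Both approaches are sound; yours produces somewhat shorter feasible pairs (the detour is a sub-arc of $\xi$ or the original $\gamma$ rather than all of $\xi$ or a loop-erasure of $\xi_1*\gamma*\xi_2$), which if anything makes the length bookkeeping in the footnote slightly easier, while the paper's version is terser because it dispenses with the maximality/internal-disjointness verification for $\alpha$. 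One small remark worth making explicit in your Case~2: since $\sigma \ne \xi$ are two distinct self-avoiding $p$--$q$ paths and $\xi$ is a geodesic, $\sigma$ cannot be a proper edge-subset of $\xi$ (it would then be a strictly shorter $p$--$q$ path inside the simple path $\xi$, which is impossible), so an edge $e \in \sigma \setminus \xi$ does exist, and since $\sigma \subset \xi \cup \gamma$ it lies in $\gamma$; you implicitly use this but it is worth stating.
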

\begin{proof}[Proof of Proposition]
   Let $\xi'$ be the subpath of $\xi$ such that $\gamma$ is an $\epsilon$-detour for $\xi'$, and write $\xi = \xi_1 * \xi' * \xi_2$.
   Let us also abuse notation and denote by $\pi^*$ the subpath of $\pi^*$ contained in $\xi \cup \gamma$ which has the same endpoints as $\xi$.
   If $\pi^* = \xi$,
   then $\xi_1 * \gamma * \xi_2$ is an $\epsilon$-detour for $\pi^*$; loop-erasing then gives a \emph{self-avoiding} $\epsilon$-detour
   $\gamma'$ for $\xi$ (see the proof of Proposition
   \ref{prop:detourequiv}), so $(\pi^*, \gamma')$ forms a feasible pair.
   If $\pi^* \ne \xi$, then since $\xi$ is an edge geodesic, $\xi$ is a self-avoiding $\epsilon$-detour for $\pi^*$ (see the proof of Proposition
   \ref{prop:detourequiv}), and hence $(\pi^*, \xi)$ forms a feasible pair.
\end{proof}
So it only remains to find such $p$ and $q$.
The idea is that, in order to make up for the slow edges $\pi^*$ runs over when it enters and exits $B(\Sigma R)$,
$\pi^*$ must visit many fast edges; we will then use the pigeonhole principle to conclude that it must contain suitable $p$ and $q$.

Explicitly, first note that since $T^*(\pi^*) \le T^*(\pi') < T(\pi) \le T(\pi^*)$, we have $T(\pi^*) - T^*(\pi^*) > 0$.
Since $w^* \ge w$ on $E(B(\Sigma R))^c \cup S_{\sup}$, $\pi^*$ must therefore contain some edges in $S_I \cup S_{\inf}$.
But note that by construction, any path connecting $S(\Sigma R)$ and $S_I \cup S_{\inf}$ contains a
a subpath which lies in $S_{\sup}$ and connects two points in $B(\Sigma R)$ of distance at least $s \Sigma R > R$.
Since $\pi^*$ starts and ends outside of $B(\Sigma R)$ and visits $S_{\inf} \cup S_I$, it contains at least
two such subpaths, $\alpha$ and $\beta$. We then have
\[
   T^*(\alpha) \ge (\sup - \delta_{\sup}) |\alpha|, T^*(\beta) \ge (\sup - \delta_{\sup}) |\beta|
\]
and 
\[
   T(\alpha) \le (\E w) |\alpha|, T(\beta) \le (\E w) |\beta|
\]
(since $A_2$ holds). Since $w^* \ge w$ on $S_{\sup}$ we then have
\[
   T^*(\pi^* \cap S_{\sup}) - T(\pi^* \cap S_{\sup}) \ge  T^*(\alpha \cup \beta) - T(\alpha \cup \beta)
   \ge (\sup - \E w - \delta_{\sup}) s (2 \Sigma R).
\]
Since $T^*(\pi^* \cap E(B(\Sigma R))^c) - T(\pi^* \cap E(B(\Sigma R))^c) = 0$, in order to ensure that $T^*(\pi^*) - T(\pi^*) < 0$,
it must be the case that
\[
   T(\pi^* \cap (S_{\inf} \cup S_I)) - T^*(\pi^* \cap (S_{\inf} \cup S_I)) > (\sup - \E w - \delta_{\sup}) s (2 \Sigma R).
\]
Since each edge $e$ admits savings at most $w(e) - w^*(e) \le \sup - \inf$, this gives
\[
   |\pi^* \cap (S_{\inf} \cup S_I)| > \frac{ \sup - \E w - \delta_{\sup} }{ \sup - \inf } s(2 \Sigma R).
\]
Moreover, since each component of $S_I$ is composed of less than $2C'$ edges
\[
   |S_I| \le 2C' \frac{2 \Sigma R}{\lfloor \kappa \Sigma R \rfloor} = O(C') = o(R),
\]
and so
\[
   |\pi^* \cap S_{\inf}| \ge \frac{ \sup - \E w - \delta_{\sup} }{ \sup - \inf } s(2 \Sigma R) - o(R);
\]
since by assumption $\kappa < \frac{ \sup - \delta_0 - \E w }{\sup - \inf} s$, for sufficiently large $R$ we have in particular
\[
      |\pi^* \cap S_{\inf}| > 2 \kappa \Sigma R.
\]
Since $S_{\inf} \cup S_I$ has two connected components, at least one of the components contains more than
$\kappa \Sigma R$ edges of $\pi^* \cap S_{\inf}$. But each connected component of $S_{\inf}$ contains at most 
$\lfloor \kappa \Sigma R \rfloor - C'$ edges, so $V(\pi^*)$ must contain some pair of points $p,q$ which lie in 
different connected components of $S_{\inf}$ but in the same connected component of $S_{\inf} \cup S_I$, as desired.
Thus, this construction satisfies \eqref{eq:resampleevents}. 

To see that the construction satisfies the other hypothesis of Proposition \ref{prop:conditionaltovdBK}, note that
\begin{align*}
   \Prob(w^* \in E_w | w) &= 
   \nu([\inf, \inf + \delta_{\inf}))^{|S_{\inf}|} 
   \nu(I_0 \cap (y_0 - \frac{\delta_{\inf}}{2}, y_0 + \frac{\delta_{\inf}}{2}))^{|S_I|} 
   \nu([\sup - \delta_{\sup}, \sup])^{|S_{\sup}|} \\
   &\ge 
   \min\left( \nu([\inf, \inf + \delta_{\inf})), \nu(I_0 \cap (y_0 - \frac{\delta_{\inf}}{2}, y_0 + \frac{\delta_{\inf}}{2}) ), \nu([\sup - \delta_{\inf}, \sup])\right)^{
   D C_1 (\Sigma R)^d}
\end{align*}
is bounded away from $0$ independently of $o_i$ and $x,y$, as desired.

\subsection{Geometric construction: unbounded case} \label{unbddsupp}
Now, suppose $\nu$ has unbounded support. We construct the relevant events $A_i^R$ and configurations $E_w$ and show that
they satisfy \eqref{eq:resampleevents}.
The main challenge for the case that $\nu$ has unbounded support is in ensuring that the beginning and end of our
prescribed path are far enough away from each other that we ``have enough room'' to make a segment and a detour which
don't collide with the rest of the path. Once we construct our prescribed path it will not be hard to force the 
resampled geodesic $\pi^*$ to take it, since we can resample the prescribed path to have very small passage time
and resample the surrounding edges to have arbitrarily large passage time.

Again assume that \eqref{eq:extraassumption} holds, and then choose $\epsilon>0, y_0, I_0$
as in Lemma \ref{lem:technicallemma}. 
Assume that $\nu$ is exponential-subcritical and
let $q > 0$ be the parameter from Lemma \ref{lem:bddawayfrominf}.
Then fix $\sigma > \max(2, \frac{2(\inf + q)}{q})$ and $\Sigma > \sigma$.
The event $A_i^R$ will be constructed as the intersection of five events $A_i^R := A_1 \cap A_2 \cap A_3 \cap A_4 \cap A_5$.
The first event is
\[
   A_1 := \{\mbox{every path } \gamma: v \to w \mbox{ in } B(\Sigma R) \mbox{ with } d(v,w) \ge R \mbox{ satisfies } 
   T(\gamma) \ge (\inf + q)d(v,w)\}.
\]
This evidently only depends on the edges in $E(B(\Sigma R))$. Moreover, by Lemma \ref{lem:bddawayfrominf}, 
for all sufficiently large $R$ we have
\[
   \Prob(A_1^c) \le \sum_{\substack{ v,w \in B(o_i, \Sigma R), \\ d(v,w) \ge R}} \Prob(T(v,w) < (\inf + q)d(v,w))
   \le |B(o_i,\Sigma R)| e^{-c R} \le C_1 R^d e^{-c R} \tendsto{R}{\infty} 0.
\]
For the next event we choose $\delta_{\inf}(R) \tendsto{R}{\infty} 0$ such that $\nu([\inf, \inf+\delta_{\inf}]) > 0$ and
${DC_1(\Sigma R)^d \delta_{\inf}(R) \le 1}$ for all $R$,
and $ {\left(\nu([\inf + \delta_{\inf}(R), \infty))\right)^{DC_1(\Sigma R)^d} \tendsto{R}{\infty} 1}$. (Note that if there is an atom at $\inf$, then eventually
we will have $\delta_{\inf}(R)=0$, but $\delta_{\inf}(R) \ge 0$ always). Note that the second condition
implies in particular that $|E(B(\Sigma R))| \delta_{\inf}(R) \le 1$. We define
\[
   A_2 := \{ w(e) \ge \inf + \delta_{\inf} \mbox{ for all } e \in E(B(\Sigma R)) \}.
\]
This clearly only depends on the weights in $E(B(\Sigma R))$ and the third condition on $\delta_{\inf}(R)$ implies that 
\[
   \Prob(A_2) = \nu([\inf + \delta_{\inf}, \infty))^{|E(B(\Sigma R)|} \ge \nu([\inf + \delta_{\inf}, \infty))^{DC_1(\Sigma R)^d} \tendsto{R}{\infty} 1.
\]
For the third event, we choose $M(R) \tendsto{R}{\infty} \infty$ such that $\nu^{*DC_1(\Sigma R)^d}([0,M(R)]) \tendsto{R}{\infty} 1$.
We set
\[
   A_3 := \left\{ \sum_{e \in E(B(\Sigma R))} w(e) \le M \right\}.
\]
It is clear by the choice of $M(R)$ that $\Prob(A_3) \tendsto{R}{\infty} 1$. Also note that since $\nu$ is assumed to have infinite support,
$\nu( (M(R), \infty)) > 0$ for all $R$.

Let us call a value $p \in \supp \nu$ \emph{$(\delta,\eta)$-resamplable} if $\nu([p,p+\delta)) \ge \eta$. 
Set $\delta_{sim}(R) := (D C_1 R^d)^{-1}$.
Then, using Proposition \ref{prop:resampling} below, choose $\eta(R) > 0$ such that
\[
   \nu( \{ p : p \mbox{ is } (\delta_{sim}(R), \eta(R)) \mbox{-resamplable} \} )^{DC_1 R^d} \ge 1 - e^{-R}.
\]
Set
\[
   A_4 := \left\{ w(e) \mbox{ is } (\delta_{sim},\eta) \mbox{-resamplable for all } e \in E(B(\Sigma R)) \right\}.
\]
Clearly $A_4$ only depends on weights of edges in $E(B(\Sigma R))$, and by our choice of $\eta(R)$ we have
\[
   \Prob(A_4) \ge 1 - e^{-R} \tendsto{R}{\infty} 1.
\]

The event $A_5$ is more complicated to describe, so we delay its description and the proof that $\Prob(A_5) \tendsto{R}{\infty} 1$
until the end of the section.

Next we describe the construction of $E_w$. Denote by $\pi$ the geodesic from $x$ to $y$, and denote by $v$ and $w$
the first vertex of $\pi$ which lies in $B(\Sigma R)$ and the last vertex of $\pi$ which lies in $B(\Sigma R)$, respectively.
As will be proved in Lemma \ref{lem:a5} at the end of the section, the event $A_5$ implies that, for some $\Sigma R \ge r \ge \sigma R$,
we have disjoint self-avoiding paths $\alpha$ and $\beta$ with the following properties:
\begin{enumerate} \label{pathconditions}
   \item $\alpha$ starts at $x$ and ends at a point $v' \in S(r)$; moreover $V(\alpha) \cap B(r-1) = \emptyset$.
   \item $\beta$ starts at a point $w' \in S(r)$ and ends at $y$; moreover $V(\beta) \cap B(r-1) = \emptyset$.
   \item $d_{E(B(r))}(v',w') > K := 4C(1 + \epsilon)$.
   \item $\alpha$ coincides with $\pi$ until its last entrance into $B(\Sigma R)$
            and $\beta$ coincides with $\pi$ after its first exit from $B(\Sigma R)$.
            Explicitly,
            Choose $\tilde{v}$ to be the last entrance of $\alpha$ into $B(\Sigma R)$, so that $\alpha_{\tilde{v},v'}$
            is the connected component of $E(B(\Sigma R)) \cap \alpha$ containing $v'$.
            Similarly choose $\tilde{w}$ to be the first exit of $\beta$ from $B(\Sigma R)$, so that 
            $\beta_{w',\tilde{w}}$ is the connected component of $E(B(\Sigma R) \cap \beta$ containing $w'$.
            We have that $\tilde{v},\tilde{w} \in V(\pi)$, and $\pi_{x,\tilde{v}} = \alpha_{x,\tilde{v}}$
            and $\pi_{\tilde{w},y} = \beta_{\tilde{w},y}$.
   \item Let $v_r \in S(r)$ be the vertex of $\pi$ immediately preceding the first vertex of $\pi$ which lies in $B(r-1)$, 
            and let $w_r \in S(r)$ be the vertex of $\pi$ immediately following the last vertex of $\pi$ which lies in $B(r-1)$.
            Then $|\alpha_{\tilde{v},v'}| \le |\pi_{\tilde{v},v_r}|$ and $|\beta_{w',\tilde{w}}| \le |\pi_{w_r,\tilde{w}}|$.
\end{enumerate}
Now choose edge-geodesics $[v',o_i]$ from $v'$ to $o_i$ and $[o_i,w']$ from $o_i$ to $w'$.
Again let $C$ be such that every self-avoiding path of length $C$ admits a self-avoiding $\epsilon$-detour.
Let $a$ be the vertex of $[v',o_i]$ which is distance $C(1+\epsilon)$ from $v'$.
Let $b$ be the vertex of $[v',o_i]$ which is distance $C(2+\epsilon)$ from $v'$.
Then $[a,b]:=[v',o_i]_{a,b}$ is a self-avoiding path of length $C$, and hence it admits a self-avoiding $\epsilon$-detour $\gamma$.
\begin{prop}
   $\gamma$ is contained in $B(r-1)$ and $V(\gamma) \cap V([o_i,w']) = \emptyset$.
\end{prop}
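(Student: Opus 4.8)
\emph{Plan.} The argument is radial bookkeeping on top of the triangle inequality, exploiting that $\gamma$ is a short perturbation (recall such a detour has $|\gamma|\le C(1+\epsilon)$) of the subsegment $[a,b]$ of the edge-geodesic $[v',o_i]$. Since $[v',o_i]$ is a geodesic and $a,b$ lie on it at distances $C(1+\epsilon)$ and $C(2+\epsilon)$ from $v'$, we have $d(o_i,a)=r-C(1+\epsilon)$ and $d(o_i,b)=r-C(2+\epsilon)$; this is where one uses that $R$ is large, so that $r\ge \sigma R\ge C(2+\epsilon)$ and these vertices actually exist on the geodesic. Both $\gamma$'s endpoints $a,b$ thus lie in $B(r-1)$.

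\emph{Step 1: $\gamma\subseteq E(B(r-1))$.} First I would bound $d(o_i,u)$ for each $u\in V(\gamma)$. Splitting the self-avoiding path $\gamma$ at $u$ gives $d(u,a)+d(u,b)\le |\gamma_{a,u}|+|\gamma_{u,b}|=|\gamma|\le C(1+\epsilon)$. If $d(u,a)\le C(1+\epsilon)-1$ then $d(o_i,u)\le d(u,a)+d(o_i,a)\le (C(1+\epsilon)-1)+(r-C(1+\epsilon))=r-1$; otherwise $d(u,a)=C(1+\epsilon)$, which forces $d(u,b)=0$, i.e.\ $u=b$, and then $d(o_i,u)=r-C(2+\epsilon)\le r-1$. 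In either case $u\in B(r-1)$, so every edge of $\gamma$ lies in $E(B(r-1))$.

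\emph{Step 2: $V(\gamma)\cap V([o_i,w'])=\emptyset$.} I would argue by contradiction, using the separation hypothesis $d_{E(B(r))}(v',w')>K=4C(1+\epsilon)$ (property (3) of the paths $\alpha,\beta$). Suppose $u\in V(\gamma)\cap V([o_i,w'])$. From the detour, $d(u,v')\le d(u,a)+d(a,v')\le C(1+\epsilon)+C(1+\epsilon)=2C(1+\epsilon)$. Since $u$ lies on the length-$r$ edge-geodesic $[o_i,w']$, we have $d(o_i,u)+d(u,w')=r$, while $d(o_i,u)\ge d(o_i,v')-d(v',u)\ge r-2C(1+\epsilon)$; hence $d(u,w')\le 2C(1+\epsilon)$. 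Now concatenate three walks lying in $B(r)$: the initial segment of $[v',o_i]$ from $v'$ to $a$ (length $C(1+\epsilon)$; its vertices have distance between $r-C(1+\epsilon)$ and $r$ from $o_i$), the subpath $\gamma_{a,u}$ (length $\le |\gamma|\le C(1+\epsilon)$; contained in $B(r-1)$ by Step 1), and the terminal segment of $[o_i,w']$ from $u$ to $w'$ (length $d(u,w')\le 2C(1+\epsilon)$; its vertices have distance $\le r$ from $o_i$). The result is a walk from $v'$ to $w'$ using only edges of $E(B(r))$ and having at most $C(1+\epsilon)+C(1+\epsilon)+2C(1+\epsilon)=4C(1+\epsilon)=K$ edges, so $d_{E(B(r))}(v',w')\le K$, contradicting property (3). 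Hence no such $u$ exists.

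I do not expect a genuine obstacle here; the only point requiring care is the radial bookkeeping in Step 2 — verifying that each of the three concatenated pieces really lies in $E(B(r))$ — and, in Step 1, that $\gamma$ lies in $E(B(r-1))$ rather than merely $E(B(r))$, which is precisely why the case split $d(u,a)\le C(1+\epsilon)-1$ versus $u=b$ is needed (and why one uses that graph distances and the quantities $C(1+\epsilon),C(2+\epsilon)$ are integers). Everything else is the triangle inequality combined with the bound $|\gamma|\le C(1+\epsilon)$ and the bottleneck-type separation hypothesis.
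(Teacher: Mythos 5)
Your proof is correct and follows essentially the same strategy as the paper: Step 2 reduces to a triangle-inequality bound showing $d_{E(B(r))}(v',w')\le K$, using that $a$ and the putative intersection point both lie on edge-geodesics to $o_i$. (Step 1 can be streamlined: for any $u\in V(\gamma)$, $d(o_i,u)\le d(o_i,b)+d(b,u)\le (r-C(2+\epsilon))+C(1+\epsilon)=r-C\le r-1$, which avoids both the case split and the integrality concern you raised.)
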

\begin{proof}
   The first claim follows from the fact that $\gamma$ has length at most $C(1+\epsilon)$;
   To see the second claim, suppose to the contrary
   that there was some $z \in V(\gamma) \cap V([o_i,w'])$. Since $d_{B(r)}(z,a) \le C(1+\epsilon)$ and $z$ and $a$ both lie on edge-geodesics
   to $o_i$, we have that 
   \begin{align*}
      |d_{B(r)}(v',a) - d_{B(r)}(w',z)| &= |[d_{B(r)}(v',o_i) - d_{B(r)}(a,o_i)] - [d_{B(r)}(w',o_i) - d_{B(r)}(z,o_i)]|  \\
      &= |d_{B(r)}(a,o_i) - d_{B(r)}(z,o_i)| \le d_{B(r)}(a,z) \le C(1+\epsilon),
   \end{align*}
   and therefore
   \[
      d_{B(r)}(w',z) \le d_{B(r)}(v',a) + C(1+\epsilon) = 2C(1 + \epsilon),
   \]
   hence
   \[
      d_{B(r)}(v',w') \le d_{B(r)}(v',a) + d_{B(r)}(a,z) + d_{B(r)}(z,w') \le 2C(1+\epsilon) + 2C(1 + \epsilon) = 4C(1 + \epsilon) = K,
   \]
   contradicting the fact that $d(v',w') > K$.
\end{proof}
Set $b'$ to be the vertex in $V([v',o_i])$ which has distance $C' = C(3 + 2\epsilon)$ from $v'$.
Set $o'$ to be the first intersection of $V([v',o_i])$ with $V([o_i,w'])$; the previous proposition shows
that $o'$ is strictly closer to $o_i$ than $b'$.
Define as usual $[v',o']:=[v',o_i]_{v',o'}$, $[o',w']:=[o_i,w']_{o',w'}$, $[v',b'] := [v',o_i]_{v',b'}$.
We now define the following subsets of $E(B(\Sigma R))$:
\begin{align*}
   S_I &:= [v',b'] \cup \gamma, \\
   S_{\inf} &:= \left(\alpha_{\tilde{v},v'} * [v',o'] * [o',w'] * \beta_{w',\tilde{w}}\right) \setminus S_I, \\
   S_{sim} &:= (\alpha \cup \beta) \cap E(B(\Sigma R)) \setminus S_{\inf} \\
   S_{M} &:= E(B(\Sigma_R)) \setminus (S_I \cup S_{\inf} \cup S_{sim}).
\end{align*}
Note that these sets are all pairwise disjoint and cover $E(B(\Sigma R))$.
Now we can finally define our set of configurations $E_w$:
\begin{align*}
   E_w :=
   \left \{ \omega \in [0, \infty)^{E(B(\Sigma R))} : 
                 \omega(e) \in 
                      \begin{array}{lc}
                         I_0 & e \in S_I \\ \relax
                         [\inf, \inf + \delta_{\inf}] & e \in S_{\inf} \\ \relax
                         [w(e), w(e) + \delta_{sim}) & e \in S_{sim} \\ \relax
                         [M, \infty) & e \in S_M 
                      \end{array} 
   \right \}.
\end{align*}
(We have used the assumption that $w \in A_1 \cap A_2 \cap A_3 \cap A_4 \cap A_5$ to construct $E_w$, and this is really the only case
we care about; off of this event we may define $E_w = \emptyset$).
\begin{figure}[t]
   \centering
   \includegraphics[scale=.45]{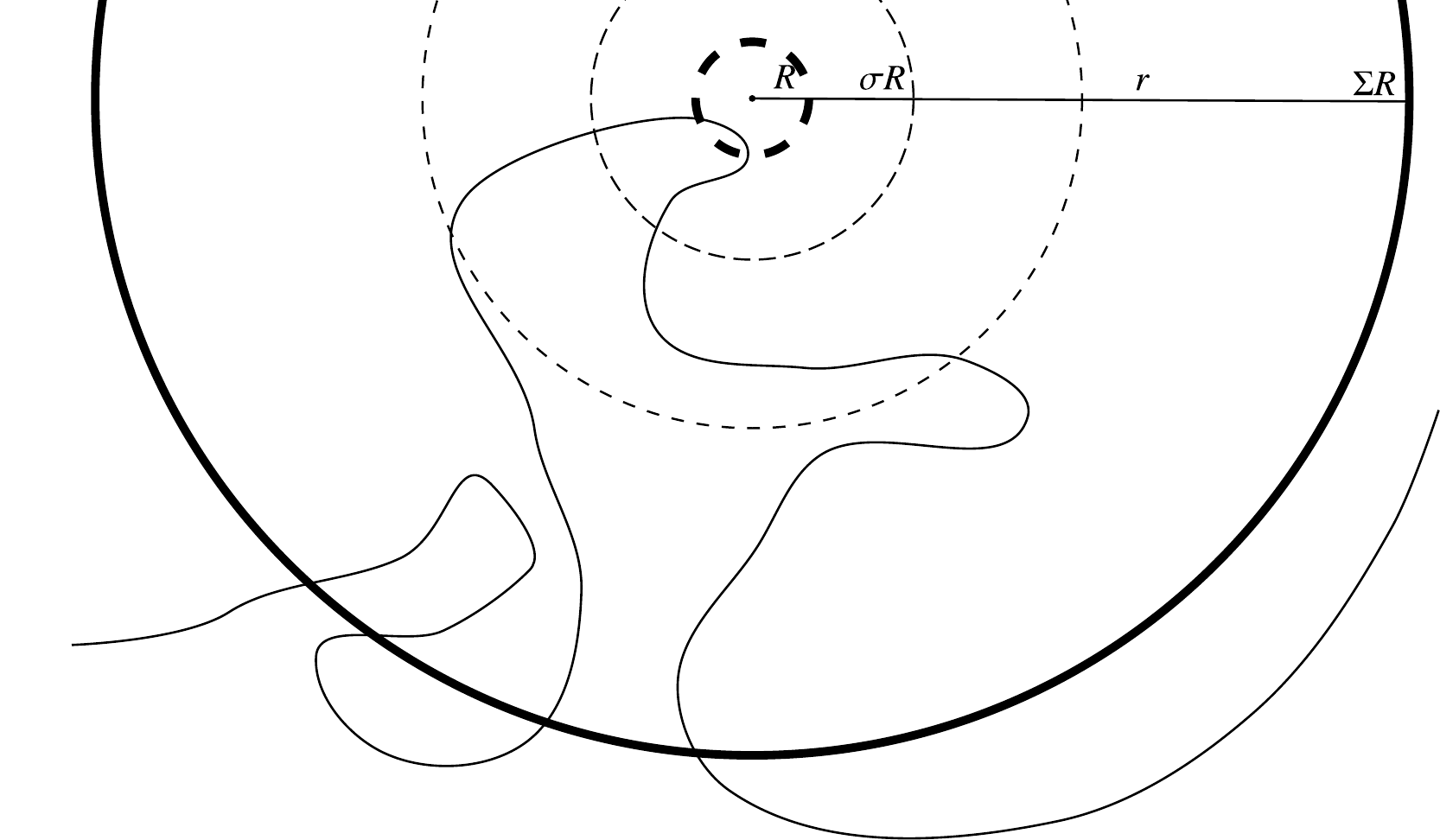}
   \includegraphics[scale=.45]{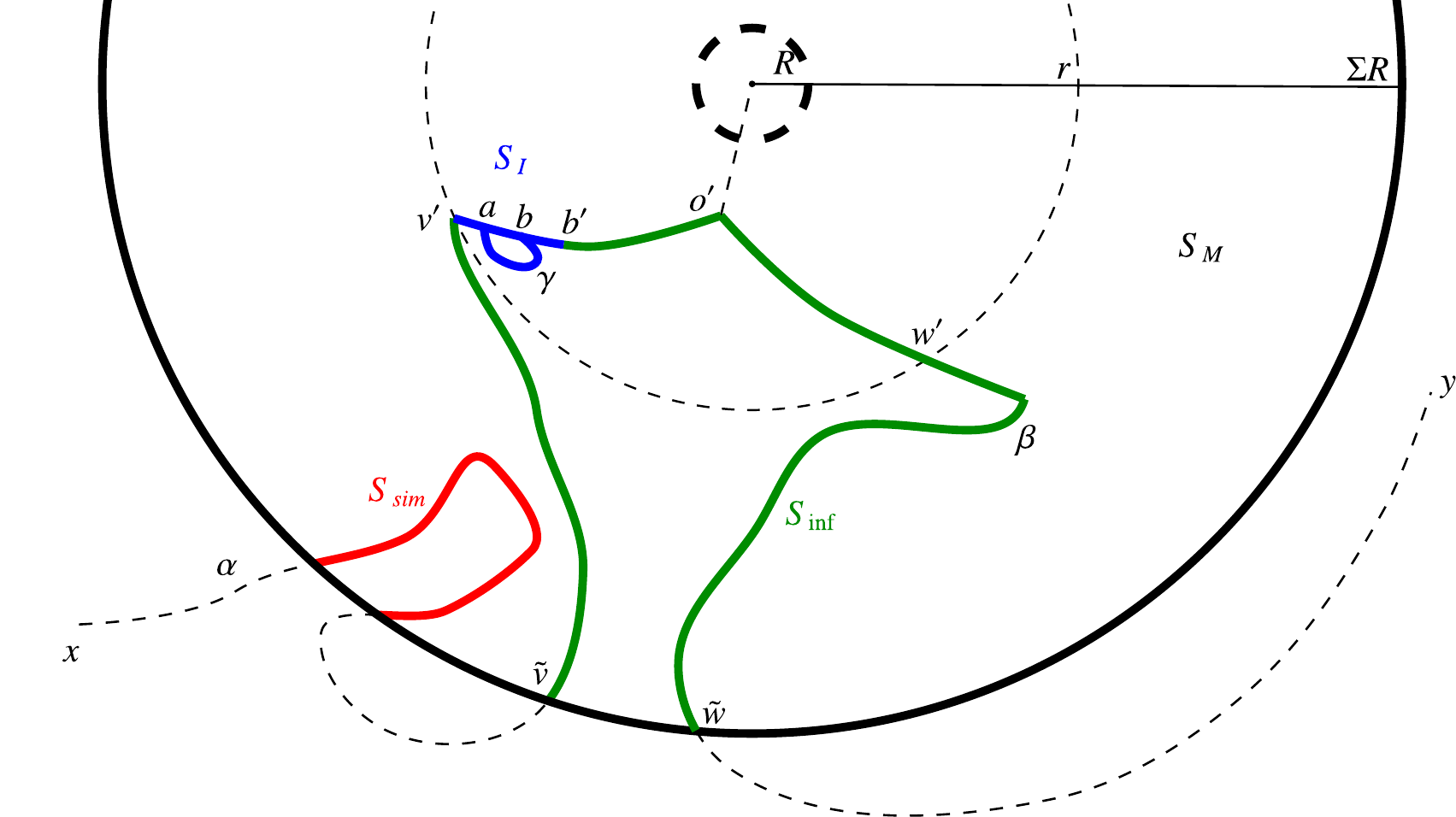}
   \caption{If the $T$-geodesic from $x$ to $y$ is as in the diagram on the left, the prescribed weights $E_w$
   might be given by the diagram on the right.}
   \label{fig:unboundedconstruction}
\end{figure}
%
   We now show that this choice of $A_i$ and $E_w$ satisfies \eqref{eq:resampleevents}.
   Set
   \[
      \pi' := \alpha * [v',o'] * [o',w'] * \beta.
   \]
   First we show that $T^*(\pi') < T(\pi)$.
   By construction we have $\pi'_{x,\tilde{v}} = \pi_{x,\tilde{v}}$ and $\pi'_{\tilde{w},y} = \pi_{\tilde{w},y}$.
   Moreover, each edge in either of those paths is also by construction either in $E(B(\Sigma R))^c$ or $S_{sim}$, and hence 
   when $w^* \in E_w$,
   \begin{align*}
      T^*(\pi'_{x,\tilde{v}} \sqcup \pi'_{\tilde{w},y}) &\le T(\pi'_{x,\tilde{v}} \sqcup \pi'_{\tilde{w},y}) + |E(B(\Sigma R))| \delta_{sim} \\
      &\le T(\pi_{x,\tilde{v}} \sqcup \pi_{\tilde{w},y}) + 1.
   \end{align*}
   Next, since $|\alpha_{\tilde{v},v'}| \le |\pi_{\tilde{v},v_r}|$, $|\beta_{w',\tilde{w}}| \le |\pi_{w_r,\tilde{w}}|$, since $A_2$ holds,
   and since $\alpha_{\tilde{v},v'}, \beta_{w',\tilde{w}} \subset S_{\inf}$, we have
   \[
      T^*(\alpha_{\tilde{v},v'} \sqcup \beta_{w',\tilde{w}}) \le 
      (\inf + \delta_{\inf})|\alpha_{\tilde{v},v'} \sqcup \beta_{w',\tilde{w}}|
      \le (\inf + \delta_{\inf})|\pi_{\tilde{v},v_r} \sqcup \pi_{w_r,\tilde{w}}|
      \le T(\pi_{\tilde{v},v_r} \sqcup \pi_{w_r,\tilde{w}}).
   \]
   Now, since $\pi'_{v',w'} \setminus [v',b'] \subset S_{\inf}$ and $[v',b'] \subset S_I$, we have
   \[
      T^*(\pi'_{v',w'}) \le (\inf + \delta_{\inf})2r + (\sup I_0)C(3+2\epsilon),
   \]
   while since $A_1$ holds and $\pi_{v_r,w_r}$ starts and ends at $S(r)$ ($r \ge \sigma R > 2R$) and visits $S(R)$, we have
   \[
      T(\pi_{v_r,w_r}) \ge 2(\inf + q)(r - R),
   \]
   so that 
   \begin{align} \label{eq:pathfaster}
      T(\pi_{v_r,w_r}) - T^*(\pi'_{v',w'}) &\ge 2R[ (q - \delta_{\inf})\frac{r}{R} - (\inf + q) ] - (\sup I_0)C(3+2\epsilon) \nonumber \\
      &\ge 2R[ (q - \delta_{\inf}) \sigma - (\inf + q) ] - (\sup I_0)C(3+2\epsilon),
   \end{align}
   For $R$ sufficiently large we have $\delta_{\inf} < q/2$ so that 
   \[
      (q - \delta_{\inf}) \sigma - (\inf + q) > (q/2) \sigma - (\inf + q) > 0,
   \]
   that is, the coefficient of $R$ \eqref{eq:pathfaster} is strictly positive.
   Altogether we have
   \[
      T(\pi) - T^*(\pi') \ge 2R[ (q / 2) \sigma - (\inf + q) ] - (\sup I_0)C(3+2\epsilon) - 1 \gessim R,
   \]
   so in particular $T^*(\pi') < T(\pi)$ for all sufficiently large $R$.
   
   From this, we can conclude that the $T^*$-geodesic $\pi^*$ must contain some edges in $S_{\inf} \cup S_I$.
   For suppose it did not;
   since $w^* \ge w$ on $(S_{\inf} \cup S_I)^c$, we would have
   \[
      T^*(\pi^*) \ge T(\pi^*) \ge T(\pi) > T^*(\pi'),
   \]
   contradicting $T^*$-geodesicity of $\pi^*$.
   
   Next, we know that $\pi^*$ contains no edge in $S_M$. For suppose that it did; then, since $A_3$ holds,
   \begin{align*}
      T^*(\pi^*) &\ge T^*(\pi^* \cap E(B(\Sigma R))^c) + M \\
                     &\ge T(\pi^* \cap E(B(\Sigma R))^c) + \sum_{e \in E(B(\Sigma R))} w(e) \\
                     &\ge T(\pi^*) \ge T(\pi) > T^*(\pi'),
   \end{align*}
   again contradicting $T^*$-geodesicity of $\pi^*$.
   
   Note that $S_{\inf} \cup S_I$ and $S_{sim}$ by construction share no vertices in common, and so we see
   that $\pi^*$, as a self-avoiding path which enters $S_{\inf}$, does not intersect $S_M$ and eventually exits
   $B(\Sigma R)$, must contain $\pi'_{\tilde{v},v'}$ and $\pi'_{b',\tilde{w}}$ or their reverses as a subpath.
   In particular, some subpath of $\pi^*$ has endpoints $v',b'$ and is restricted to $S_I = [v',b'] \cup \gamma$
   and hence by Proposition \ref{prop:detourtodetour} $S_I$ contains a feasible pair for $\pi^*$, and we are done
   showing that \eqref{eq:resampleevents} is satisfied.

To complete the proof that the $A_i^R, E_w$ satisfy 
the hypotheses of Proposition \ref{prop:conditionaltovdBK},
it remains to prove the ``resampling lemma'' 
relevant to $A_4$, to describe and prove the relevant properties of $A_5$,
and to give a lower bound on the conditional probability of $\{ w^* \in E_w\}$.
\begin{prop} \label{prop:resampling}
   For any fixed $\delta > 0$, we have
   \[
      \lim_{\eta \to 0} \nu( \{ p : p \mbox{ is } (\delta,\eta) \mbox{-resamplable}\}) = 1.
   \]
\end{prop}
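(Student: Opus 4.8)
The idea is to rephrase the assertion in terms of the function $g(p) := \nu([p,p+\delta))$ and to reduce everything to a single measure-zero claim about $\nu$. A value $p$ is $(\delta,\eta)$-resamplable exactly when $g(p)\ge\eta$, so the quantity in the proposition is $\nu(\{p\ge 0 : g(p)\ge\eta\}) = 1 - \nu(A_\eta)$, where $A_\eta := \{p\ge 0 : g(p)<\eta\}$. Writing $F(t):=\nu([0,t])$ for the cumulative distribution function, we have $g(p)=F((p+\delta)^-)-F(p^-)$, which is left-continuous (hence Borel) in $p$ as a difference of left-continuous nondecreasing functions, so $A_\eta$ is Borel. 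As $\eta\downarrow 0$ the sets $A_\eta$ decrease, and $\bigcap_{\eta>0}A_\eta=\{p : g(p)=0\}=:B$ since $g\ge 0$. Because $\nu$ is a finite (probability) measure, downward continuity of measure gives $\nu(A_\eta)\downarrow\nu(B)$, and therefore $\nu(\{p : g(p)\ge\eta\}) = 1-\nu(A_\eta)\uparrow 1-\nu(B)$. Hence it suffices to prove $\nu(B)=0$.

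\textbf{Key step: $\nu(B)=0$.} First, $\nu(B)=\nu(B\cap\supp\nu)$, since $[0,\infty)\setminus\supp\nu$ is relatively open and $\nu$-null. Now I claim $B\cap\supp\nu$ is $\delta$-separated, hence countable. Indeed, if $p\in B$ then $\nu((p,p+\delta))=0$, so the open interval $(p,p+\delta)$ contains no point of $\supp\nu$; consequently, whenever $p,p'\in B\cap\supp\nu$ with $p<p'$, we must have $p'\ge p+\delta$. A $\delta$-separated subset of $[0,\infty)$ is countable, and each of its points $p$ satisfies $\nu(\{p\})\le\nu([p,p+\delta))=0$ because $p\in B$. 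Summing over this countable set gives $\nu(B\cap\supp\nu)=0$, and therefore $\nu(B)=0$, completing the argument.

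\textbf{Remarks on difficulty.} There is no substantial obstacle here; the proof is entirely elementary. The only points deserving a moment's care are (i) the measurability of $g$, so that $A_\eta$ is a legitimate Borel set to which $\nu$ can be applied, and (ii) the use of downward continuity of measure, which relies on $\nu$ being finite (so that $\nu(A_1)<\infty$). Everything else is a routine support argument.
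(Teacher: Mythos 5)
Your proof is correct. The opening reduction is essentially the same as the paper's: both use continuity of measure (you via decreasing sets $A_\eta=\{g<\eta\}$ and complements, the paper via the increasing sets $\{g>\eta\}$) to reduce the proposition to showing that $N:=\{p:\nu([p,p+\delta))=0\}$ is $\nu$-null. Where you genuinely diverge is in proving $\nu(N)=0$. The paper does this without mentioning the support: it builds, by a greedy/inductive construction (taking infima and, when the infimum is not attained in $N$, adjoining a sequence of points of $N$ approaching it), a countable set $X\subset N$ whose intervals $[p,p+\delta)$, $p\in X$, cover $N$, and then sums the zero measures of these intervals. You instead intersect with $\supp\nu$: since $p\in N$ forces $(p,p+\delta)\cap\supp\nu=\emptyset$, the set $N\cap\supp\nu$ is $\delta$-separated, hence countable, and each of its points has $\nu(\{p\})\le\nu([p,p+\delta))=0$; combined with $\nu([0,\infty)\setminus\supp\nu)=0$ this gives $\nu(N)=0$. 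Your route is shorter and avoids the somewhat fiddly covering construction, at the modest cost of invoking two standard facts (that the complement of the support is null, which uses second countability of $[0,\infty)$, and Borel measurability of $p\mapsto\nu([p,p+\delta))$, which the paper leaves implicit anyway); the paper's construction is self-contained and works verbatim without reference to the support. Both arguments are complete and elementary.
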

\begin{proof}
   By continuity of measure we have that
   \[
      \lim_{\eta \to 0} \nu( \{p : \nu([p,p+\delta)) > \eta ) = \nu( \{p : \nu([p,p+\delta)) > 0),
   \]
   so it will suffice to show that
   \[
      \nu( \{ p : \nu([p,p+\delta)) = 0 \}) = 0.
   \]
   Set $N := \{ p : \nu([p,p+\delta)) = 0 \}$. We claim that there is a countable subset $X \subset N$ such that
   \[
      N \subset \bigcup_{p \in X} [p,p+\delta).
   \]
   Once we know this, the proposition follows, since then
   \[
      \nu(N) \le \nu \left( \bigcup_{p \in X} [p,p+\delta) \right) \le \sum_{p \in X} \nu([p,p+\delta)) = 0.
   \]
   To construct $X$, first set $X_0 := \emptyset$. 
   For each $i \ge 0$, consider $n_{i+1} := \inf N \setminus \left( \bigcup_{p \in X_i} [p,p+\delta) \right)$.
   If $n_{i+1} \in N$, then set $X_{i+1} := X_i \cup \{n_{i+1}\}$. Otherwise choose a (countable) sequence $S_{i+1}$
   of points of $N$ approaching $n_{i+1}$ and set $X_{i+1} := X_i \cup S_{i+1}$.
   It is simple to inductively check that each $X_i$ is countable and that $\bigcup_{p \in X_i} [p,p+\delta)$
   covers at least $N \cap [0,i\delta)$, so $X := \cup_{i=1}^{\infty} X_i$ is a countable subset of $N$
   with $\bigcup_{p \in X} [p,p+\delta) \supset N$, as desired.
\end{proof}

Now we describe the event $A_5$ and its properties.
The intuition is as follows: considering the $T$-geodesic $\pi: x \to y$, for each ball $B(r)$, if the first entrance of $\pi$ into that ball
is far from the last exit of $\pi$ from that ball, then we have ``enough room'' to do our construction, that is, we have
paths satisfying (1)-(5) above. So we want to bound the probability that, to the contrary, for all radii $r$, the first entry and last exit
are close. In fact, an even weaker event gives us ``enough room,'' and we bound the probability of the failure of this event
by showing that it would entail that $\pi$ is constrained to a ``narrow'' subgraph as it crosses $B(o_i, \Sigma R)$, making it unlikely
that the geodesic would enter so deep into $B(o_i, \Sigma R)$ before turning around.

For the formal construction of the event, first, given a pair of points $p,q \in S(\Sigma R)$, take edge-geodesics $[p,o_i]$ and $[q,o_i]$ from $p$ and $q$ respectively to $o_i$.
For each $\Sigma R \ge r \ge \sigma R$, let $p^r$ and $q^r$ be the unique elements of $V([p,o_i]) \cap S(r)$ 
and $V([q,o_i]) \cap S(r)$ respectively. We then define
\[
   S^r_0(p,q) := \left(B_{E(B(r))}(p^r,3K) \cup B_{E(B(r))}(q^r,3K)\right) \cap S(r),
\]
where $K := 4C(1 + \epsilon)$. Then for each $\ell \ge 0$ we define
\[
   S^r_{\ell}(p,q) := \left\{ z \in B(\Sigma R) \setminus B(r-1) : d_{E(B(\Sigma R) \setminus B(r-1))}(z,S^r_0(p,q)) = \ell \right\}.
\]
Lastly, for $\Sigma R - 3K \ge r \ge \sigma R$, set
\[
   S^r(p,q) := \bigsqcup_{\ell=0}^{3K} S^r_{\ell}(p,q)
\]
and define the event
\[
   C^r(p,q) := 
   \left \{ \begin{array}{c} \mbox{ there exist paths }\gamma_1,\gamma_2 \mbox{ in } S^r(p,q) \mbox{ such that } \\
                                      \mbox{ the endpoints } a_1,b_1 \mbox{ of } \gamma_1 \mbox{ lie in } S^r_{2K} \mbox{ and} \\
                                      |\gamma_1| \le K, \mbox{ one endpoint of } \gamma_2 \mbox{ lies in} \\
                                      S^r_{2K} \mbox{ and the other lies in } S^r_0, \mbox{ and } T(\gamma_2) \le T(\gamma_1)
             \end{array}
   \right\}.
\]
We now define the event $A_5$ by
\[
   A_5 := \bigcap_{\substack{p,q \in S(\Sigma R), \\ d_{B(\Sigma R)}(p,q) \le K}} 
                 \left( \bigcap_{r=\sigma R}^{\Sigma R - 3K} C^r(p,q) \right)^c,
\]
that is, $A_5$ is the event that for each pair $p,q$ of close points on $S(\Sigma R)$, $C^r(p,q)$ fails for at least some $r$.
Note that $A_5$ only depends on the weights of edges in $E(B(\Sigma R))$.
\begin{prop}
   There exists some constant $\rho < 1$ depending only on the degree $D$ of $G$, $\nu$, and $K$ such that 
   \[
      \Prob(C^r(p,q)) \le \rho
   \]
   for all $R,p,q,r$.
\end{prop}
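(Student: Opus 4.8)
The plan is to prove the uniform bound by exhibiting, for every $R,p,q,r$, a single explicit event $\mathcal{G} = \mathcal{G}(R,p,q,r)$ which forces $C^r(p,q)$ to fail and which has probability bounded below by a constant $c = c(D,\nu,K) > 0$ not depending on $R,p,q,r$; then $\rho := 1-c < 1$ works. The event $\mathcal{G}$ will simply demand that every edge of $E(S^r(p,q))$ has weight lying in a fixed thin ``multiplicative window'' $[v,\tfrac{3}{2}v]$ for a suitable $v>0$. The mechanism is purely a length count: the defining data of $C^r(p,q)$ requires $|\gamma_1|\le K$, whereas any $\gamma_2$ as in the definition has $|\gamma_2|\ge 2K$. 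Indeed, one endpoint of $\gamma_2$ lies in $S^r_{2K}(p,q)$, i.e.\ at $d_{E(B(\Sigma R)\setminus B(r-1))}$-distance exactly $2K$ from $S^r_0(p,q)$, its other endpoint lies in $S^r_0(p,q)$, and $\gamma_2$ is a path inside $S^r(p,q)\subseteq B(\Sigma R)\setminus B(r-1)$ (each $S^r_\ell(p,q)$ is by definition a subset of $B(\Sigma R)\setminus B(r-1)$, and a ``path in $S^r(p,q)$'' only uses edges of $E(S^r(p,q))$), hence $|\gamma_2|$ is at least that distance, namely $2K$. On $\mathcal{G}$ we then get $T(\gamma_2)\ge v\,|\gamma_2|\ge 2Kv$ while $T(\gamma_1)\le \tfrac{3}{2}v\,|\gamma_1|\le \tfrac{3}{2}Kv < 2Kv$, so $T(\gamma_2)>T(\gamma_1)$, contradicting the requirement $T(\gamma_2)\le T(\gamma_1)$ in the definition of $C^r(p,q)$. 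Thus no admissible pair $(\gamma_1,\gamma_2)$ exists on $\mathcal{G}$, i.e.\ $\mathcal{G}\subseteq (C^r(p,q))^c$ (this includes the degenerate cases where $S^r_{2K}(p,q)$ or $S^r_0(p,q)$ is empty, where $C^r(p,q)$ is already impossible).

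To make this quantitative I would first record that $|E(S^r(p,q))|\le M$ for a constant $M=M(D,K)$ independent of $R,p,q,r$: $S^r_0(p,q)$ is contained in a union of two balls of radius $3K$, so it has at most $2(D+1)^{3K}$ vertices, and $S^r(p,q)=\bigsqcup_{\ell=0}^{3K}S^r_\ell(p,q)$ is contained in the $3K$-neighborhood of $S^r_0(p,q)$, so $|S^r(p,q)|\le 2(D+1)^{6K}$ and hence $|E(S^r(p,q))|\le D(D+1)^{6K}=:M$. Next I would produce $v>0$ with $\nu([v,\tfrac{3}{2}v])>0$: since we are in the unbounded-support case, $\nu$ has infinite support, in particular $\nu\ne\delta_0$, so $\supp\nu$ contains some $v_0>0$; choosing $\delta\le v_0/5$ we have $\nu([v_0-\delta,v_0+\delta])>0$ (as $v_0\in\supp\nu$) and $v_0+\delta\le\tfrac{3}{2}(v_0-\delta)$, so $v:=v_0-\delta>0$ satisfies $\nu([v,\tfrac{3}{2}v])\ge\nu([v_0-\delta,v_0+\delta])>0$. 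Setting $\mathcal{G}:=\{\,w(e)\in[v,\tfrac{3}{2}v]\text{ for all }e\in E(S^r(p,q))\,\}$, and using that the weights are i.i.d.\ $\nu$ with $E(S^r(p,q))\subseteq E(B(\Sigma R))$, we obtain
\[
   \Prob(\mathcal{G}) = \nu\!\left(\left[v,\tfrac{3}{2}v\right]\right)^{|E(S^r(p,q))|}\ \ge\ \nu\!\left(\left[v,\tfrac{3}{2}v\right]\right)^{M}\ =:\ c\ >\ 0,
\]
with $c$ depending only on $D$, $K$, $\nu$. Combined with $\mathcal{G}\subseteq(C^r(p,q))^c$, this gives $\Prob(C^r(p,q))\le 1-c=:\rho<1$ for all $R,p,q,r$.

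I do not expect a genuine obstacle here: the proposition is elementary once one reads the definitions carefully, and the only points requiring attention are the two bookkeeping facts (i) that $\gamma_2$ is forced to have at least $2K$ edges — which rests on the layer-distance definition of $S^r_\ell(p,q)$ and on interpreting paths ``in $S^r(p,q)$'' as paths in the induced subgraph on $B(\Sigma R)\setminus B(r-1)$ — and (ii) that $|E(S^r(p,q))|$ is bounded by a constant depending only on $D$ and $K$. The conceptual content is just that $S^r(p,q)$ is a region of uniformly bounded combinatorial size in which the competitor path $\gamma_1$ is at most half as long as the target path $\gamma_2$, so a single prescription of (essentially) constant weights on it always defeats the comparison $T(\gamma_2)\le T(\gamma_1)$.
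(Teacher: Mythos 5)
Your proof is correct. It follows the same overall template as the paper's — bound the size of $S^r(p,q)$ uniformly in terms of $D$ and $K$ (the paper gets the identical bound $|S^r|\le 2(D+1)^{6K}$), then exhibit a prescribed-weights event of uniformly positive probability that rules out every candidate pair $(\gamma_1,\gamma_2)$ — but the middle of the argument is genuinely different. The paper does not give an explicit uniform lower bound on the probability of its bad-for-$C^r$ event: it reduces to the finitely many isomorphism types of the induced subgraph (with the marked subsets $S^r_0$, $S^r_{2K}$), and for each fixed type prescribes a two-class configuration, weights below a threshold $a$ on the set $E'$ of edges lying on paths of length at most $K$ between vertices of $S^r_{2K}$, and weights at least $a$ off $E'$; then $T(\gamma_1)<aK$ since $\gamma_1\subset E'$, while $\gamma_2$ is forced to use at least $K$ edges of $E'^c$, so $T(\gamma_2)\ge aK$, and uniformity of $\rho$ comes from the finiteness of types. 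You instead exploit the sharper length count $|\gamma_2|\ge 2K$ versus $|\gamma_1|\le K$ (which is exactly right, by the layer-distance definition of $S^r_{2K}$ and the fact that a path in $S^r(p,q)$ is in particular a path in $B(\Sigma R)\setminus B(r-1)$), so that a single geometry-independent prescription — all weights of $E(S^r(p,q))$ in a window $[v,\tfrac{3}{2}v]$ of positive $\nu$-mass — already defeats the comparison $T(\gamma_2)\le T(\gamma_1)$, and you get the explicit bound $\rho = 1-\nu([v,\tfrac{3}{2}v])^{D(D+1)^{6K}}$ with no compactness/isomorphism-type step. Both arguments use the unbounded-support (hence non-Dirac) assumption in an equivalent, harmless way; yours is slightly simpler and more quantitative, the paper's is the one that generalizes most directly if one only had the weaker bound $|\gamma_2|\ge K$ available.
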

\begin{proof}
   First note that, since each $S^r_0$ lies in the union of two balls of radius $3K$, $S^r_0$ contains at most $2(D+1)^{3K}$ vertices.
   Since the entirety of $S^r$ lies within distance $3K$ of $S^r_0$, we further have that
   \[
      |S^r| \le |S^r_0| (D+1)^{3K} \le 2(D+1)^{6K}.
   \]
   That is, we have a uniform bound on the possible number of vertices in $S^r$, and so it is not hard to see
   that the subgraph induced by $S^r(p,q)$ can only take on finitely many isomorphism types as all parameters except $D$ and $K$ vary.
   Hence, to show our claim, it suffices to show that for each fixed isomorphism type, $\Prob(C^r(p,q)) < 1$.
   (Here ``isomorphism type'' includes the relevant extra data of which subsets correspond to $S^r_0$ and $S^r_{2K}$,
   but even with this extra data it is easy to see that a bound on the number of vertices implies a bound on the number of 
   possible isomorphism types).
   
   To this end, fix an isomorphism type, and let $E'$ be the set of edges in $S^r$ which lie in some path in $S^r$ of length
   at most $K$ joining two vertices of $S^r_{2K}$. Since $\nu$ is assumed to have unbounded support (in particular it is not Dirac),
   there is some $a>0$ such that $\nu([0,a))>0$ and $\nu([a,\infty)) > 0$. Then the event
   \[
      \{ w(e) < a \mbox{ for all } e \in E', w(e) \ge a \mbox{ for all } e \notin E' \}
   \]
   has nonzero probability. Moreover, this event entails the failure of $C^r(p,q)$, since on it all candidates for $\gamma_1$
   necessarily have edges in $E'$ and hence have $T(\gamma_1) < aK$, while all candidates for $\gamma_2$
   must have at least $K$ edges lying in $E'^c$, and hence $T(\gamma_2) \ge aK > T(\gamma_1)$.
\end{proof}
\begin{prop}
   $\Prob(A_5) \tendsto{R}{\infty} 1$.
\end{prop}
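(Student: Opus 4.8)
The plan is to bound $\Prob(A_5^c)$ by a union bound over the pairs $p,q$ — of which there are only polynomially many in $R$ — and, for each fixed pair, to exploit genuine independence of the events $C^r(p,q)$ across a family of radii $r$ of size $\gessim R$, using the uniform bound $\Prob(C^r(p,q)) \le \rho < 1$ furnished by the previous proposition.

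The one point requiring a little care is the observation that $S^r(p,q)$ lies in a thin annular shell around $S(r)$. Indeed $S^r_0(p,q) \subseteq S(r)$ by definition, and every $z \in S^r_\ell(p,q)$ with $0 \le \ell \le 3K$ is joined to $S^r_0(p,q)$ by a path of length $\ell$ lying in $B(\Sigma R) \setminus B(r-1)$, while $z \notin B(r-1)$; hence $r \le d(o_i,z) \le r + 3K$, so
\[
   S^r(p,q) \subseteq B(o_i, r + 3K) \setminus B(o_i, r-1).
\]
Consequently, if $\sigma R \le r < r' \le \Sigma R - 3K$ with $r' \ge r + 3K + 1$, the vertex sets $S^r(p,q)$ and $S^{r'}(p,q)$ are disjoint, hence so are the edge sets $E(S^r(p,q))$ and $E(S^{r'}(p,q))$. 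Since $C^r(p,q)$ depends only on the weights $\{ w(e) : e \in E(S^r(p,q)) \}$, it follows that the events $\{C^r(p,q)\}_{r \in \mathcal R}$ are mutually independent whenever $\mathcal R \subseteq \{\sigma R, \dots, \Sigma R - 3K\} \cap \Z$ is $(3K+1)$-separated.

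Now, for each pair $(p,q)$, I would fix a $(3K+1)$-separated set $\mathcal R(R) \subseteq \{\sigma R, \dots, \Sigma R - 3K\} \cap \Z$; since $\Sigma > \sigma$, one may take $|\mathcal R(R)| \ge cR$ for a constant $c = c(\sigma,\Sigma,K) > 0$ and all $R$ large. Then, by the independence just noted together with the preceding proposition,
\[
   \Prob\!\left( \bigcap_{r=\sigma R}^{\Sigma R - 3K} C^r(p,q) \right)
   \le \Prob\!\left( \bigcap_{r \in \mathcal R(R)} C^r(p,q) \right)
   = \prod_{r \in \mathcal R(R)} \Prob(C^r(p,q)) \le \rho^{\,cR}.
\]
Finally, since $\#\{(p,q) : p, q \in S(\Sigma R)\} \le |B(o_i, \Sigma R)|^2 \le (C_1(\Sigma R)^d)^2$ is polynomial in $R$, a union bound over the pairs appearing in $A_5$ gives
\[
   \Prob(A_5^c) \le (C_1(\Sigma R)^d)^2 \, \rho^{\,cR} \tendsto{R}{\infty} 0,
\]
so $\Prob(A_5) \tendsto{R}{\infty} 1$. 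There is no genuine obstacle here: the substantive estimate — that a single $C^r(p,q)$ has probability bounded away from $1$, uniformly in all parameters — was already extracted in the preceding proposition, so what remains is the annulus-containment bookkeeping above and a routine union bound.
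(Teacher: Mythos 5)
Your proof is correct and follows essentially the same route as the paper's: for each fixed pair $(p,q)$ select a $\gessim R$-sized family of well-separated radii so that the corresponding $S^r(p,q)$ (and hence the events $C^r(p,q)$) are disjoint and independent, apply the uniform bound $\Prob(C^r(p,q)) \le \rho < 1$ to get exponential decay, and finish with a union bound over the polynomially many pairs. The only difference is a small point of care: you explicitly verify the annulus containment $S^r(p,q) \subseteq B(o_i, r+3K) \setminus B(o_i, r-1)$ to justify disjointness (requiring $(3K+1)$-separation), whereas the paper states the disjointness as evident for $3K$-separated radii.
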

\begin{proof}
   For each fixed $p,q$, note that whenever $S \subset [\sigma R, \Sigma R - 3K] \cap \Z$ is such that each element
   has distance at least $3K$ from every other element, the subgraphs
   $\{ S^r(p,q) : r \in S \}$ are all disjoint and hence the events $\{ C^r(p,q) : r \in S \}$ are all independent.
   Since $K, \sigma, \Sigma$ are constants fixed independent of $R$, it is easy to see that there is some $c_3 >0$ such that
   for all large $R$ we can pick such an $S$ with $|S| \ge c_3 R$, and so
   \begin{align*}
      \Prob \left( \bigcap_{r=\sigma R}^{\Sigma R - 3K} C^r(p,q) \right) &\le \Prob \left( \bigcap_{r \in S} C^r(p,q) \right) \\
      &= \prod_{r \in S} \Prob( C^r(p,q) ) \le \rho^{c_3 R},
   \end{align*}
   where $\rho < 1$ is provided by the previous proposition. But then we have
   \begin{align*}
       \Prob(A_4^c) \le \sum_{p,q \in S(\Sigma R)} \Prob \left( \bigcap_{r=\sigma R}^{\Sigma R - 3K} C^r(p,q) \right)
       \le (C_1 (\Sigma R)^d)^2 \rho^{c_3 R} \tendsto{R}{\infty} 0,
   \end{align*}
   as desired.
\end{proof}

Now we prove the key property of $A_5$.
\begin{lemma} \label{lem:a5}
    On the event $A_5 \cap \{ x,y \notin B(\Sigma R), \pi \mbox{ visits } B_i \}$, 
    for some $\Sigma R \ge r \ge \sigma R$, there exist paths $\alpha$ and $\beta$ satisfying conditions 1 through 5 above.
\end{lemma}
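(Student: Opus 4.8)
The plan is to take the natural truncations of the $T$-geodesic $\pi$ and reduce everything to producing one good radius. For $r\in[\sigma R,\Sigma R]$ let $e_r$ be the first vertex of $\pi$ in $B(r)$ and $x_r$ the last; the vertex of $\pi$ just before $e_r$ (just after $x_r$) lies outside $B(r)$, so $e_r,x_r\in S(r)$. Put $\alpha:=\pi_{x,e_r}$, $\beta:=\pi_{x_r,y}$, $v':=e_r$, $w':=x_r$. Conditions (1) and (2) hold since $\pi$ avoids $B(r)\supseteq B(r-1)$ strictly before $e_r$ and strictly after $x_r$. For (4)--(5): $\alpha$ is a prefix of $\pi$, so its last entrance $\tilde v$ into $B(\Sigma R)$ is a vertex of $\pi$ with $\pi_{x,\tilde v}=\alpha_{x,\tilde v}$; and $e_r$ occurs on $\pi$ no later than the vertex $v_r$ of condition (5) (because $e_r\in S(r)\not\subseteq B(r-1)$ precedes the first vertex of $\pi$ in $B(r-1)$), so $\alpha_{\tilde v,v'}=\pi_{\tilde v,e_r}$ is a sub-path of $\pi_{\tilde v,v_r}$ and $|\alpha_{\tilde v,v'}|\le|\pi_{\tilde v,v_r}|$; the statements for $\beta$ are symmetric. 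Thus for \emph{every} $r\in[\sigma R,\Sigma R]$ the pair $(\alpha,\beta)$ satisfies (1), (2), (4), (5) (and the edge-geodesics $[v',o_i],[o_i,w']$ used afterwards lie in $B(r)$), so it remains only to find one $r$ with $d_{E(B(r))}(e_r,x_r)>K$, which then gives (3).

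Suppose no such $r$ exists, i.e.\ $d_{E(B(r))}(e_r,x_r)\le K$ for all $r\in[\sigma R,\Sigma R]$. Taking $r=\Sigma R$ shows $p:=e_{\Sigma R}$ and $q:=x_{\Sigma R}$ are points of $S(\Sigma R)$ with $d_{B(\Sigma R)}(p,q)\le K$, hence an admissible pair for $A_5$; so on $A_5$ there is $r^*\in[\sigma R,\Sigma R-3K]$ with $C^{r^*}(p,q)$ \emph{failing}: every path inside $S^{r^*}(p,q)$ from $S^{r^*}_{2K}$ to $S^{r^*}_0$ is strictly more $w$-expensive than every path inside $S^{r^*}(p,q)$ of length at most $K$ joining two points of $S^{r^*}_{2K}$. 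Since $\pi$ visits $B_i\subseteq B(R)\subseteq B(r^*-1)$, its central portion descends from $S(\Sigma R)$ through $S(r^*)$ and back, and we will contradict the $T$-minimality of $\pi$.

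The crucial input is a \emph{narrowness} claim: under the standing hypothesis (applied at the radii $r\in[r^*,\Sigma R]$), the part of $\pi$ lying in the annulus $B(\Sigma R)\setminus B(r^*-1)$ stays inside $S^{r^*}(p,q)$. One proves it by propagating inward: at level $\Sigma R$ the entrance $p$ lies on the geodesic $[p,o_i]$ and the exit $q$ on $[q,o_i]$; the bound $d_{E(B(r))}(e_r,x_r)\le K$ at each successive level, combined with triangle-inequality comparisons of distances to $o_i$ along geodesics as in the computations above (if $e_r$ were to drift far from $\{p^r,q^r\}$, the points where $[p,o_i],[q,o_i]$ meet $S(r)$, then $x_r$ would have to drift comparably, forcing $d_{E(B(r))}(e_r,x_r)>K$), keeps $e_r,x_r$ within a bounded distance of $\{p^r,q^r\}$; one also checks that the excursions of $\pi$ out of $B(\Sigma R)$ between its first entrance and last exit are irrelevant to the descent and can be set aside. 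Granting narrowness, $\pi$ crosses $S^{r^*}_{2K}$ once descending to $S^{r^*}_0$ (at a point $a$) and once ascending from it (at a point $b$), the portion $\mu$ of $\pi$ between $a$ and $b$ contains $\pi$'s excursion below $S^{r^*}_0$ and splits into two sub-paths from $S^{r^*}_{2K}$ to $S^{r^*}_0$ lying in $S^{r^*}(p,q)$, and $a,b$ are within a bounded distance of each other (by narrowness and $d_{E(B(r^*))}(e_{r^*},x_{r^*})\le K$, and by the choice of $K$ and of the shell radii there is a path $\gamma_1\subseteq S^{r^*}(p,q)$ of length $\le K$ joining them). By the failure of $C^{r^*}(p,q)$, one of the two descending sub-paths of $\mu$ already costs strictly more than $T(\gamma_1)$, so $T(\mu)>T(\gamma_1)$; replacing $\mu$ by a loop-erasure of $\gamma_1$ produces a path with the same endpoints as $\pi$ and strictly smaller passage time, contradicting the geodesicity of $\pi$. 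This contradiction supplies the required radius $r$, and the lemma follows.

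I expect the narrowness claim to be the main obstacle: turning ``$\pi$'s entrance meets its exit at every radius'' into ``$\pi$ is trapped in a bounded-width tube around the two geodesic segments to $o_i$'', with the bookkeeping for the excursions of $\pi$ that leave $B(\Sigma R)$ (irrelevant to the descent, but awkward in the level-by-level propagation). The reduction to (3), the choice of $(p,q)$, and the final rerouting are comparatively routine.
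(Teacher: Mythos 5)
Your reduction to condition (3) via truncations is fine (with $\alpha=\pi_{x,e_r}$, $\beta=\pi_{x_r,y}$ conditions (1), (2), (4), (5) do hold for every $r$), and your overall plan ("find one good radius, otherwise contradict $A_5$ via $T$-minimality") matches the paper's endgame. But the dichotomy you rely on — either (3) holds for some $r$, or $\pi$ is trapped in the tube $S^{r}(p,q)$ around the geodesics $[p,o_i],[q,o_i]$ — is not established, and the heuristic you give for it is incorrect: closeness of the entry and exit points at every radius does \emph{not} force them to stay near $\{p^r,q^r\}$, because $e_r$ and $x_r$ can drift \emph{together} (think of a path spiralling inward), keeping $d_{E(B(r))}(e_r,x_r)\le K$ at every level while both points move far from the traces of the fixed geodesics. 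In that scenario the failure of $C^{r^*}(p,q)$, which only constrains weights inside $S^{r^*}(p,q)$, says nothing about the cheap subpaths of $\pi$, so no contradiction with geodesicity or with $A_5$ is available; since you yourself flag the narrowness claim as the unproven "main obstacle," this is a genuine gap rather than a deferred routine step.

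The paper closes exactly this gap with an intermediate case that your proposal is missing, and it requires $\alpha,\beta$ that are \emph{not} truncations of $\pi$. Working with $v_r,w_r$ (the vertices just before first entry into, and just after last exit from, $B(r-1)$), it introduces the "shadow" set $\tilde S^{r}$ of all points of $\alpha_r\cup\beta_r$ projected to $S(r)$ along edge-geodesics to $o_i$. If at some radius this shadow contains a point $z$ with $d_{E(B(r))}(z,v_r),d_{E(B(r))}(z,w_r)>K$, one builds $\alpha$ (or $\beta$) by following $\pi$ up to the last intersection $z'$ of that geodesic with $\alpha_r\cup\beta_r$ and then riding the geodesic down to $z\in S(r)$; condition (3) holds by the choice of $z$, and condition (5) holds because the geodesic segment has length $d(z',o_i)-r$, a lower bound for the length of the corresponding segment of $\pi$ by the triangle inequality. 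Only when \emph{both} this and your condition (3) fail at every radius does one get the containments $\tilde S^{r}\subset S^{r}_0(v,w)$ and $v_{r+\ell},w_{r+\ell}\in S^{r}_{\ell}(v,w)$, which is precisely the narrowness you need; then the subpath $\pi_{v_{r+2K},v_r}$ supplies $\gamma_2$, an edge-geodesic from $v_{r+2K}$ to $w_{r+2K}$ supplies $\gamma_1$, and $T$-geodesicity gives $T(\gamma_1)\ge T(\pi_{v_{r+2K},w_{r+2K}})\ge T(\gamma_2)$, so $C^{r}(v,w)$ holds for all admissible $r$, contradicting $A_5$. To repair your argument you would need to add this middle construction (or prove a genuinely different substitute for the narrowness claim), and also handle the excursions of $\pi$ outside $B(\Sigma R)$, which the paper's choice of $\tilde S^r$ accommodates automatically.
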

\begin{proof}
   Denote by $v$ and $w$ respectively the first and last vertices of $\pi$ which lie in $B(\Sigma R)$.
   Now, for each $\Sigma R \ge r \ge \sigma R$, define
   $v_r \in S(r)$ to be the vertex of $\pi$ immediately preceding the first vertex of $\pi$ lying in $B(r-1)$,
   and define $w_r \in S(r)$ to be the vertex of $\pi$ immediately following the last vertex of $\pi$ lying in $B(r-1)$.
   All of these are well defined, since $\pi$ starts and ends outside of $B(\Sigma R)$ and visits $B_i \subset B(R) \subset B(\sigma R)$.
   Then define $\alpha_r := \pi_{x,v_r}$, $\beta_r := \pi_{w_r,y}$.
   If for some $r$, $d_{E(B(r))}(v_r,w_r) > K$, then we can just take $\alpha = \alpha_r$ and $\beta = \beta_r$ and we are done.
   So from here on assume that $d_{E(B(r))}(v_r,w_r) \le K$ for all $\Sigma R \ge r \ge \sigma R$.
   
   Next, for each $r$ we define the set
   \[
      \tilde{S}^r := 
      \left( \bigcup_{p \in V(\alpha_r \cup \beta_r) \cap B(\Sigma R)} 
              \bigcup_{\substack{\gamma: p \to o_i \\ \mbox{edge geodesic}}} V(\gamma) \right) \cap S(r).
   \]
   Suppose that for some $\Sigma R \ge r \ge \sigma R$, there is some $z \in \tilde{S}^r$ 
   with $d_{E(B(r))}(z,v_r), d_{E(B(r))}(z,w_r) > K$. Then we can construct $\alpha$ and $\beta$ as follows.
   $z$ by definition lies on some edge-geodesic $\gamma$ from some point $p \in V(\alpha_r \cup \beta_r)$ to $o_i$.
   Consider the last vertex of $V(\gamma) \cap (V(\alpha_r \cup \beta_r))$ (that is, the nearest vertex to $o_i$), and call it $z'$.
   If $z' \in V(\alpha_r)$, set $\alpha := (\alpha_r)_{x,z'} * \gamma_{z',z}$ and $\beta := \beta_r$.
   If $z' \in V(\beta_r)$, set $\beta := \overline{\gamma}_{z,z'} * (\beta_r)_{z',y}$ and $\alpha := \alpha_r$ (here an overline denotes the reverse of a path).
   In either case, $\alpha$ and $\beta$ give disjoint self-avoiding paths because the original paths were disjoint and self-avoiding
   and because by construction $V(\gamma_{z',z})$ only intersects $V(\alpha_r \cup \beta_r)$ at $z'$.
   Conditions (1)-(3) are satisfied by choice of $z$, (4) is satisfied because $\alpha$ and $\overline{\beta}$ agree with $\alpha_r$ and 
   $\overline{\beta_r}$ until one of them reaches $z'$, 
   and from that point the path follows $\gamma$; in particular, it stays inside $B(\Sigma R)$ 
   until it reaches its endpoint. For (5), note that, since $\gamma$ is an edge-geodesic from $z'$ to $o_i$,
   \[
      |\gamma_{z',z}| = d(z',o_i) - d(z,o_i) = d(z',o_i) - r.
   \]
   Since $(\alpha_r)_{z', v_r}$ (or $(\overline{\beta}_r)_{z',w_r}$, if $z' \in V(\beta_r)$) is a path from $z'$ to $S(r)$,
   it must have length at least $d(z',o_i) - r$ by the triangle inequality, and so we get (5).
   
   Lastly, we show that, if both of the above conditions fail, i.e. for all $\Sigma R \ge r \ge \sigma R$ we have
   \begin{equation} \label{eq:closeentryexit}
      d_{E(B(r))}(v_r,w_r) \le K
   \end{equation}
    and
   \begin{equation} \label{eq:tightspace}
      \tilde{S}^r \subset \left(B_{E(B(r))}(v_r, K) \cup B_{E(B(r))}(w_r,K) \right) \cap S(r),
   \end{equation}
   then the event $A_5$ fails.
   
   For this, first note that, since every $V(\alpha_r \cup \beta_r) \cap B(\Sigma R)$ contains the entry and exit points $v$ and $w$,
   every $\tilde{S}^r$ contains $v^r, w^r$ (in the notation used in defining the set $S^r(p,q)$ in the case $(p,q) = (v,w)$).
   Then \eqref{eq:tightspace} implies that $v^r$ and $w^r$ are each distance at most $K$ from either $v_r$ or $w_r$.
   A general element $z \in \tilde{S}^r$ has the same property, and combining with \eqref{eq:closeentryexit} gives
   \[
      d(z,v^r) \le \min(d(z,v_r),d(z,w_r)) + d(v_r,w_r) + \min(d(v_r,v^r),d(w_r,v^r)) \le 3K,
   \]
   and similarly $d(z,w^r) \le 3K$. Hence $\tilde{S}^r \subset S^r_0(v,w)$. Moreover we have
   \begin{claim}
      If $\Sigma R \ge r+\ell, r \ge \sigma R$, then $v_{r+\ell},w_{r+\ell} \in S^r_{\ell}(v,w)$.
   \end{claim}
   \begin{proof}[Proof of Claim]
      Since $v_{r+\ell},w_{r+\ell} \in S(r+\ell)$ and $S^r_0(v,w) \subset S(r)$, we have that
      \[
         d_{E(B(\Sigma R) \setminus B(r-1))}(v_{r+\ell}, S^r_0(v,w)), d_{E(B(\Sigma R) \setminus B(r-1))}(w_r, S^r_0(v,w)) \ge \ell,
      \]
      so we only have to show the opposite inequality. For this, let $(v_{r+\ell})^r$ be as usual the intersection of $S(r)$
      with an edge geodesic from $v_{r + \ell}$ to $o_i$. Since $v_{r+\ell} \in V(\alpha_{r+\ell}) \subset V(\alpha_r)$,
      we have that $(v_{r + \ell})^r \in \tilde{S}^r \subset S^r_0(v,w)$; moreover, the geodesic from $v_{r+\ell}$ to
      $(v_{r + \ell})^r$ is a path of length $\ell$ which lies in $B(\Sigma R) \setminus B(r-1)$, and so we have
      \[
         d_{B(\Sigma R) \setminus B(r-1)}(v_{r+\ell}, S^r_0(v,w)) \le d_{B(\Sigma R) \setminus B(r-1)}(v_{r+\ell}, (v_{r+\ell})^r) = l,
      \]
      as desired. The argument for $w_{r+\ell}$ is the same.
   \end{proof}
   Finally, we contradict $A_5$. For each $\Sigma R - 3K \ge r \ge \sigma R$, consider $\gamma_3 := \pi_{v_{r+2K},v_r}$.
   Since $\gamma_3$ by construction does not visit $B(r-1)$, and since it starts at a point with distance
   $d_{E(B(\Sigma R) \setminus B(r-1))}(v_{r+2K}, S^r_0(v,w)) = 2K$ and ends at a point $v_r \in S^r_0(v,w)$,
   some subpath $\gamma_2$ of $\gamma_3$ is contained in $S^r(v,w)$, starts at $S^r_{2K}(v,w)$ and ends
   at $S^r_0(v,w)$.
   On the other hand, let $\gamma_1$ be an edge-geodesic from $v_{r+2K}$ to $w_{r+2K}$. By assumption,
   $d(v_{r+2K},w_{r+2K}) \le K$, so $|\gamma_1| \le K$; therefore $\gamma_1$ does not intersect $B(r-1)$,
   and since the endpoints of $\gamma_1$ lie in $S^r_{2K}(v,w)$, $\gamma_1$ is totally contained in $S^r(v,w)$.
   But since $\pi$ is a $T$-geodesic, we have
   \[
      T(\gamma_1) \ge T(\pi_{v_{r+2K},w_{r+2K}}) \ge T(\gamma_3) \ge T(\gamma_2),
   \]
   and so $C^r(v,w)$ holds. But then $C^r(v,w)$ holds for all $\Sigma R - 3K \ge r \ge \sigma R$, so $A_5$ fails.
\end{proof}

To apply Proposition \ref{prop:conditionaltovdBK}
it only remains to obtain a lower bound on $\Prob( w^* \in E_w | w)$ on the event $A_i^R$ which is
independent of $o_i$.
But on $A_i^R$ we have
\begin{align*}
   \Prob(w^* \in E_w | w) 
   &\ge \nu(I_0)^{|S_I|} \nu([\inf, \inf + \delta_{\inf}])^{|S_{\inf}|} \eta^{|S_{sim}|} \nu([M,\infty))^{|S_M|} \\
   &\ge \min( \nu(I_0), \nu([\inf,\inf+\delta_{\inf}]), \eta, \nu([M,\infty)))^{D C_1 (\Sigma R)^d} > 0,
\end{align*}
as desired.

\subsection{Proof of Theorem \ref{thm:polygrowthvdBK}}

Let $G$ be a graph of strict polynomial growth which admits detours. 
Let $\nu$ be an exponential-subcritical measure with finite mean, and let $\tilde{\nu}$ be a measure which has finite mean
and is strictly more variable than $\nu$.
First assume \eqref{eq:extraassumption}. Then
let $\epsilon > 0, I_0, y_0$ be as in Lemma \ref{lem:technicallemma}.
In case $\nu$ has bounded support, construct $B_i, B(o_i, \Sigma R), A_i^R,$ and $E_w$ as in Section \ref{bddsupp}.
In case $\nu$ has unbounded support, construct $B_i, B(o_i, \Sigma R), A_i^R,$ and $E_w$ as in Section \ref{unbddsupp}.
In their respective sections, we prove that both constructions satisfy the hypotheses of Proposition \ref{prop:conditionaltovdBK},
and so 
\[
   \liminf_{d(x,y) \to \infty} \frac{ \E T(x,y) - \E \tilde{T}(x,y) }{d(x,y)} > 0.
\]
Now, if $w, \tilde{w}$ do not satisfy \eqref{eq:extraassumption}, 
take $\bar{w}$ as in Lemma \ref{lem:wlog}. Then we have
\[
   \liminf_{d(x,y) \to \infty} \frac{ \E T(x,y) - \E \tilde{T}(x,y) }{d(x,y)} \ge 
   \liminf_{d(x,y) \to \infty} \frac{ \E T(x,y) - \E \bar{T}(x,y) }{d(x,y)}  
   > 0.
\]
Thus $G$ is vdBK. The reverse implication is given by Theorem \ref{thm:notvdBK}.

\subsection{Non-homogeneous graphs of polynomial growth}
Theorem \ref{thm:polygrowthvdBK} does not require almost-transitivity (although almost-transitivity does give us more information
about the condition of exponential-subcriticality, namely that $\underline{p_c} = p_c$ \cite{DuminilCopinTassion}).
This means that the theorem applies to a very broad class of graphs, but it can be difficult to produce examples of non-transitive
graphs which have \emph{strict} polynomial growth and for which it is easy to check whether the graph admits detours.
Here we give two examples (or one example and one counterexample).

First, the theorem can be applied to a broad range of subgraphs of the standard Cayley graph of $\Z^d$.
For instance $G := \Z_{\ge 0}^{d_1} \times \Z^{d_2} \subset \Z^{d_1 + d_2}$ will be vdBK whenever $d_1 + d_2 \ge 2$.
These graphs have growth bounds $B_G(R) \le B_{\Z^{d_1+d_2}}(R) \le 2^{-d_1} B_G(R)$, from which we can deduce
that $G$ has strict polynomial growth. Moreover, the unique geodesics in $G$ are all also unique geodesics in $\Z^d$,
(that is, they are represented by words of the form $e_i^k$, where $\{e_i\}$ is the standard generating set),
and when $d_1 + d_2 \ge 2$ one can easily see that these admit detours.

Moreover, we can apply the theorem to ``sectors'', that is, graphs $G_{\theta,\theta'}$ induced by the vertex subset
\[
   V_{\theta,\theta'} := \{ (x,y) \in \Z^2 : \theta \le \arctan(y/x) \le \theta' \}
\]
for fixed $\theta < \theta'$. Again we see that this is of strict polynomial growth.
Moreover, the unique geodesics in this graph are either already unique geodesics in $\Z^2$, or
they run along the ``boundary'' $\{ (x,y) : \arctan(y/x) \approx \theta \mbox{ or } \theta' \}$ (in fact most geodesics along
the boundary are also not unique). But again it is simple to check that these admit detours, and hence $G$ is vdBK.
Similar constructions can be done in higher dimensions, and in fact many more subgraphs of $\Z^d$ satisfy the
hypotheses of the theorem.

The next obvious candidate for a non-almost-transitive graph of strict polynomial growth is the infinite cluster
of a supercritical Bernoulli percolation on a Cayley graph of a virtually nilpotent group.
In fact, this will not have strict polynomial growth as we have defined here, since we require \emph{uniform}
volume lower bounds. But beyond that, one can see that (for $p < 1$) almost surely the cluster does \emph{not}
admit detours, and hence by Theorem \ref{thm:notvdBK} it is \emph{not} vdBK.

This can be seen by a simple ``finite energy'' type argument. For any $C < \infty$, choose a large radius $R \ge C$ such that
the probability that $B(R)$ intersects the infinite cluster is positive; this event is actually independent of the configuration of edges
inside $E(B(R))$, so chose a particular configuration in $E(B(R))$ such that all edges in contact with the vertex boundary of $B(R)$ are open,
such that all these edges are connected to each other by open edges, and such that these open edges on the boundary are connected
to an open path of length $\ge 3C$ which is otherwise surrounded by closed edges.
The probability that the boundary of $B(R)$ is connected to infinity \emph{and} that the restriction of the sampled configuration
restricted to $E(B(R))$ is our prescribed configuration, is also positive. One quickly sees that on this event, the infinite cluster
contains a self-avoiding path of length $C$ which does not admit a detour of length at most, say, $(3/2)C$.
The event that the infinite cluster contains a such a path is clearly a translation-invariant event, and so by ergodicity,
since this event occurs with positive probability, it occurs with probability $1$.
Intersecting all these events for a countable collection $C \to \infty$ shows that almost surely the infinite cluster does not admit detours.

Of course, for graphs which are not almost-transitive, the vdBK condition is quite strong. One may ask the following question
(which is equivalent in the case of almost-transitive graphs):
fix $o \in V$. If $\tilde{\nu}$ is strictly more variable than $\nu$ and $\nu$ is exponential-subcritical,
is
\[
   \liminf_{x \to \infty} \frac{ \E T(o,x) - \E \tilde{T}(o,x)}{d(o,x)} > 0?
\]
It is conceivable that the answer might be ``yes'' in the case of supercritical percolation clusters on nilpotent Cayley graphs,
since supercritical clusters ``generally behave like their underlying graph'' at large scales.
Perhaps the proofs in this paper could be adapted to this case, but it would require ``large scale'' and perhaps ``statistical'' weakenings
of the geometric properties used.

\section{Absolute continuity with respect to the expected empirical measure} \label{sec:abscont}
For a graph $G$ and a probability measure $\nu$ on $[0, \infty)$, we say that the associated first passage percolation 
\emph{has weight distribution absolutely continuous with respect to the expected empirical measure} if for any Borel set $A \subset [0, \infty)$ with $\nu(A) > 0$ we have
\[
   \liminf_{d(x,y) \to \infty} \frac{ \E[ \sum_{e \in \pi} \ind_{w(e) \in A} ] }{ d(x, y) } > 0,
\]
where $\pi$ denotes the $T$-geodesic from $x$ to $y$.
Note that this does not imply or presuppose that a literal expected empirical measure, that is, a weak limit of the expected empirical measures 
$\frac{1}{d(x,y)} \E \sum_{e \in \pi} \delta_{w(e)}$, exists,\footnote{In the $G=\Z^d$ case it was recently proven by Bates ~\cite{Bates} that for ``generic''
$\nu$, the sequence of random empirical measures $\frac{1}{d(0,nv)} \sum_{e \in \pi} \delta_{w(e)}$ in a fixed direction almost surely
weakly converges to a deterministic limit measure, an even stronger result than the existence of an \emph{expected} empirical measure in a particular direction.}
although it does imply that $\nu$ is absolutely continuous
with respect to any subsequential weak limit of this collection of measures. 
As noted in \cite{vdBK}, the above property implies strict monotonicity with respect to stochastic domination:

\begin{prop} \label{prop:stochdommonotonicity}
   Suppose that $\nu$ is absolutely continuous with respect to the expected empirical measure. Then whenever $\nu$ strictly
   stochastically dominates $\tilde{\nu}$, that is, whenever $\tilde{\nu} \ne \nu$ and there exists some coupling $(\tilde{w},w)$
   of $\tilde{\nu}$ and $\nu$ such that $\tilde{w} \le w$ almost surely, we have $\E \tilde{T} \ll \E T$.
\end{prop}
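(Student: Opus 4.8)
The plan is to adapt the argument given for the analogous statement in \cite{vdBK}, using crucially that the $T$-geodesic depends only on $w$ while the coupling between $w$ and $\tilde{w}$ is carried out independently edge by edge. Throughout, extend the given coupling $(\tilde{w}(e),w(e))$ to an i.i.d.\ family over the edges, so that $(w(e))_{e\in E}$ and $(\tilde{w}(e))_{e\in E}$ realize the first passage percolations of $\nu$ and $\tilde{\nu}$, the geodesic $\pi$ from $x$ to $y$ is measurable with respect to $(w(e))_e$ alone, and, conditionally on $(w(e))_e$, the variables $\tilde{w}(e)$ remain independent with $\tilde{w}(e)$ depending only on $w(e)$. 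Since $\nu$ and $\tilde{\nu}$ have finite mean, $\E T(x,y)$ and $\E\tilde{T}(x,y)$ are finite (bounded by $(\E w)\,d(x,y)$ and $(\E\tilde{w})\,d(x,y)$).

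First I would extract a uniform \emph{local gap} from strict domination. Since $\tilde{w}\le w$ almost surely and $\tilde{\nu}\ne\nu$, we have $\Prob(\tilde{w}(e)<w(e))>0$, hence $\Prob(w(e)-\tilde{w}(e)>a)>0$ for some $a>0$. Writing this probability as $\int \Prob(w(e)-\tilde{w}(e)>a \mid w(e)=y)\,d\nu(y)$, the Borel set
\[
   A := \{\, y : \Prob(w(e)-\tilde{w}(e)>a \mid w(e)=y) \ge b \,\}
\]
satisfies $\nu(A)>0$ once $b>0$ is taken small enough (otherwise the conditional probability would vanish for $\nu$-a.e.\ $y$, contradicting positivity of the integral). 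Then for $y\in A$ we get $\E[w(e)-\tilde{w}(e)\mid w(e)=y]\ge ab$, since $w(e)-\tilde{w}(e)\ge 0$ always and exceeds $a$ with conditional probability at least $b$. Now apply the hypothesis of absolute continuity with respect to the expected empirical measure to this $A$: setting $c := \liminf_{d(x,y)\to\infty} \tfrac{1}{d(x,y)}\E\bigl[\sum_{e\in\pi}\ind_{w(e)\in A}\bigr] > 0$, we have $\E\bigl[\sum_{e\in\pi}\ind_{w(e)\in A}\bigr]\gessim d(x,y)$.

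Next I would compare passage times along $\pi$. Since $T(\pi)=T(x,y)$ while $\tilde{T}(x,y)\le\tilde{T}(\pi)$, and since $w\ge\tilde{w}$ everywhere,
\begin{align*}
   \E T(x,y) - \E\tilde{T}(x,y) &\ge \E\bigl[T(\pi)-\tilde{T}(\pi)\bigr] = \E\Bigl[\sum_{e\in\pi}\bigl(w(e)-\tilde{w}(e)\bigr)\Bigr] \\
   &\ge \E\Bigl[\sum_{e\in\pi}\ind_{w(e)\in A}\bigl(w(e)-\tilde{w}(e)\bigr)\Bigr],
\end{align*}
the last step because each summand is nonnegative. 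Conditioning on $(w(e))_e$, which fixes $\pi$ and leaves the edgewise coupling of $\tilde{w}$ intact,
\[
   \E\Bigl[\,\sum_{e\in\pi}\ind_{w(e)\in A}\bigl(w(e)-\tilde{w}(e)\bigr) \Bigm| w\,\Bigr] = \sum_{e\in\pi}\ind_{w(e)\in A}\,\E\bigl[\,w(e)-\tilde{w}(e)\bigm| w(e)\,\bigr] \ge ab\sum_{e\in\pi}\ind_{w(e)\in A}.
\]
Taking expectations and combining with the previous paragraph yields $\E T(x,y)-\E\tilde{T}(x,y)\gessim ab\,c\,d(x,y)$, hence $\liminf_{d(x,y)\to\infty}\tfrac{\E T(x,y)-\E\tilde{T}(x,y)}{d(x,y)}\ge abc>0$, i.e.\ $\E\tilde{T}\ll\E T$.

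The argument is essentially routine; the two points needing care are the measure-theoretic extraction of a set $A$ carrying a uniform conditional gap $(a,b)$ (a simpler analogue of Lemma \ref{lem:technicallemma}) and, more importantly, the observation that $\pi$ is measurable with respect to $w$ alone, which is exactly what permits pulling $\ind_{w(e)\in A}$ outside the conditional expectation over $\tilde{w}$. If one prefers not to assume existence of $T$-geodesics, one replaces $\pi$ by a path $\pi_\delta\colon x\to y$ with $T(\pi_\delta)\le T(x,y)+\delta$ and lets $\delta\to 0$, exactly as in the footnote to the definition of a feasible pair.
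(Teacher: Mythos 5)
Your proof is correct and follows essentially the same route as the paper: extract a Borel set $A$ with $\nu(A)>0$ carrying a uniform conditional gap $(a,b)$ from strict domination, bound $\E T(x,y)-\E\tilde T(x,y)$ below by $\E\sum_{e\in\pi}(w(e)-\tilde w(e))$, and condition on $w$ to get $ab$ per edge of $\pi$ with weight in $A$, then invoke absolute continuity with respect to the expected empirical measure. Your version merely makes explicit two steps the paper leaves implicit (the measure-theoretic construction of $A$ and the fact that conditioning on the full field $w$ fixes $\pi$ while the edgewise coupling reduces the conditional expectation to one given $w(e)$), which is fine.
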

\begin{proof}
   Fix a coupling $(\tilde{w}, w)$ with $\tilde{w} \le w$; since $\tilde{\nu} \ne \nu$,
   $\Prob(\tilde{w} < w) > 0$, and so one can find sufficiently small $a > 0, b > 0,$ and Borel set $A \subset [0, \infty)$ such that $\nu(A) > 0$
   and such that for every $y \in A$,
   \[ \Prob( \tilde{w} < y - a | w = y) \ge b. \]
   We thus have
   \begin{align*}
      \E T(x,y) - \E \tilde{T}(x,y) &\ge \E[ T(\pi) - \tilde{T}(\pi)] \\
                                               &= \E \sum_{e \in \pi} (w(e) - \tilde{w}(e)) \\
                                               &= \E\left[ \sum_{e \in \pi} \E[ w(e) - \tilde{w}(e) | w(e) ] \right] \\
                                               &\ge \E \left[ \sum_{e \in \pi} ab \ind_{w(e) \in A} \right] \\
                                               &\gessim d(x,y),
   \end{align*}
   where the last inequality follows from the fact that $\nu$ is absolutely continuous with respect to the expected empirical measure.
\end{proof}
If $\nu$ strictly stochastically dominates $\tilde{\nu}$, then $\tilde{\nu}$ is strictly more variable than $\nu$,
so our theorems above already prove strict monotonicity with respect to stochastic domination for 
graphs which admit detours and are either quasi-trees or have strict polynomial growth (in the later case, on the
condition that $\nu$ is also exponential-subcritical).
However, we can prove absolute continuity with respect to the empirical measure---and hence strict monotonicity with respect to
stochastic domination---whether or not $G$ admits detours directly, by using essentially
identical methods to those above. 

\qitreeabscont*
\begin{proof}[Proof sketch]
   Let $A \subset [0, \infty)$ be Borel with $\nu(A) > 0$. Set $I_0 = A$. Set $C=1, \epsilon = 1$. Then do the same construction as in the proof of
   Theorem \ref{thm:qitree}. Whereas Theorem \ref{thm:qitree} gives $\gessim d(x,y)$ detours in expectation, now this construction
   gives $\gessim d(x,y)$ edges of $\pi$ which have weights in $I_0 = A$, as desired.
   More explicitly, the construction gives a family of subgraphs 
   $\{ B_i \}$ with 
   \[
      \sum_{i} \Prob(\mbox{the geodesic } \pi:x \to y \mbox{ contains an edge } e \mbox{ in } B_i \mbox{ of weight } w(e) \in A) \gessim d(x,y).
   \]
   Then arguing similarly as in the proof of Lemma \ref{lem:feasibletovdBK}, one gets a \emph{disjoint} family $\{ B_i \}$ with this property,
   and so one concludes that $\pi$ contains $\gessim d(x,y)$ edges with weight lying in $A$ in expectation, as desired.
   Lastly, the stochastic domination statement follows from Proposition \ref{prop:stochdommonotonicity}.
\end{proof}

\polygrowthabscont*
\begin{proof}[Proof sketch]
   Let $A \subset [0, \infty)$ be Borel with $\nu(A) > 0$ (one may without loss of generality replace $A$ with a bounded positive $\nu$-measure subset). 
   Assume $\nu$ is exponential-subcritical,
   and choose $q, \Sigma,$ and $\delta$ as in the first and simplest construction in the proof of
   Theorem \ref{thm:polygrowthvdBK}, that is, in the case that $\nu$ has bounded support and $y_0 = \sup$.
   Also define the event $A_1$ as in that construction.
   Then define $E_w = E$ by
   \[
      E := \left\{ \omega \in [0,\infty)^{E(B(\Sigma R))} : \omega(e) \in [\inf, \inf + \delta) \mbox{ if } e \in E(B(\Sigma R - 1)), 
                                                                                     \omega(e) \in A \mbox{ otherwise} \right\}.
   \]

   Using this construction, we have that for sufficiently large $R$, whenever $A_1$ holds, 
   the $T$-geodesic $\pi$ crosses $B_i$, and $w^* \in E_w = E$,
   there is a path entering $B(o_i, \Sigma R)$ which has $T^*$-weight strictly smaller than any path not entering $B(o_i, \Sigma R)$;
   hence the $T^*$ geodesic enters $B(o_i, \Sigma R)$ and therefore contains
   an edge with weight valued in $A$. 
   (Note that this is much simpler than in the proof of Theorem \ref{thm:polygrowthvdBK} when $y_0 \ne \sup$; this is because
   in that proof, we had to ensure that the $T^*$-geodesic made a long excursion away from the boundary of $B(o_i, \Sigma R)$,
   whereas here we only need it to hit a single edge with weight in $A$).
   
   The Peierls lemma (Lemma \ref{lem:peierls}) gives, in expectation, $\gessim d(x,y)$ $B_i$
   such that the $T$-geodesic visits $B_i$ and $A_1$ holds; combining this with resampling 
   (similar to Proposition \ref{prop:conditionaltovdBK}) thus gives in expectation we at least $\gessim d(x,y)$ $B(o_i, \Sigma R)$  which contain an
   edge of the $T$-geodesic with weight lying in $A$. Again arguing similarly as in the proof of Lemma \ref{lem:feasibletovdBK} then
   gives that, in expectation, the $T$-geodesic contains $\gessim d(x,y)$ edges with weight lying in $A$, as desired.
   The stochastic domination statement again follows from Proposition \ref{prop:stochdommonotonicity}.
\end{proof}

\subsection*{Acknowledgements}
The author thanks Antonio Auffinger for suggesting the problem of generalizing \cite{vdBK}, as well as for helpful conversations.
The author also thanks Alice Kerr, from whom he learned Manning's bottleneck criterion.

\appendix
\appendixpage

\section{Cayley graphs which admit detours} \label{app:grouptheory}
\subsection{Paths in Cayley graphs}
Let $\Gamma$ be a finitely generated group, $S$ a finite set, and $f:S \to \Gamma \setminus \{1\}$ a map whose image generates $\Gamma$.
We define the \emph{unreduced Cayley graph} associated to $(\Gamma, S)$ to be the graph $G = (V,E)$ with vertex set $V = \Gamma$
and edge set $E = \Gamma \times S$, where the boundary of the edge $e = (g,s)$ is $\{g,gf(s)\} \subset V$.
We define the \emph{reduced Cayley graph} associated to $(\Gamma, S)$ to be the graph $G = (V,E)$ whose vertex set
is $V = \Gamma$, whose edge set is the set 
$E = \{ \{v,w\} \subset \Gamma : v \ne w, \exists s \in S \cup S^{-1} \mbox{ s.t. } w = vf(s)\}$
and the boundary map is the natural inclusion. The reduced Cayley graph is the simple graph obtained from the unreduced Cayley graph
by deleting parallel edges. Typically the two graphs are isomorphic unless some $f(s) \in \Gamma$ is of order two, but 
the two graphs may also be nonisomorphic if one takes $f:S \to \Gamma$ to be a ``redundant'' generating set, containing repeated elements
or inverses of elements already included.
We call a graph a \emph{Cayley graph} it is a reduced or unreduced Cayley graph.

Note that the action of $\Gamma$ on itself by left multiplication induces an action of $\Gamma$ on $G$ by graph isomorphisms
whenever $G$ is a Cayley graph. So $\Gamma$ acts on the set of vertices of $G$, the set of edges of $G$,
the set of geodesics of $G$, the set of unique geodesics of $G$, etc.
The action of $\Gamma$ on the vertex set $V = \Gamma$ is transitive, and so any path and in particular any unique geodesic
is the translate of some path (unique geodesic) starting at $1 \in \Gamma = V$.

Finite paths in the unreduced Cayley graph associated to $(\Gamma, S)$ starting from the identity vertex are in one-to-one correspondence
with finite words in $S \sqcup S^{-1}$. (Here, $S$ is considered as an abstract set, and $S^{-1}$ consists of symbols of the form $s^{-1}$
where $s \in S$).  
In the case of a \emph{reduced} Cayley graph, there is a bijection between finite paths starting at the identity and elements of $(f(S) \cup f(S)^{-1})^*$
(i.e. finite words in the subset $f(S) \cup f(S)^{-1} \subset \Gamma$).
We use $A$ to denote $S \sqcup S^{-1}$ in the case that $G$ is the unreduced Cayley graph associated to $(\Gamma, S)$ and 
we use $A$ to denote $f(S) \cup f(S)^{-1} \subset \Gamma$ in the case that $G$ is the reduced Cayley graph associated to $(\Gamma, S)$.
So in either case, paths starting from $1 \in V = \Gamma$ correspond to finite words in $A$; we denote the set of
finite words in $A$ by $A^*$. 

$A^*$ together with the operation of concatenation is the free monoid on $A$.
Since $A$ has a natural ``formal inverse'' map $A \to A, a \mapsto a^{-1}$ in both cases, this induces a ``formal inverse'' map on
$A^*$ given by $(a_1 \cdots a_n)^{-1} := a_n^{-1} \cdots a_1^{-1}$.
Note that $\alpha^{-1}$ is not a true inverse of $\alpha$; the word $\alpha \alpha^{-1} \in A^*$
represents the concatenation of the path represented by $\alpha$ with its reverse, which is in particular not equal to the trivial path
(consisting of no edges) represented by the empty word in $A^*$.

We have an ``evaluation map'' $\rho: (S \sqcup S^{-1})^* \to \Gamma$
induced by $f:S \to \Gamma$ given by
\[
   \rho(s_1^{\epsilon_1} \cdots s_k^{\epsilon_k}) := f(s_1)^{\epsilon_1} \cdots f(s_k)^{\epsilon_k},
\]
where the $s_i \in S$, $\epsilon_i \in \{+1,-1\}$. 
We also have an evaluation map $\rho: (f(S) \cup f(S)^{-1})^* \to \Gamma$ just given by group multiplication in $\Gamma$;
$\rho(s_1 \cdots s_k)$ is the product $s_1 * \cdots * s_k \in \Gamma$. (Typically the group multiplication in $\Gamma$ is denoted in the same 
way as concatenation in $A^*$; we only write it with $*$ here to emphasize that $\rho : A^* \to \Gamma$ is not the identity map).

Note that in either case $\rho: A^* \to \Gamma$ is a homomorphism of monoids which respects the inverse operation,
i.e. $\rho(\alpha \beta) = \rho(\alpha) \rho (\beta)$, 
$\rho(\alpha^{-1}) = \rho(\alpha)^{-1}$ for all $\alpha, \beta \in A^*$. 
Geometrically, if $\alpha \in A^*$ represents a path
in $G$ starting from $1 \in V = \Gamma$, $\rho(\alpha) \in \Gamma = V$ is the endpoint of that path.
If $\alpha, \beta \in A^*$, then the path represented by $\alpha \beta$ is the concatenation of the path represented by $\alpha$
with the left-translate by $\rho(\alpha)$ of the path represented by $\beta$.
The path represented by $\alpha^{-1}$ is the left-translate by $\rho(\alpha)^{-1}$ of the reverse of the path represented by $\alpha$.

The condition that $\pi \in A^*$ corresponds to a geodesic in $G$ is exactly the condition that
for any $\pi' \in A^*$ such that $\rho(\pi')=\rho(\pi)$ we have $|\pi'| \ge |\pi|$ (where here $|\cdot|$ is the length of a word). 
The condition that $\pi \in A^*$ corresponds to a unique geodesic is precisely the condition that $\pi$ corresponds to a geodesic,
and for any $\pi'$ such that $\rho(\pi') = \rho(\pi)$ and $|\pi'|=|\pi|$ we have $\pi = \pi'$.
Recall also that the properties of being geodesic and uniquely geodesic pass to subpaths and are invariant under translations, 
and so the above properties pass to subwords.
In what follows we will use the same symbol to denote a word in $A^*$ and the path in $G$ starting from the identity which it corresponds to.
For instance, for $\alpha, \beta \in A^*$, $\rho(\alpha) \beta$ is the left-translate by $\rho(\alpha) \in \Gamma$ of the path in $G$
represented by $\beta$; $\rho(\alpha) \beta$ is a path in $G$ starting at $\rho(\alpha)$ and ending at $\rho(\alpha \beta)$, and 
the path $\alpha \beta$
is the concatenation of the paths $\alpha$ and $\rho(\alpha) \beta$.

\subsection{Sufficient conditions for a Cayley graph to admit detours}
In many groups, there are relatively explicit constructions for constructing an $\epsilon$-detour for a given unique geodesic.
All the constructions given in this section are very similar, and come from intuition from $\Z^d$, $d \ge 2$.
In the standard Cayley graph of $\Z^d$, unique geodesics are just long straight lines; one can construct a path with the same
endpoints which is totally edge-disjoint from the original and only two edges longer by simply first taking a step perpendicular to
the original path, following a translated version of the original path, and then taking a step back.
Our basic strategy will be: given a unique geodesic with some nice properties,
find two words of bounded length which when appended to the beginning and end of this path give
a path which has the same endpoints but misses a positive proportion of the edges in the original path; this
constructs detours for ``nice'' unique geodesics.
Then, if every unique geodesic contains a subpath with nice properties with length at least a positive proportion of the original length,
we can now construct a detour for a general unique geodesic simply by replacing the nice subpath with its detour.

The first large class of groups for which we prove that all Cayley graphs admit detours is the following:
\finitenormalsubgroup
\begin{proof}
   Set $\ell := \max_{f \in F} |f|$ (where $|f|$ is the minimum length of a word $w \in A^*$ with $\rho(w) = f$).
   Let $\pi \in A^*$ be a uniquely geodesic word. We first claim that there exists a subword $\pi'$ of $\pi$ with $|\pi'| \ge \frac{ |\pi| - \ell}{\ell + 1}$
   such that no nonempty subword $v$ of $\pi'$ has $\rho(v) \in F$. To see this, consider the path in the Cayley graph of $\Gamma/F$
   corresponding to $\pi$. A subword $v$ of $\pi$ with $\rho(v) \in F$ corresponds exactly to a loop in the path in $\Gamma/F$.
   We then loop-erase; that is, we can find a collection of disjoint subwords $\beta_1,...,\beta_k$ of $\pi$, 
   corresponding to loops in the path in $\Gamma/F$,
   such that if we take the word $\pi''$ obtained from $\pi$ by removing these subwords, we get a path in $\Gamma/F$ with the same endpoints
   (that is, $\rho(\pi'')$ and $\rho(\pi)$ have the same image under the map $\Gamma \to \Gamma/F$) and this path does not have any loops
   (that is, for all nonempty subwords $v$ of $\pi''$, we have $\rho(v) \notin F$).
   Note that since $\pi$ represents a geodesic path in the Cayley graph of $\Gamma$, we have
   \[
      |\rho(\pi)| = |\pi| = |\pi''| + \sum_{i=1}^k |\beta_i|,
   \]
   but on the other hand, since $\rho(\pi)$ and $\rho(\pi'')$ have the same image in $\Gamma/F$, there exists $f \in F$
   such that $f \rho(\pi'') = \rho(\pi)$, and therefore
   \[
      |\rho(\pi)| \le |f| + |\rho(\pi'')| \le \ell + |\pi''|,
   \]
   whence we conclude that
   \[
      \sum_{i=1}^k |\beta_i| \le \ell
   \]
   and hence (assuming without loss of generality that the $\beta_i$ are nonempty) also that $k \le \ell$.
   Since $\pi''$ was obtained from $\pi$ by deleting $k$ subwords from $\pi$, it is equal to the concatenation of 
   at most $k+1$ subwords of $\pi$.
   Thus, by the pigeonhole principle, there exists a subword $\pi'$ of $\pi$ with
   \[
      |\pi'| \ge \frac{|\pi''|}{k+1} \ge \frac{|\pi| - \ell}{\ell + 1},
   \]
   and such that no nonempty subword $v$ of $\pi'$ has $\rho(v) \in F$,
   as desired.
   
   Now, we construct a detour for $\pi'$. Let $f \in F \setminus \{1\}$. Since $F$ is normal, 
   $f^{\rho(\pi')} := (\rho(\pi'))^{-1} f \rho(\pi') \in F$. Taking geodesics $\gamma_1 \in A^*$ from 1 to $f$ and $\gamma_2 \in A^*$
   from 1 to $(f^{\rho(\pi')})^{-1}$ and setting
   \[
      \tilde{\pi}' := \gamma_1\pi' \gamma_2
   \]
   gives a path with $\rho(\tilde{\pi}') = f \rho(\pi') (f^{\rho(\pi')})^{-1} = \rho(\pi')$ (that is, $\tilde{\pi}'$ has the same endpoints as $\pi'$), and
   \[
      |\tilde{\pi}'| = |\pi'| + |\gamma_1| + |\gamma_2| \le |\pi'| + 2 \ell.
   \]
   We now show that if $\tilde{\pi}'$ intersects $\pi'$ it only does so in the first $\ell$ or the last $\ell$ edges of $\pi'$.
   Note that it suffices to show that the vertex sets $V(\pi')$ and $V(\tilde{\pi}')$ only intersect in the first $\ell + 1$ or the last $\ell + 1$ vertices of $V(\pi')$.
   (Recall that if $S \subset E$, then $V(S) \subset V$ is the set of vertices which are endpoints of some edge $e \in S$.)
   Note also that intersections of $V(\pi')$ and $V(\tilde{\pi}')$ correspond to initial subwords of $\pi'$ and $\tilde{\pi}'$ which have the same image 
   under $\rho: A^* \to \Gamma$.
   
   So first, suppose that for some initial subword $v$ of $\gamma_1$ and some initial subword $\pi_1$ of $\pi'$ we have that
   \[
      \rho(v) = \rho(\pi_1);
   \]
   geodesicity of $\pi'$ implies that $|\pi_1| \le |v| \le \ell$; that is,
   the intersection of $V(\pi')$ and $V(\tilde{\pi}')$ has happened in the first $\ell + 1$ vertices of $V(\pi')$.
   Similarly, if for some initial subword $v$ of $\gamma_2$ 
   and some initial subword $\pi_1$ of $\pi'$ we have
   \[
      \rho(\gamma_1 \pi' v) = \rho(\pi_1);
   \]
   taking the $v'$ to be the subword of $\gamma_2$ such that $\gamma_2 = vv'$ we get
   \[
      \rho(\pi') = \rho(\gamma_1 \pi' \gamma_2) = \rho(\pi_1) \rho(v'),
   \]
   and taking $\pi_2$ to be the subword of $\pi'$ such that $\pi' = \pi_1 \pi_2$ we get
   \[
      \rho(\pi_1) \rho(\pi_2) = \rho(\pi') = \rho(\pi_1) \rho(v') \Rightarrow
      \rho(\pi_2) = \rho(v')
   \]
   by cancellation, and
   so by geodesicity of $\pi'$  we conclude that $|\pi_2| \le |v'| \le \ell$; that is, the intersection of $V(\pi)'$ and $V(\tilde{\pi}')$
   has happened in the last $\ell + 1$ vertices.
   The only remaining case is that there exist initial subwords $\pi_1$ and $\pi_2$ of $\pi$ such that
   \[
      \rho(\gamma_1 \pi_1) = \rho(\pi_2),
   \]
   and hence
   \[
      f \rho(\pi_1) = 
      \rho(\pi_1) f^{\rho(\pi_1)} = \rho(\pi_2) \Rightarrow f^{\rho(\pi_1)}  = \rho(\pi_1)^{-1} \rho(\pi_2) \in F \setminus \{1\}.
   \]
   But since $\pi_1$ and $\pi_2$ are both initial subwords of $\pi'$, there exists some subword $\pi_3$ of $\pi$ such that
   $\rho(\pi_1)^{-1} \rho(\pi_2) = \rho(\pi_3)^{\pm}$, but then $\pi_3$ is a subword of $\pi'$ with $\rho(\pi_3) \in F \setminus \{1\}$,
   which contradicts our construction of $\pi'$, so this case cannot occur. 
   Thus we have proven that $V(\pi')$ and $V(\tilde{\pi}')$ only intersect in the first $\ell +1 $ or the last $\ell + 1$ vertices of $V(\pi')$,
   and so in particular $|\pi' \setminus \tilde{\pi}'| \ge |\pi'| - 2\ell$.
   
   Finally, if $\pi = \alpha \pi' \omega$, set $\tilde{\pi} = \alpha \tilde{\pi}' \omega$. We then have that $\rho(\pi) = \rho(\tilde{\pi})$,
   that is, $\pi$ and $\tilde{\pi}$ have the same endpoints, and
   \[
      |\tilde{\pi}| - |\pi| = |\tilde{\pi}'| - |\pi'| \le 2 \ell,
   \]
   while
   \[
      |\pi \setminus \tilde{\pi}| \ge |\pi' \setminus \tilde{\pi}'| \ge |\pi'| - 2 \ell \ge \frac{ |\pi| - \ell }{\ell + 1} - 2 \ell.
   \]
   Thus, given $\epsilon > 0$, if we choose $C$ large enough that $(2 \ell) \left(\frac{ C - \ell }{\ell + 1} - 2 \ell \right)^{-1} \le \epsilon$,
   then for any unique geodesic $\pi$ with $|\pi| \ge C$,
   we have that
   \[
      |\tilde{\pi}| - |\pi| \le \epsilon |\pi \setminus \tilde{\pi}|,
   \]
   that is, $\tilde{\pi}$ is an $\epsilon$-detour for $\pi$.
\end{proof}

For other classes of groups, we will need to treat two cases separately: either we can construct a detour for our unique geodesic
with a simple construction, or our unique geodesic has a very special form.
This is made precise in the following lemma, which we use several times below:
\begin{lemma} \label{lem:grouptodetour}
   Let $\Gamma$ be a finitely generated group and let $H \le \Gamma$ be a subgroup of finite index. Let $G$ be a Cayley graph
   associated to a generating set $S$ for $\Gamma$, $A$ be the relevant alphabet.
   Suppose that there exists some $z \in H \setminus \{1\}$ such that for all $h \in H$
   \[
      |z^h| := |h^{-1} z h| = |z|.
   \]
   Fix $w \in A^*$ a geodesic from $1$ to $z$, and for each $H$-conjugate $z^h$ of $z$
   fix $w^{h} \in A^*$ a geodesic from $1$ to $z^h$ (note the property of $z$
   ensures that $|w|=|w^h|$ for all $h \in H$). 
   Let $\pi \in A^*$ be a unique geodesic in $G$ such that $\rho(\pi) \in H$. Then:
   \begin{enumerate}
      \item 
      If there exist $\alpha, \omega \in A^*$, $h \in H$ such that $\rho(\alpha), \rho(\omega) \in H$ and $\pi = \alpha w^h \omega$,
      then $\pi$ is an initial subpath of $(w^{h'})^N$ for some $h' \in H$, $N < \infty$.
      If $\pi = \alpha (w^h)^{-1} \omega$ then $\pi$ is an initial subpath of $((w^{h'})^{-1})^N$ for some $h' \in H, N < \infty$.
      \item Suppose that the condition above fails, that is, for all choices of $\alpha, \omega \in A^*$ and $h \in H$ with
      $\rho(\alpha), \rho(\omega) \in H$, we have
      $\pi \ne \alpha w^h \omega$ and $\pi \ne \alpha (w^h)^{-1} \omega$. Then the path
      \[
         \pi' := w \pi (w^{\rho(\pi)})^{-1}
      \]
      has the same endpoints as $\pi$ and satisfies
      \[
         |V(\pi') \cap V(\pi) \cap H| \le 2(|w|+1),
      \]
      and hence
      \[
         | V(\pi) \setminus V(\pi') | \ge | (V(\pi) \setminus V(\pi')) \cap H | = |V(\pi) \cap H| - |V(\pi' \cap \pi \cap H)| \ge |V(\pi) \cap H| - 2(|w| +1)
      \]
      so that
      \[
         |\pi \setminus \pi'| \ge \frac{1}{2} |V(\pi) \setminus V(\pi')| \ge \frac{1}{2}|V(\pi) \cap H| - |w| - 1.
      \]
   \end{enumerate}
\end{lemma}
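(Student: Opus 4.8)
The two parts draw on the same two ingredients: uniqueness of geodesics passes to subpaths and is translation invariant, and the identity $g z^h g^{-1} = z^{hg^{-1}}$ for $g,h\in H$, which keeps us inside the family of $H$-conjugates of $z$ (each with its fixed geodesic representative $w^{h'}$ of common length $|w|$). The trivial part of (2) is the endpoint computation: since $\rho\colon A^*\to\Gamma$ is a monoid homomorphism respecting inversion, $\rho(\pi')=\rho(w)\,\rho(\pi)\,\rho(w^{\rho(\pi)})^{-1}=z\,\rho(\pi)\,(z^{\rho(\pi)})^{-1}=z\,\rho(\pi)\,\rho(\pi)^{-1}z^{-1}\rho(\pi)=\rho(\pi)$, so $\pi$ and $\pi'$ have the same endpoints.

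For part (1) the plan is a propagation/rigidity argument. Starting from the given copy of $w^{h}$ inside $\pi$ sitting at an $H$-coset boundary (its two endpoints $\rho(\alpha)$ and $\rho(\alpha w^h)=\rho(\alpha)z^{h}$ lie in $H$), I would track the successive positions at which $\pi$ visits $H$ and show, one block at a time moving both forward into $\omega$ and backward into $\alpha$, that the segment of $\pi$ between two consecutive such $H$-visits is again forced to be a translate of a fixed $w$-block: any competing geodesic of the same length to the same $H$-conjugate endpoint would contradict uniqueness of $\pi$. A combinatorics-on-words step (consistency of the overlaps of these forced $w$-blocks, in the spirit of Fine--Wilf) then pins the common block and the period $|w|$, so that $\pi$ is a prefix of the purely periodic word $(w^{h'})^{\omega}$; since $\pi$ starts at $1\in H$, this reads as $\pi$ being an initial subpath of some $(w^{h'})^{N}$, and the $(w^h)^{-1}$ case is identical after replacing $z$ by $z^{-1}$. \emph{I expect this propagation step to be the main obstacle:} the intermediate vertices of a $w$-block need not lie in $H$, so one cannot simply read off where the next $H$-return is, and one must argue via word-lengths (using $|z^{h}|=|z|$) to locate it exactly $|w|$ steps later.

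For part (2) I would argue with vertex sets. Writing the vertices of $\pi$'s path as $1=p_0,p_1,\dots,p_m=\rho(\pi)$, unwinding $\pi'=w\,\pi\,(w^{\rho(\pi)})^{-1}$ gives
\[
   V(\pi')=V(w)\;\cup\;z\cdot V(\pi)\;\cup\;\rho(\pi)\cdot V\!\left(w^{\rho(\pi)}\right),
\]
with the first and third pieces having $|w|+1$ vertices each. The key claim is that the middle piece contributes nothing over $H$: $\bigl(z\cdot V(\pi)\bigr)\cap V(\pi)\cap H=\emptyset$. Indeed such a common vertex would be $p_i=z p_j\in H$ with $i\neq j$ (if $i=j$ then $z=1$, excluded), whence $p_i,p_j\in H$ since $z\in H$; the subpath of $\pi$ joining $p_j$ and $p_i$ is a unique geodesic whose word, after translating to start at $1$, is a geodesic from $1$ to $z^{p_j}$ (if $j<i$) or to $(z^{p_i})^{-1}$ (if $j>i$) of length $|z|$. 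But $w^{p_j}$, resp. $(w^{p_i})^{-1}$, is also such a geodesic, so uniqueness forces the subword of $\pi$ to equal it; this exhibits $\pi$ in the forbidden form $\alpha\,w^{p_j}\,\omega$ with $\rho(\alpha)=p_j\in H$, resp. $\alpha\,(w^{p_i})^{-1}\,\omega$ with $\rho(\alpha)=p_i\in H$, contradicting the standing hypothesis of (2). Hence $V(\pi')\cap V(\pi)\cap H\subseteq V(w)\cup\rho(\pi)V(w^{\rho(\pi)})$, a set of at most $2(|w|+1)$ vertices.

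The rest is bookkeeping, exactly as displayed in the statement. From the above, $|(V(\pi)\setminus V(\pi'))\cap H|=|V(\pi)\cap H|-|V(\pi')\cap V(\pi)\cap H|\ge|V(\pi)\cap H|-2(|w|+1)$, so $|V(\pi)\setminus V(\pi')|\ge|V(\pi)\cap H|-2(|w|+1)$. Finally, every vertex of $\pi$ not lying on $\pi'$ is incident to an edge of $\pi$ that is not an edge of $\pi'$ (an edge of $\pi'$ has both endpoints in $V(\pi')$), and each edge has at most two endpoints, so $|\pi\setminus\pi'|\ge\tfrac12|V(\pi)\setminus V(\pi')|\ge\tfrac12|V(\pi)\cap H|-|w|-1$, which is the claimed inequality.
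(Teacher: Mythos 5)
Part (2) of your proposal is correct and takes essentially the same route as the paper: you decompose $V(\pi')$ into the three translated pieces $V(w)$, $z\cdot V(\pi)$, and $\rho(\pi)\cdot V(w^{\rho(\pi)})$, show the middle piece contributes nothing to $V(\pi)\cap H$ (a common vertex in $H$ would, by unique geodesicity, force a subword of $\pi$ to equal some $w^{h}$ or $(w^{h})^{-1}$, contradicting the standing hypothesis), and then finish with the same counting, including the correct $\tfrac12$ factor via the ``every missing vertex is incident to a missing edge'' map.

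Part (1), however, has a genuine gap, and you essentially flagged it yourself. You propose to propagate ``one block at a time,'' asserting that the segment of $\pi$ between two consecutive $H$-visits is forced to be a translate of a $w$-block, and then to reconcile the overlaps by a Fine--Wilf style argument. But the unit between consecutive $H$-visits is a \emph{minimal} $H$-block (one of the pieces $\eta_i$, $\iota_i$, $\kappa_i$ in the paper's notation), not a full copy of $w^{h}$; a copy of $w^{h}$ is generally a concatenation of several such blocks, and indeed, as you note, the intermediate vertices of a $w$-block need not lie in $H$. This is precisely why a naive block-by-block propagation does not locate where the next $w$-period starts. The paper sidesteps this entirely: it decomposes $\alpha$, $w^{h}$, $\omega$ into their minimal $H$-blocks and then \emph{swaps} $w^{h}$ across $\alpha$ all at once using the identity
\[
\rho(\alpha\,w^{h}) \;=\; \rho\bigl(w^{\,h\,\rho(\alpha)^{-1}}\,\alpha\bigr),
\]
valid by centrality of the conjugacy class under $H$; since $\alpha w^{h}$ is an initial subword of the unique geodesic $\pi$, both words are geodesics of the same length to the same endpoint and hence are \emph{equal as words}. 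Matching minimal-block decompositions then shows (if $\alpha$ has fewer $H$-blocks than $w^{h}$) that $\alpha$ is a final subword of a $w$-conjugate, or else $\alpha$ strips off a full $w$-conjugate from its end and one inducts. The same argument runs symmetrically on $\omega$, yielding that $\pi$ is an initial subpath of a power of a single conjugate $w^{h'}$, with $h'$ determined by the cyclic shift. You never produce this swap identity or a substitute for it, and the Fine--Wilf idea does not obviously apply because the forced $w$-blocks you want to compare need not overlap (e.g.\ when $\alpha$ is a proper subword of $w^{h}$). So Part (1) as written is an unfinished plan rather than a proof, with the missing step being exactly the ``main obstacle'' you identified.
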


Before proving the lemma, we give context by proving the implications we need it for.
\begin{prop} \label{prop:Zdetour}
   Let $G$ be a Cayley graph of $\Z$. Either $A = \{a, a^{-1}\}$ for some $a \in A$, in which case
   $G$ is isomorphic to the standard Cayley graph of $\Z$, or $G$ admits detours.
\end{prop}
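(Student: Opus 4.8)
The plan is to invoke Proposition~\ref{prop:detourequiv}, so that it suffices --- for each $\epsilon>0$ --- to find a $C$ such that every unique edge-geodesic of length $C$ admits an $\epsilon$-detour, and to obtain such detours from Lemma~\ref{lem:grouptodetour} applied with $\Gamma=H=\Z$. Write $I:=\{\rho(a):a\in A\}\subset\Z\setminus\{0\}$ for the set of generator values; $I$ is symmetric under negation and generates $\Z$, so $\gcd I=1$ and $|I|$ is even. I would split on $|I|$.

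Suppose first $|I|=2$; then $I=\{n,-n\}$ with $\gcd I=1$, so $I=\{1,-1\}$. If moreover $A=\{a,a^{-1}\}$, then consecutive integers are joined by exactly one edge and $G$ is the standard Cayley graph of $\Z$, the first alternative. If instead $A\ne\{a,a^{-1}\}$, then $G$ must be an unreduced Cayley graph on at least two generators, each of value $\pm1$; a short case-check (on whether two of them agree or are opposite) shows every pair of consecutive integers is joined by a pair of parallel edges. Then no geodesic of positive length is a unique geodesic --- one can always swap its first edge for a parallel copy --- so the criterion of Proposition~\ref{prop:detourequiv} is vacuously met with $C=1$ and $G$ admits detours.

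The substantive case is $|I|\ge3$, hence $|I|\ge4$, so $I$ has at least two positive values (and then $|A|\ge|I|>2$, so we are in the second alternative). I would set $M:=\max I$, pick $p\in I$ with $0<p<M$, and put $z:=p+M$; since $z>M\ge$ everything in $I$, $z\notin I$, so $d(0,z)=2$ and the two length-two words $w:=\hat p\hat M$ and $\hat M\hat p$ (with $\hat p,\hat M\in A$ realizing the values $p,M$) are \emph{distinct} geodesics from $0$ to $z$. The key point is that for every $k\ge1$ the word $(\hat M\hat p)\,w^{k-1}$ has the same endpoints, length, and $\rho$-value as $w^k$ but is a different path, so $w^k$ is never a unique geodesic; likewise $(w^{-1})^k=(w^k)^{-1}$ is never a unique geodesic. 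Now apply Lemma~\ref{lem:grouptodetour} with $\Gamma=H=\Z$ (the abelian hypothesis gives $z^h=z$, so one may take $w^h:=w$ for all $h$), fix $\epsilon>0$, and set $C:=\lceil 5+8/\epsilon\rceil$. Given a unique geodesic $\pi$ of length $C$: alternative (1) of the lemma would make $\pi$ an initial subpath of $w^N$ or $(w^{-1})^N$, and since $C\ge2=|w|$ this would exhibit $w$ (or $w^{-1}$) as an initial subword of $\pi$, hence itself a unique geodesic --- impossible. So alternative (2) applies, yielding $\pi':=w\pi w^{-1}$, a path on $C+4$ edges with the same endpoints as $\pi$ and, since $V(\pi)\subset\Z=H$ and $\pi$ is vertex-self-avoiding, with $|\pi\setminus\pi'|\ge\tfrac12(C+1)-3$. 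As $|\pi'|-|\pi|\le2|w|=4\le\epsilon|\pi\setminus\pi'|$ by the choice of $C$, this forces $|\pi'\setminus\pi|\le(1+\epsilon)|\pi\setminus\pi'|$, i.e.\ $\pi'$ is an $\epsilon$-detour for $\pi$. Since $\epsilon$ was arbitrary, $G$ admits detours along unique geodesics, hence admits detours.

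The crux is the choice of input to Lemma~\ref{lem:grouptodetour} in the last case: $z$ and the geodesic $w$ to it must be arranged so that the lemma's ``special-form'' alternative (1) cannot hold for long unique geodesics, which comes down to ensuring no power $w^k$ is itself a unique geodesic. The choice $z=p+M$ does this uniformly as soon as $|I|\ge4$ --- the ``commuted'' geodesics $\hat p\hat M$ and $\hat M\hat p$ spoil uniqueness of every $w^k$ at once --- and this is precisely where the hypothesis $A\ne\{a,a^{-1}\}$ enters, forcing $|I|\ge4$ outside the parallel-edge case handled above. Everything else is bookkeeping with the inequalities supplied by Lemma~\ref{lem:grouptodetour}.
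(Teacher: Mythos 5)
Your proof is correct, but it routes through Lemma~\ref{lem:grouptodetour} differently from the paper. Both arguments invoke Proposition~\ref{prop:detourequiv} and then apply Lemma~\ref{lem:grouptodetour} with $H=\Gamma=\Z$, but the paper takes $w:=a$ a single generator. Its alternative~(1) then allows the exceptional form $\pi=a^N$ or $(a^{-1})^N$, which the paper handles by a second application of the lemma with a different generator $s$ (which necessarily falls into alternative~(2), since $s,s^{-1}$ cannot appear in $a^{\pm N}$), producing $\pi'=s\pi s^{-1}$ in that branch. You instead construct a single length-two word $w=\hat p\hat M$ whose target $z=p+M$ has two distinct geodesics from the identity (namely $\hat p\hat M$ and $\hat M\hat p$), so that no power $w^{\pm k}$ is ever a unique geodesic; consequently alternative~(1) of the lemma is outright impossible for a unique geodesic $\pi$ of length $\ge 2$, and a single application of alternative~(2) covers all cases. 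This buys uniformity (one construction of $\pi'$ for every unique geodesic) at the price of a preliminary case analysis to secure $|I|\ge 4$ and a separate but easy treatment of the parallel-edge case $I=\{1,-1\}$, $A\ne\{a,a^{-1}\}$ --- a case the paper handles only implicitly, since there no positive-length geodesic is unique and the lemma's conclusions are vacuous. Your version is also somewhat more explicit about the unreduced-Cayley-graph edge cases. The constants differ ($C\ge 8/\epsilon+5$ versus the paper's $C\ge 4/\epsilon+4$), but both suffice.
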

\begin{proof}
   If $A = \{a,a^{-1}\}$, then clearly $G$ is isomorphic to the standard Cayley graph of $\Z$.
   Assume that there exists $s \in A \setminus \{a,a^{-1}\}$ for some $a \in A$. Letting $H = \Gamma = \Z$, $w=a$,
   we see that the assumptions of the lemma are satisfied, since $\Z$ is abelian and hence the conjugation action is trivial
   (we also set $w^h = a$ for all $h \in H$).
   Therefore, if $\pi$ is any unique geodesic in $\Gamma$, either $\pi = a^N, \pi = (a^{-1})^N$, or $\pi$ does not contain $a$ or $a^{-1}$.
   By part (2) of the lemma, in the latter case, setting $\pi' := a \pi a^{-1}$ gives a path with the same endpoints such that\footnote{
   By looking more closely at the paths, one can see that the prefactor of $\frac{1}{2}$ is not necessary;
   the crude bound $|\pi \setminus \pi'| \ge \frac{1}{2} |V(\pi) \setminus V(\pi')|$ from which it comes could be avoided by a more
   careful argument,  but this is not necessary for our purposes.}
   \[
      |\pi \setminus \pi'| \ge \frac{1}{2}|\pi| - 2.
   \]
   Now consider the former case, i.e. $\pi = a^N$ or $\pi = (a^{-1})^N$. Note that taking $H = \Gamma = \Z$ and $w=s$
   also satisfies the hypotheses of the lemma, and since $s, s^{-1}$ do not appear in $\pi$, by part (2) of the lemma,
   taking $\pi' := s \pi s^{-1}$ gives a path with the same endpoints as $\pi$ and
   \[
      |\pi \setminus \pi'| \ge \frac{1}{2}|\pi| - 2.
   \]
   Thus, for any $\epsilon > 0$, if $C$ is sufficiently large that $2 \le \epsilon(\frac{C}{2} - 2)$ then we have that if $\pi$ is 
   a unique geodesic with $|\pi| \ge C$ then with the above construction of $\pi'$ we have
   \[
      |\pi'| - |\pi| = 2 \le \epsilon |\pi \setminus \pi'|,
   \]
   as desired.
\end{proof}
\begin{prop} \label{prop:dihedraldetour}
   Let $G$ be a Cayley graph of $\Gamma := \Z/2 * \Z/2$. Either $G$ is a reduced Cayley graph associated
   to a generating set $S$ such that $f(S)$ consists of exactly two elements of order 2, in which case $G$ is isomorphic to the standard
   Cayley graph of $\Z$, or $G$ admits detours.
\end{prop}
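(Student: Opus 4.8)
The plan is to mirror the proof of Proposition~\ref{prop:Zdetour}. Write $\Gamma = \Z/2 * \Z/2 = \langle a, b \mid a^2 = b^2 = 1\rangle$, the infinite dihedral group, and let $H := \langle ab\rangle \cong \Z$: this is an index-$2$ normal abelian subgroup on which conjugation by any $g \in \Gamma$ acts as $z \mapsto z$ (if $g \in H$) or $z \mapsto z^{-1}$ (if $g \notin H$), so every $z \in H \setminus \{1\}$ satisfies the hypothesis $|z^h| = |z|$ for $h \in H$ of Lemma~\ref{lem:grouptodetour}. Recall also that $\Gamma \setminus H$ consists entirely of involutions. By Proposition~\ref{prop:detourequiv}, for the ``admits detours'' half it suffices to exhibit an $\epsilon$-detour for every unique geodesic of sufficiently large length.

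First I would verify the exceptional case: if $G$ is the reduced Cayley graph attached to $S$ with $f(S) = \{r_1, r_2\}$ a set of two involutions, then every vertex $g$ has exactly the two neighbours $gr_1, gr_2$, so $G$ is a connected $2$-regular infinite graph, hence a bi-infinite line, i.e.\ the standard Cayley graph of $\Z$. So assume from now on that $G$ is \emph{not} of this form --- i.e.\ either $G$ is an unreduced Cayley graph, or $G$ is reduced and $f(S)$ is not a two-element set of involutions --- and let me show $G$ admits detours; note that in either case the associated alphabet $A$ is not of the form $\{c, c^{-1}\}$.

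Fix a unique geodesic $\pi$ of large length. If some edge of $\pi$ carries a label $c$ outside $H$ (necessarily an involution), then either $G$ is unreduced, in which case that edge is parallel to a distinct edge of $G$ with the same endpoints and rerouting through it gives a length-preserving $\epsilon$-detour; or we let $e$ be the \emph{last} such edge of $\pi$ and split $\pi = \sigma_1 \cdot e \cdot \sigma_2$, so that $\rho(\sigma_1), \rho(\sigma_2) \in H$ and $|\sigma_1| + |\sigma_2| = |\pi| - 1$, whence the longer of $\sigma_1, \sigma_2$ is a unique geodesic of length $\geq (|\pi|-1)/2$ with endpoint in $H$ and an $\epsilon$-detour for it substitutes into $\pi$. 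This reduces everything to unique geodesics $\sigma$ with $\rho(\sigma) \in H$. For such $\sigma$ I would apply Lemma~\ref{lem:grouptodetour} with $H$ as above and $z = \tau \in H \setminus \{1\}$ a fixed element represented by a word $w$ with $|w| \leq 2$ (take $\tau = r_1 r_2$ if $A$ contains two distinct involutions, and $\tau$ a non-involution generator otherwise): part~(2) of the lemma produces the detour $\sigma' = w\,\sigma\,w^{-1}$, of bounded length excess, with $|\sigma \setminus \sigma'| \gtrsim |\sigma|$, \emph{unless} --- exactly as in Proposition~\ref{prop:Zdetour} --- $\sigma$ is, up to translation, an initial subword of a power of $w$ or of a power of $w^{-1}$.

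It remains to treat this exceptional ``straight'' case, which is where the bulk of the work lies and where the hypothesis that $G$ is not the line is used. Here Lemma~\ref{lem:grouptodetour} is of no further help --- every element of $H$ is a word in the letters occurring in $w$, so part~(1) of the lemma is always triggered for any choice of $z \in H$ --- so instead I would use an involution generator $c \in A$ and conjugate: $c \cdot \bar\sigma \cdot c^{-1}$, where $\bar\sigma$ is a geodesic to $\rho(\sigma)^{c} = c\,\rho(\sigma)\,c^{-1}$, is a path with the same endpoints as $\sigma$, of length $|\bar\sigma| + 2$, whose interior lies in the coset $Hc \neq H$ and which is therefore edge-disjoint from $\sigma$ off a bounded set. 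The two points to check are that $\bar\sigma$ can be chosen with $|\bar\sigma| = |\sigma| + O(1)$ (true because, for the finitely many $(\tau, c)$ arising, the automorphism $z \mapsto z^{c}$ is an isometry on the relevant geodesics) and that this path is not $\sigma$ itself --- the latter being precisely the statement that $G$ has a second $H$-coset reachable ``alongside'' $\sigma$, which fails only for the line. Finitely many such $\tau$ (one per exceptional family) handle all $\sigma$ uniformly, and then $|\sigma'| - |\sigma| = O(1) \leq \epsilon\,|\sigma \setminus \sigma'|$ for $|\sigma|$ large. The main obstacle is exactly this last step: making the ad hoc ``reflect into the adjacent coset'' construction precise and uniform across all Cayley graphs of $\Gamma$ that are not the line.
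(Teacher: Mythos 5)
Your overall architecture (carve out the $\Z$-line, reduce to geodesics landing in the index-$2$ rotation subgroup, apply Lemma~\ref{lem:grouptodetour}, treat the ``power of $w$'' residual case by hand) parallels the paper's, but you instantiate the lemma with $H = \langle ab\rangle$ whereas the paper takes $H = \Gamma$ throughout. The paper's choice is not cosmetic: with $H = \Gamma$ the lemma applies to \emph{every} unique geodesic with no coset bookkeeping, so no reduction step is needed at all; with $H = \langle ab\rangle$ you have to do a reduction, and yours has a parity bug. Writing $\pi = \sigma_1 \cdot e \cdot \sigma_2$ at the \emph{last} involution-labelled edge only guarantees $\rho(\sigma_2) \in H$; $\rho(\sigma_1) \in H$ holds exactly when $\pi$ has an odd number of involution edges. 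This is fixable (trim at most one edge from each end of $\pi$), but it is not what you wrote, and more importantly it still does not give a long subword with $\rho \in H$ if the edges alternate cosets --- it only gives the whole trimmed path, which has not lost its involution edges.

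The substantive gap is exactly the one you flag at the end, and it is not a formality. Your ``reflect into the adjacent coset'' argument --- the claim that $c\,\bar\sigma\,c^{-1}$ has interior in $Hc$ and is hence essentially disjoint from $\sigma$ --- is only valid when $\bar\sigma$ stays inside $H$, i.e.\ when $w$ is a single non-involution generator and $\sigma = w^{\pm M}$. In the genuinely hard sub-case, where $f(S)$ consists entirely of involutions and $w = r_1 r_2$, the exceptional geodesic $\sigma = (r_1 r_2)^M$ itself alternates between the two cosets of $H$ at every step; so does $c\,\bar\sigma\,c^{-1}$, and the coset argument gives nothing. In fact the two paths do share vertices unless one is careful, and the reason this does not sink the argument is a unique-geodesicity fact that your sketch does not use. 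The paper's workaround in this sub-case is to conjugate by the length-$2$ \emph{rotation} $ac$ (with $c$ a third involution) rather than by the single reflection $c$, taking $\pi' := (ac)\,\pi\,(ca)^{(-1)^{|\pi|}}$. Any vertex coincidence $\rho((ac)\pi_1) = \rho(\pi_2)$ is then shown to force a subword $\tilde\pi$ of $\pi$ with $\rho(\tilde\pi) = \rho(ac)^{\pm 1}$; since $(ac)$ and $(ca)$ are geodesics and $\pi$ is uniquely geodesic, this forces $\tilde\pi$ to literally be $(ac)$ or $(ca)$, contradicting that $\pi$ is a subword of $(ab)^N$ and therefore uses only the letters $a, b$. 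That mechanism --- which turns unique geodesicity into a combinatorial constraint on the letters appearing --- is the missing ingredient in your exceptional case, and without it the ``main obstacle'' you correctly identified remains open.

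Two smaller points. Your handling of the unreduced case (double the involution edge) and of the exceptional case when a non-involution generator exists are both fine and match the paper's case~(1). And the ``reduce to a subword with $\rho \in H$ and substitute its detour into $\pi$'' step, even once the parity is fixed, needs the observation that a detour for a subword of a self-avoiding path yields a detour for the whole path (edges of $\pi'$ outside the replaced segment do not re-enter it); the paper uses this silently as well, so it is not a point of divergence, only a place to be explicit.
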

\begin{proof}
   If $G$ is a reduced Cayley graph with $A = f(S)$ consisting of exactly two elements of order 2, then $G$ is 
   a connected regular infinite graph of degree 2, and so it is isomorphic to the standard Cayley graph of $\Z$.
   We want to show that in any other case, $G$ admits detours.
   
   Recall that all nonidentity elements of $\Gamma = \Z/2 * \Z/2 \cong \Z \rtimes \Z/2$ have either infinite order or order two.
   Hence, the remaining cases to check are (1): $f(S)$ contains an element of infinite order; 
   (2): $f(S)$ contains only elements of order 2 but $G$ is an unreduced Cayley graph;
   (3): $f(S)$ contains only elements of order 2, $G$ is a reduced Cayley graph, and $|f(S)| \ge 3$.
   Also recall that any product of two distinct elements of order 2 is an element of infinite order,
   and that for any element $x \in \Gamma$ of infinite order, for any $g \in \Gamma$ we have $g^{-1}x g = x^{-1}$ if $g$ has order 2
   and $g^{-1} x g = x$ otherwise.
    
   So assume that for some $z \in A$, $\rho(z)$ has infinite order. Then for any $g \in \Gamma$ we have
   $\rho(z)^g = \rho(z)$ or $\rho(z)^g = \rho(z)^{-1} = \rho(z^{-1})$.
   Therefore if we take $H = \Gamma$ and $w = z$, the hypotheses of the lemma are satisfied, and for each $h \in H$ we can choose $w^h$
   to be either $z$ or $z^{-1}$.
   Let $\pi$ be a unique geodesic in $G$. By the lemma, either $\pi = z^N$, $\pi = (z^{-1})^N$ or $\pi$ does not contain $z$ or $z^{-1}$.
   In the latter case, by the lemma, 
   taking $\pi' := z \pi z^{-1}$ gives a path with the same endpoints and $|\pi \setminus \pi'| \ge \frac{1}{2}|\pi| - 2$.
   In the former cases, take $a \in A$ with $\rho(a)$ of order 2 (any generating set of $\Gamma$ must contain such an element).
   Taking $\pi' := a \pi^{-1} a$ then gives a path with the same endpoints as $\pi$, and $V(\pi) \cap V(\pi')$ is equal to the endpoints
   of $\pi$ and $\pi'$, since for any $0 \le n,m \le N$, $\rho(a z^{\pm n}) \ne \rho(z^{\pm m})$, since the two elements lie in distinct
   cosets of $\langle \rho(z) \rangle \le \Gamma$. Hence in fact $|\pi \setminus \pi'| = |\pi|$.
   
   Next, assume that $f(S)$ contains only elements order 2, but $G$ is an unreduced Cayley graph. In this case, every edge
   of $G$ is a double edge, so there are no unique geodesics, and so $G$ admits detours.
   
   Lastly, assume that $f(S)$ contains only elements of order 2, $G$ is a reduced Cayley graph, and $|f(S)| \ge 3$.
   Pick $a,b,c \in A$ such that $\rho(a),\rho(b),\rho(c)$ are all distinct. 
   Since $\rho(ab)$ has infinite order, again taking $w=ab$, $H = \Gamma$ satisfies
   the hypotheses of the lemma, and we can choose for each $h \in H$ either $w^h = ab$ or $w^h = ba$.
   Hence, letting $\pi$ be a unique geodesic in $G$, we have that either $\pi$ does not contain $ab$ or $ba$ as a subword,
   or that $\pi$ is a subword of $(ab)^N$ for some $N$. In the first case, by the lemma, we have that
   taking $\pi' := (ab) \pi (ba)^{(-1)^{|\pi|}}$ gives a path with the same endpoints as $\pi$ with $|\pi \setminus \pi'| \ge \frac{1}{2}|\pi| - 3$.
   In the second case, take $\pi' := (ac) \pi (ca)^{(-1)^{|\pi|}}$. Then $\pi$ and $\pi'$ have the same endpoints,
   and we claim that $|V(\pi') \cap V(\pi)|$ is contained in the union of the first two and last two vertices of $V(\pi')$,
   implying that $|\pi \setminus \pi'| \ge |\pi| - 3$. To see this, suppose to the contrary that for some
   initial subpaths $\pi_1, \pi_2$ of $\pi$ we had
   \[
      \rho( (ac) \pi_1 ) = \rho( \pi_2).
   \]
   We have $\rho( (ac) \pi_1 ) = \rho( \pi_1 (ac)^{(-1)^{|\pi_1|}} )$. Moreover, there is some subpath $\tilde{\pi}$ of $\pi$ such that
   either $\pi_2 \tilde{\pi} = \pi_1$ or $\pi_1 \tilde{\pi} = \pi_2$. 
   Then in the first case, by cancellation we have
   \[
      \rho( (ac)^{(-1)^{|\pi_1|}} ) = \rho( \tilde{\pi} ).
   \]
   As a subpath of $\pi$, $\tilde{\pi}$ is uniquely geodesic, but $(ac)$ and $(ca)$ are both geodesics, since $f(S)$ only contains elements
   of order $2$ and hence any path to an element of infinite order has length at least 2. Hence we have that $\tilde{\pi} = (ac)^{(-1)^{|\pi_1|}}$,
   contradicting our assumption that $\pi$ is a subpath of $(ab)^N$. 
   In the case that $\pi_1 \tilde{\pi} = \pi_2$, we similarly conclude that $\tilde{\pi} = (ac)^{(-1)^{|\pi_1|+1}}$, which is similarly a contradiction.
   
   Thus, in any of these three cases, given a unique geodesic $\pi$, we can produce a path $\pi'$ with the same endpoints such
   that $|\pi \setminus \pi'| \ge \frac{1}{2}|\pi| - 3$ and $|\pi'| \le |\pi| + 4$. Thus, given $\epsilon > 0$, if we choose $C$
   sufficiently large that $4 \le \epsilon(\frac{1}{2}C - 3)$, then for any self avoiding path $\pi$ of length at least $C$ have have $\pi'$ with
   \[
      |\pi'| - |\pi| \le 4 \le \epsilon |\pi \setminus \pi'|,
   \]
   as desired.
\end{proof}

\withcenter
\begin{proof}
   First, we show that we can assume without loss of generality that $H$ is not cyclic.
   Suppose that $H$ were cyclic; then $\Gamma$ would be virtually $\Z$, and therefore (see Lemma 11.4 on page 102
   of \cite{Hempel})
   there is a finite normal subgroup $F \unlhd \Gamma$ such that $\Gamma/F$ is isomorphic to either $\Z$
   or $\Z/2 * \Z/2$.
   If $F$ is not trivial, then Proposition \ref{prop:finitenormalsubgroup} tells us that any Cayey graph of $\Gamma$
   admits detours. If $F$ is trivial, then $\Gamma$ itself is isomorphic to either $\Z$ or $\Z/2 * \Z/2$, and these
   cases are excluded by assumption.
   
   So assume that $H$ is a non-cyclic finite index subgroup of $\Gamma$ with nontrivial center. Fix $z \ne 1$ a nontrivial central element of $H$
   of minimal distance to 1
   and consider a geodesic path $w \in A^*$ from $1$ to $z$. Since by definition $z^h = z$ for all $h \in H$, this choice of $w$ and $H$
   satisfy the conditions of the lemma; we also set $w^h = w$ for all $h \in H$.
   
   Take a unique geodesic $\tilde{\pi}$ in $G$. By the pigeon-hole principle, there is some $t \in \Gamma$ such that
   at least $\frac{1}{[\Gamma: H]} |V(\pi)|$ of the vertices in $V(\pi)$ lie in the coset $tH$.
   Denote by $\eta_i$ the subpath of $\tilde{\pi}$ starting at the $i^{th}$ such vertex and ending at the $(i+1)^{th}$ such vertex.
   Set $\pi := \eta_1 \cdots \eta_M$, the subpath of $\tilde{\pi}$ from the first such vertex to the last such.
   Note that each $\rho(\eta_i) \in H$, $\rho(\pi) \in H$, and that the $\eta_i$ are minimal in the sense that
   if $\alpha$ is a proper initial or final subword of some $\eta_i$, then $\rho(\alpha) \notin H$.
   
   Now, by the lemma, either $\pi$ is an initial subpath of $w^N$ or $(w^{-1})^N$ for some $N < \infty$ 
   or there are no $\alpha, \omega \in A^*$ with $\rho(\alpha), \rho(\omega) \in H$ and either $\pi = \alpha w \omega$ or 
   $\pi = \alpha w^{-1} \omega$. In the latter case, the lemma also tells us that taking $\pi' := w \pi w^{-1}$ gives a
   path with the same endpoints as $\pi$ and
   \[
      |\pi \setminus \pi'| \ge \frac{1}{2}|V(\pi) \cap H| - |w| - 1.
   \]
   
   So consider the case that $\pi$ is an initial subpath of $w^N$ (the case that $\pi$ is an initial subpath of $(w^{-1})^N$ is
   exactly analgous). First note that as long as $|\pi| > |w|$, this implies
   that all the $\eta_i$ are equal. For suppose that $\eta_1 \cdots \eta_l = w$; then we have
   \[
      \rho(\pi) = \rho(w \eta_{l+1} \cdots \eta_M) = \rho(\eta_{l+1} w \eta_{l+2} \cdots \eta_M) 
      = \rho( \eta_{l+1} \eta_1 \cdots \eta_{l} \eta_{l+2} \cdots \eta_M ),
   \]
   so unique geodesicity, together with the fact that the decomposition $w = \eta_1 \cdots \eta_M$ is uniquely specified by $w$
   and $H$, implies that $\eta_i = \eta_{i+1}$ for all $i=1,...,l-1$, so $w = \eta_1^{l}$, and hence $\pi = \eta_1^M$.
   Now take a geodesic path $\alpha$ from 1 to some element of $H \setminus \langle \rho(\eta_1) \rangle$ (which is possible since $H$
   is not cyclic) with minimal distance to 1. Let $0 \le r < l$ be minimal such that $M + r$ is a multiple of $l$. Then we set
   \[
      \pi' := \alpha \eta_1^{M+r} \alpha^{-1} \eta_1^{-r}.
   \]
   Since $\rho(\eta_1^{M+r})$ is a power of $\rho(\eta_1^l )= \rho(w) = z$, it is central in $H$, and hence we see that $\pi$ and $\pi'$ have the
   same endpoints. We further claim that 
   \[
      V(\pi') \cap V(\pi) \cap H \subset V(\alpha) \cup \rho(\alpha \eta_1^{M+r}) V(\alpha^{-1} \eta_1^{-r}),
   \]
   which implies that
   \[
      |V(\pi') \cap V(\pi) \cap H | \le 2|\alpha| + |w| + 2,
   \]
   hence $|V(\pi \setminus \pi')| \ge |V(\pi) \cap H| - (2|\alpha| + |w| + 2)$ and 
   $|\pi \setminus \pi'| \ge \frac{1}{2} |V(\pi) \setminus V(\pi')| \ge \frac{1}{2} |V(\pi) \cap H| - |\alpha| - |w| - 1$.
   
   To see this, suppose to the contrary that for some $0 \le i \le M+r, 0 \le j \le M$ we had
   \[
      \rho( \alpha \eta_1^i ) = \rho( \eta_1^j).
   \]
   Then cancellation gives us
   \[
      \rho( \alpha ) = \rho(\eta_1)^{j-i} \in \langle \rho(\eta_1) \rangle,
   \]
   contradicting our choice of $\alpha$.
   
   Thus, in either case, given $\pi$, we get $\pi'$ with the same endpoints satisfying
   \[
      |\pi'| \le |\pi| + 2(|\alpha| + |w|)
   \]
   and
   \[
      |\pi \setminus \pi'| \ge \frac{1}{2}|V(\pi) \cap H| - |\alpha| - |w| - 1.
   \]
   If $\tilde{\pi} = \pi_1 \pi \pi_2$, set $\tilde{\pi}' := \pi_1 \pi' \pi_2$.
   Then we again have
   \[
      |\tilde{\pi}'| - |\tilde{\pi}| = |\pi'| - |\pi| \le 2(|\alpha| + |w|)
   \]
   and
   \begin{align*}
      |\tilde{\pi} \setminus \tilde{\pi}'| 
      &= |\pi \setminus \pi'| \ge \frac{1}{2}|V(\pi) \cap H| - |\alpha| - |w| - 1 \\
      &= \frac{1}{2}|V(\tilde{\pi}) \cap tH| - |\alpha| - |w| - 1 \ge \frac{1}{2[\Gamma:H]} |V(\tilde{\pi})| - |\alpha| - |w| - 1.
   \end{align*}
   
   Note that $|w| = |z|$ is a constant independent of the path $\tilde{\pi}$. Moreover,
   \[
      |\alpha| \le \sup_{h \in H} \inf_{h' \in H \setminus \langle h \rangle} |h'| =: K < \infty,
   \]
   where $K$ is a constant also independent of the path $\tilde{\pi}$.
   To see that $K$ is finite, take a finite generating set $S'$ for $H$ ($H$ is finitely generated by Schreier's lemma, as a finite index subgroup
   of the finitely generated group $\Gamma$.) Since $H$ is not cyclic, for every $h \in H$, there is some $s \in S'$ such that
   $s \notin \langle h \rangle$, and so $K \le \sup_{s \in S'} |s| < \infty$, as desired.
   
   Thus, given $\epsilon > 0$, if we choose $C > |w|$ sufficiently large that 
   $2(K + |w|) \le \epsilon\left(\frac{1}{2[\Gamma : H]} C - K - |w| -1 \right)$, then for any unique geodesic $\tilde{\pi}$ in $G$
   of length at least $C$ we have $\tilde{\pi'}$ with the same endpoints such that
   \[
      |\tilde{\pi'}| - |\tilde{\pi}| \le \epsilon |\tilde{\pi} \setminus \tilde{\pi}'|,
   \]
   as desired.
\end{proof}
\begin{rmk}
   The preceding three propositions are in some sense as far as we can push the lemma. If there exists $z \ne 1$
   such $|z^h|=|z|$ for all $h \in H$, in particular the $H$-orbit of $z$ under the conjugation action is finite;
   hence by the orbit stabilizer theorem, $\mathrm{Stab}_H(z) \le H$ is a finite index subgroup of $H$
   with nontrivial center (the center contains $z$). Thus if $H$ is itself finite index in $\Gamma$, 
   $\Gamma$ has a finite index subgroup with nontrivial center, and one of the preceding propositions apply.
   The lemma still holds if $H$ is not finite index, but in that case it is not clear how to use it to construct 
   detours for $G$.
\end{rmk}

Now to prove the lemma.
\begin{proof}[Proof of Lemma \ref{lem:grouptodetour}]
   Let $\pi$ be a unique geodesic with $\rho(\pi) \in H$
   and suppose that for some $\alpha, \omega \in A^*, h \in H$, we have $\rho(\alpha), \rho(\omega) \in H$
   and $\pi = \alpha w^h \omega$. (The case that $\pi = \alpha (w^h)^{-1} \omega$ is exactly analogous).
   Note that since $\rho(\alpha), \rho(w^h), \rho(\omega) \in H$, we have decompositions
   \[
      \alpha = \iota_1 \cdots \iota_j,
   \]
   \[
      w^h = \eta_1 \cdots \eta_l
   \]
   \[
      \omega = \kappa_1 \cdots \kappa_m,
   \]
   where each $\rho(\iota_i), \rho(\eta_i), \rho(\kappa_i) \in H$ and if $\gamma$ is an initial or final proper 
   subpath of some $\iota_i, \eta_i,$ or $\kappa_i$,
   then $\rho(\gamma) \ne H$. Note that this property also uniquely specifies the decomposition.
   
   Now we will show that  
   $\alpha$ is a final subword of some $(w^h)^N$ and 
   that $\omega$ is an initial subword of some $(w^h)^N$.
   First, suppose that $j \le l$. We have that
   \[
      \rho(\alpha \eta_1 \cdots \eta_l) = \rho(\alpha w^h) = \rho(w^{h \rho(\alpha)^{-1}} \alpha) = \rho(w^{h \rho(\alpha)^{-1}} \iota_1 \cdots \iota_j)
   \]
   where $|w^{h \rho(\alpha)}| = |w^h|$; hence by unique geodesicity we have that the above paths are equal. By the uniqueness of the
   decomposition, we then have that $\iota_i = \eta_{i + l - j}$ for $i=1,...,j$, that is, $\alpha$ is a final subword 
   of $w$
   If $j > l$, we have
   \[
      \rho(\iota_1 \cdots \iota_j w^h) = \rho( \iota_1 \cdots \iota_{j-l} w^{h \rho(\iota_{j-l+1} \cdots \iota_j)^{-1}} \iota_{j-l+1} \cdots \iota_j),
   \]
   so that by unique geodesicity, $\iota_{j-l+1} \cdots \iota_j = w^h$, that is $\alpha = \alpha' w$ for some $\alpha'$ of shorter length.
   Thus, by induction, $\alpha$ is a final subword of $(w^h)^N$ for some $N$.
   The argument that $\omega$ is an initial subword of some $(w^h)^N$ is exactly analogous.
   
   Thus, $\pi = \alpha w \omega$ is a subword of some $(w^h)^N$; even more than that, $\alpha$ starts with $\eta_i$ for some $i$,
   so we see that $\pi$ is an initial subword of some $w'^N$, $w' := \eta_i \eta_{i+1} \cdots \eta_{l} \eta_1 \cdots \eta_{i-1}$.
   Note that $\rho(w') = \rho(w^h)^{\rho(\eta_1 \cdots \eta_{i-1})}
   =\rho(w^{h \rho(\eta_1 \cdots \eta_{i-1})})$, and $w'$ is a subword of $\pi$,
   so by unique geodesicity $w' = w^{h \rho(\eta_1 \cdots \eta_{i-1})}$, so we have proven (1).
   
   Now to prove (2). Let $\pi$ be a unique geodesic with $\rho(\pi) \in H$ and such that no decomposition of the form
   $\pi = \alpha (w^h)^{\pm 1} \omega$ holds for any $\rho(\alpha), \rho(\omega), h \in H$.
   Then take $\pi' := w \pi w^{-1}$. We claim that 
   \[
      V(\pi') \cap V(\pi) \cap H \subset V(w) \cup \rho(w \pi) V(w^{-1}),
   \]
   which then clearly implies that $|V(\pi') \cap V(\pi) \cap H| \le 2(|w|+1)$.
   
   To prove our claim, suppose that to the contrary there were two initial subpaths $\pi_1, \pi_2$ of $\pi$ with $\rho(\pi_1), \rho(\pi_2) \in H$
   such that
   \[
      \rho(w \pi_1) = \rho(\pi_2).
   \]
   Then we have
   \[
      \rho(w \pi_1) = \rho(\pi_1 w^{\rho(\pi_1)}) = \rho(\pi_2).
   \]
   There is some subpath $\tilde{\pi}$ of $\pi$ such that $\rho(\tilde{\pi}) \in H$ and either $\pi_1 = \pi_2 \tilde{\pi}$ or 
   $\pi_2 = \pi_1 \tilde{\pi}$. In the second case, cancellation and unique geodesicity give
   \[
      \rho(w^{\rho(\pi_1)}) = \rho(\tilde{\pi}) \Rightarrow w^{\rho(\pi_1)} = \tilde{\pi}
   \]
   which contradicts our assumption on $\pi$, since $\pi = \pi_1 \tilde{\pi} \omega$ for some $\rho(\omega) \in H$
   and the above equation says that $\tilde{\pi}$ is a geodesic from 1 to $z^{\rho(\pi_1)}$.
   The first case gives $\rho(\tilde{\pi}) = \rho(w^{\rho(\tilde{\pi})})^{-1}$,
   which is similarly a contradiction. So we are done.
   
   The final inequalities are consequences of straightforward algebraic manipulations, together with the inequality
   \[
      |\pi \setminus \pi'| \ge \frac{1}{2} |V(\pi) \setminus V(\pi')|,
   \]
   which follows from the fact that we can construct a map $V(\pi) \setminus V(\pi') \to \pi \setminus \pi'$ with fibers of size at most 2
   as follows: take each $v \in V(\pi) \setminus V(\pi')$ and associate to it an edge $e \in \pi$ such that $v$ is an endpoint of $e$;
   such an edge exists since $v \in V(\pi)$ and $e \notin \pi'$ because $v \notin V(\pi')$.
\end{proof}

\nilpotentdetours
\begin{proof}
   If $G$ is a Cayley graph of a group which is not isomorphic to $\Z$ or $\Z/2 * \Z/2$, this follows from 
   Proposition \ref{prop:withcenter}, since nilpotent groups have nontrivial center. 
   If $G$ is a Cayley graph of $\Z$, this follows from Proposition \ref{prop:Zdetour}. 
   If $G$ is a Cayley graph of $\Z/2 * \Z/2$, this follows from Proposition \ref{prop:dihedraldetour}.
\end{proof}

\bibliography{strictineqbib}
\bibliographystyle{plain}
\end{document}